\theoremstyle{plain} 
\newtheorem{thm}{Theorem}[section]
\newtheorem{prop}[thm]{Proposition}
\newtheorem{lem}[thm]{Lemma}
\newtheorem{cor}[thm]{Corollary}
\theoremstyle{definition}
\newtheorem{defn}[thm]{Definition}
\newtheorem{rem}[thm]{Remark}
\newtheorem{ex}[thm]{Example}
\theoremstyle{plain}
\newenvironment{customthm}[1]
  {\innercustomthm}
  {\endinnercustomthm}
\renewcommand{\theta}{\vartheta}
\renewcommand{\phi}{\varphi}
\renewcommand{\epsilon}{\varepsilon}
\renewcommand{\subset}{\subseteq}
\renewcommand{\supset}{\supseteq}
\newcommand{\eqrot}{\rotatebox{90}{$=$}}
\newcommand{\subsetrot}{\rotatebox{90}{$\subset$}}
\newcommand{\supsetrot}{\rotatebox{90}{$\supset$}}
\newcommand{\subsetneqrot}{\rotatebox{90}{$\subsetneq$}}
\newcommand{\supsetneqrot}{\rotatebox{90}{$\supsetneq$}}
\newcommand{\N}{\mathbb N}
\newcommand{\Z}{\mathbb Z}
\newcommand{\C}{\mathbb C}
\DeclareMathOperator{\Mor}{Mor}
\DeclareMathOperator{\Rep}{Rep}
\DeclareMathOperator{\Reptil}{\widetilde{Rep}}
\DeclareMathOperator{\spanlin}{span}
\DeclareMathOperator{\Lrot}{Lrot}
\DeclareMathOperator{\Rrot}{Rrot}
\newcommand{\red}{_{\mathrm{red}}}
\newcommand{\nred}{_{N\mathchar `\-\mathrm{red}}}
\newcommand{\nlin}{_{N\mathchar `\-\mathrm{lin}}}
\newcommand{\nnlin}{_{(N+1)\mathchar `\-\mathrm{lin}}}
\newcommand{\nnnlin}{_{(N-1)\mathchar `\-\mathrm{lin}}}
\newcommand{\Lin}{\mathscr{L}}
\newcommand{\Cat}{\mathscr{C}}
\newcommand{\Kat}{\mathscr{K}}
\newcommand{\T}{\mathcal{T}}
\newcommand{\V}{\mathcal{V}}
\newcommand{\PP}{\mathcal{P}}
\newcommand{\Lart}{\mathscr{P}}
\newcommand{\tiltimes}{\mathbin{\tilde\times}}
\newcommand{\tilstar}{\mathbin{\tilde *}}
\begin{document}
\title{Intertwiner spaces of quantum group subrepresentations}
\author{Daniel Gromada}
\author{Moritz Weber}
\address{Saarland University, Fachbereich Mathematik, Postfach 151150,
66041 Saarbr\"ucken, Germany}
\email{gromada@math.uni-sb.de}
\email{weber@math.uni-sb.de}
\date{\today}
\subjclass[2010]{20G42 (Primary); 18D10, 46L65 (Secondary)}
\keywords{Compact matrix quantum group, intertwiner spaces, non-easy quantum group, tensor category}
\thanks{Both authors were supported by the collaborative research centre SFB-TRR 195 ``Symbolic Tools in Mathematics and their Application''. The second author was also supported by the ERC Advanced Grant NCDFP, held by Roland Speicher and by the DFG project ``Quantenautomorphismen von Graphen''. The article is part of the first author's PhD thesis.}
\thanks{We thank Simeng Wang and Adam Skalski for discussions on the topic.}

\begin{abstract}
We consider compact matrix quantum groups whose $N$-dimensional fundamental representation decomposes into an $(N-1)$-dimensional and a one-dimensional subrepresentation. Even if we know that the compact matrix quantum group associated to this $(N-1)$-dimensional subrepresentation is isomorphic to the given $N$-dimensional one, it is a priori not clear how the intertwiner spaces transform under this isomorphism. In the context of so-called easy and non-easy quantum groups, we are able to define a transformation of linear combinations of partitions and we explicitly describe the transformation of intertwiner spaces. As a side effect, this enables us to produce many new examples of non-easy quantum groups being isomorphic to easy quantum groups as compact quantum groups but not as compact matrix quantum groups.
\end{abstract}

\maketitle
\section*{Introduction}

Compact (matrix) quantum groups were defined by Woronowicz in \cite{Wor87} by the following consideration. For a compact group $G$, we can construct a commutative C*-algebra $A:=C(G)$ of continuous complex-valued functions over $G$. The multiplication $\mu\colon G\times G\to G$ can be described by a \emph{comultiplication} $\Delta\colon A\to A\otimes A$. The group axioms can also be dualized and formulated in terms of the commutative algebra $A$ and the comultiplication $\Delta$. Such an alternative definition of a group can be generalized by dropping the commutativity condition on $A$. The resulting structure called \emph{compact quantum group} forms a counterpart of groups in non-commutative geometry. In particular, generalizing compact matrix groups, we get \emph{compact matrix quantum groups}, which are defined by some distinguished representation called \emph{fundamental representation}.

By a Tannaka--Krein result of Woronowicz \cite{Wor88}, compact matrix quantum groups are determined by their representation theory. The intertwiner spaces of a quantum group form a monoidal $*$-category. Conversely, any monoidal $*$-category of operators determines a quantum group if we interpret those operators as intertwiners.

In 2009, Banica and Speicher \cite{BS09} found an easy way how to construct such categories. They defined a structure of monoidal involutive category on set partitions and a functor assigning to each partition~$p$ a linear operator~$T_p$. Thus, for any subcategory~$\Cat$ of partitions, one can construct a so-called \emph{easy quantum group}~$G$ with fundamental representation~$u$, an $N\times N$ matrix for which the intertwiner spaces look like
$$\Mor(u^{\otimes k},u^{\otimes l})\equiv\{T\in\Lin(\C^{Nk},\C^{Nl})\mid Tu^{\otimes k}=u^{\otimes l}T\}=\spanlin\{T_p\mid p\in\Cat(k,l)\}.$$
It holds that the intertwiner spaces for the group of permutations~$S_N$ come from the category of all set partitions. On the other hand, the smallest category we usually consider is the category of non-crossing pair partitions, which corresponds to Wang's free orthogonal quantum group~$O_N^+$ defined in \cite{Wan95free}. Thus, for any easy quantum group~$G$ we have $S_N\subset G\subset O_N^+$.

This was a groundbreaking step in the theory of compact matrix quantum groups since it brought a lot of new examples of quantum groups and made some questions much easier to decide since we are now able to work on the combinatorial level rather than on the C*-algebraical level. On the other hand, this approach cannot describe all the quantum groups. In particular, it describes only those quantum groups $S_N\subset G\subset O_N^+$, whose intertwiner spaces are spanned by the maps~$T_p$ for some partitions~$p$.

Nevertheless, since for the group $S_N$ the intertwiner spaces are described by the set of all partitions
$$\Mor(u^{\otimes k},u^{\otimes l})=\spanlin\{T_p\mid \text{$p$ is a partition of $k+l$ points}\},$$
the intertwiner spaces of any quantum group $G$ such that $S_N\subset G\subset O_N^+$ must be some subspaces
$$\Mor(u^{\otimes k},u^{\otimes l})\subset\spanlin\{T_p\mid \text{$p$ is a partition of $k+l$ points}\}.$$
In order to be able to study all compact matrix quantum groups between $S_N$ and $O_N^+$, it is enough to introduce a linear structure to the category of partitions such that we are able to describe any subspace of $\spanlin\{T_p\}$, not only those having the maps $T_p$ as generators.

Despite the fact that the classical categories of partitions were already classified \cite{RW16}, we know almost nothing about the linear categories of partitions (categories of formal linear combinations of partitions). Working with linear combinations is much more complicated than working just with partitions. For example, given a linear combination of partitions, it is a quite non-trivial problem to determine whether the category generated by this linear combination is easy (i.e.\ spanned by partitions) or not. As far as we know, this article is one of the first papers containing amount of non-easy examples of linear categories of partitions.

A wide class of non-easy examples was recently constructed in \cite{Maa18}, where the intertwiner spaces for all so-called \emph{group-theoretical} quantum groups were described. There are also several results going beyond easiness using different approaches such as generalizing the notion of category of partitions or reinterpreting the partitions. See for example \cite{Fre17,CW16,Ban18super}.

Except for bringing examples of non-easy quantum groups, the main goal of this article is to study the following phenomenon. It is known that the bistochastic group $B_N$, which is the group of $N\times N$ matrices such that the sum of every row and the sum of every column equals to one, is isomorphic to $O_{N-1}$ \cite{BS09}. In fact, the fundamental representation of $B_N$ is reducible and decomposes into a direct sum of a one-dimensional trivial representation and an $(N-1)$-dimensional representation, which is similar to the fundamental representation of $O_{N-1}$. The same was proven for the free counterpart $B_N^+$ being isomorphic to $O_{N-1}^+$ \cite{Rau12}. It is also known that the representation of $S_N$ by permutation matrices decomposes into the one-dimensional trivial representation and $(N-1)$-dimensional standard representation. Thus, the free counterpart $S_N^+$ must also admit this decomposition. Our motivating question was: What is the $(N-1)$-dimensional compact matrix quantum group that is isomorphic to $S_N^+$? What do the intertwiner spaces look like?

In fact, every quantum group $G$ such that $S_N\subset G\subset B_N^{\# +}$ has a fundamental representation that decomposes into a one-dimensional and an $(N-1)$-dimensional subrepresentation. Recall that $B_N^{\# +}$ is the easy quantum group whose associated category of partitions is generated by the double singleton $\singleton\otimes\singleton$ \cite{Web13}. The crucial part is that we choose a very special isomorphism (in fact, we have two variants here) for passing from the $N$-dimensional representation of $G$ to the $(N-1)$-dimensional one allowing us to describe also the resulting intertwiner spaces using partitions or their linear combinations. In this article, we study the quantum group $H$ generated by this $(N-1)$-dimensional representation and we present a way, how to describe it using partitions. We also mention the opposite problem of how to reconstruct the whole quantum group $G$ from the group $H$. We summarize our results in the following section. Our considerations lead to many concrete examples of non-easy quantum groups, which are also summarized in the following section.

%

\section{Main results}

\subsection{Warm up example}
\label{secc.warmup}

Recall that the bistochastic quantum group $B_N^+$ is given by its fundamental representation $u=(u_{ij})_{i,j=1}^N$ and the relations turning $u$ into an orthogonal matrix, $u=\bar u$ together with the bistochastic relations
$$u\xi=\xi,\quad u^t\xi=\xi,$$
where $\xi\in\C^N$ is the vector filled with entries all equal to one. Now, for any orthogonal matrix $U\in M_N(\C)$ mapping $U\xi=\alpha e_N$ for some $\alpha\in\C$, we have
$$UuU^t=\begin{pmatrix}v&0\\0&1\end{pmatrix}=v\oplus 1$$
	with $v\in M_{N-1}(C(B_N^{+}))$. In addition, $v$ is orthogonal, so we have just proved the isomorphism $B_N^+\simeq O_{N-1}^+$, see \cite{Rau12}. A natural question is now: To which quantum subgroup of $O_{N-1}^+$ is $S_N^+$ isomorphic? How to describe its intertwiner spaces? Supprisingly, this question is much harder for $S_N^+$ than for $B_N^+$ and it heavily depends on the choice of the unitary $U$.

To make this more precise, let $G=(C(G),u)$ be any CMQG with $S_N\subset G\subset B_N^+$. Then, as the category of $B_N^+$ is generated by the singleton $\singleton$, we have again $u\xi=\xi$ and $u^t\xi=\xi$. Hence, for any orthogonal matrix $U$ as above, we infer that $G$ is isomorphic to some quantum group $S_{N-1}\subset G^{\rm irr}\subset O_{N-1}^+$. As this is an isomorphism as compact quantum groups (rather than as compact matrix quantum groups), we have no information about the intertwiner spaces of $G^{\rm irr}$ in general -- and they may be very different from the ones of $G$, as in the example $B_N^+\simeq O_{N-1}^+$.

We can make the statement even more general and consider a CMQG $S_N\subset G\subset B_N^{\# +}$, whose fundamental representation decomposes as $UuU^*=v\oplus r$, where $r\in C(G)$ is a one-dimensional representation of $G$. Then, the quantum group $G^{\rm irr}$ determined by the subrepresentation $v$ might not be isomorphic to $G$ itself, but essentially to some quotient $G/\hat\Z_2$.

\subsection{Main theorems}

We define particular orthogonal matrices $U_{(N,+)},U_{(N,-)}\in M_N(\C)$ such that we may describe the intertwiner spaces of $G^{\rm irr}$ explicitly for any CMQG $S_N\subset G\subset B_N^{\# +}$. To be more precise, we define transformations $\V_{(N,\pm)}\colon\Lart\nlin\to\Lart\nnnlin$ of linear combinations of partitions (Definition \ref{D.VK}) and two quantum groups $G_+^{\rm irr}$ and $G_-^{\rm irr}$ (Definition \ref{D.Girr}) using orthogonal matrices $U_{(N,+)}$ and $U_{(N,-)}$ (Definition \ref{D.U}) and we prove the following theorem.

\begin{customthm}{A}[Theorem~\ref{T.VG}, Theorem \ref{T.VK}]
\label{T1}
Let $G=(C(G),u)$, $S_N\subset G\subset B_N^{\# +}$ be a CMQG corresponding to a category $\Kat\subset\Lart\nlin$, $\singleton\otimes\singleton\in\Kat$. Then $G_\pm^{\rm irr}$ satisfies $S_{N-1}\subset G^{\rm irr}_\pm\subset O_{N-1}^+$ and it corresponds to the category
$$\V_{(N,\pm)}\Kat=\{\V_{(N,\pm)}p\mid p\in\Kat\}\subset\Lart\nnnlin.$$
\end{customthm}

Note that $\singleton\otimes\singleton\not\in\V_{(N,\pm)}\Kat$, so the fundamental representation of $G_\pm^{\rm irr}$ is no longer reducible. In the case $G=B_N^+$ (alternatively $B_N'^+$ or $B_N^{\# +}$), we have $G_+^{\rm irr}=G_-^{\rm irr}=O_{N-1}^+$.

Given a compact matrix quantum group $G$, using any regular matrix $T\in M_N(\C)$, we can construct a similar matrix quantum group $\tilde G=TGT^{-1}$. Although $G$ and $\tilde G$ are similar, they are not the same and in particular they have different (although similar) intertwiner spaces. In particular, even if $S_N\subset G\subset O_N^+$, this might not hold for $\tilde G$. If it does, easiness of $G$ need not imply easiness of $\tilde G$. If we are looking for examples of non-easy quantum groups, this provides the simplest construction for obtaining such examples. It turns out that there exists one canonical choice of a similarity matrix $T\in M_N(\C)$ such that for any $G$ satisfying $S_N\subset G\subset O_N^+$, we have also $S_N\subset\tilde G\subset O_N^+$. The following theorem describes this similarity relation explicitly and also gives an explicit description for the transformation of the corresponding linear category of partitions. This in particular provides the similarity between the quantum groups $G_+^{\rm irr}$ and $G_-^{\rm irr}$.

\begin{customthm}{B}[Theorem~\ref{T.Tiso} and Proposition~\ref{P.tauirr}]
\label{T3}
Let $G=(C(G),u)$, $S_N\subset G\subset O_N^+$ be a CMQG corresponding to a category $\Kat\subset\Lart\nlin$. Let us denote $\tau_{(N)}:=\idpart-\frac{2}{N}\disconnecterpart\in\Lart\nlin(1,1)$. Then $\tilde G:=(C(G),T_{\tau_{(N)}}uT_{\tau_{(N)}}^{-1})$ is a compact matrix quantum group satisfying $S_N\subset\tilde G\subset O_N^+$ and it corresponds to the category
$$\T_{(N)}\Kat=\{\tau_{(N)}^{\otimes l}p\tau_{(N)}^{\otimes k}\mid p\in\Kat(k,l)\}_{k,l\in\N_0},$$
where $\T_{(N)}\colon\Lart\nlin\to\Lart\nlin$ mapping $p\mapsto \tau_{(N)}^{\otimes l}p\tau_{(N)}^{\otimes l}$ for $p\in\Lart\nlin(k,l)$ provides a monoidal $*$-isomorphism $\Kat\to\T_{(N)}\Kat$.

In particular, considering $G$ such that $S_N\subset G\subset B_N^{\# +}$, we have the similarity $T_{\tau_{(N-1)}}G_+^{\rm irr}T_{\tau_{(N-1)}}^{-1}=G_-^{\rm irr}$, but on the other hand $T_{\tau_{(N)}}GT_{\tau_{(N)}}^{-1}=G$.
\end{customthm}

While Theorem~\ref{T1} described the construction of $G_\pm^{\rm irr}$ for a given $G$ such that $S_N\subset G\subset B_N^{\# +}$, the main result of Section \ref{sec.projection} shows that there are three canonical ways how to construct a quantum group $\tilde G$ such that $\tilde G^{\rm irr}_\pm=G^{\rm irr}_\pm$, and provides a description of the corresponding intertwiner spaces. In the following, we use the operator $\PP_{(N)}\colon\Lart\nlin(k,l)\to\Lart\nlin(k,l)$ mapping $p\mapsto\pi^{\otimes l}p\pi^{\otimes k}$, where $\pi=\idpart-\frac{1}{N}\disconnecterpart\in\Lart\nlin(1,1)$ (see Definition \ref{D.P}).

\begin{customthm}{C}[Theorem~\ref{T.redqg}]
\label{T2}
Let $\Kat\subset\Lart\nlin$ be a linear category of partitions such that $\singleton\otimes\singleton\in\Kat$. Denote by $G$ the corresponding quantum group. Then we can construct the quantum group corresponding to the following categories
\begin{eqnarray*}
\langle\PP_{(N)}\Kat\rangle\nlin           &\qquad\hbox{corresponds to}\qquad& U_{(N,\pm)}^*(G^{\rm irr}_\pm*\hat\Z_2)U_{(N,\pm)},\\
\langle\PP_{(N)}\Kat,\Labac\rangle\nlin    &\qquad\hbox{corresponds to}\qquad& U_{(N,\pm)}^*(G^{\rm irr}_\pm\times\hat\Z_2)U_{(N,\pm)},\\
\langle\PP_{(N)}\Kat,\singleton\rangle\nlin&\qquad\hbox{corresponds to}\qquad& U_{(N,\pm)}^*(G^{\rm irr}_\pm\times E)U_{(N,\pm)}.\\
\end{eqnarray*}
\end{customthm}

\subsection{Examples of non-easy categories}

In this subsection we summarize all examples of non-easy linear categories of partitions constructed in this work. Most of them come from Section \ref{sec.examples}. We define each category by a set of generators. The categories usually depend on a number $N\in\N$, which stands for the dimension of the matrix of the corresponding compact matrix quantum group. To simplify the formulation, we do not mention the precise compact matrix quantum group corresponding to those examples, but rather the well known quantum groups that are isomorphic to those. In each example, we give a reference to the precise statement.

Those examples were actually the true motivation for this work since big part of them was constructed by performing computer experiments using the computer algebraic software \textsc{Singular} \cite{Singular}. The theory presented in this article was developed afterwards to interpret those examples.

\begin{ex}[Proposition~\ref{P.Vqg}, Example \ref{ex.V}]
Applying Theorem~\ref{T1} on the quantum groups with categories
$$NC\nnlin=\langle\Laaa\rangle\nnlin\quad\text{and}\quad NC'\nnlin=\langle\Laaaa\rangle\nnlin$$
we get the following.
\begin{eqnarray*}
\langle\V_{(N+1,\pm)}\Laaa\rangle\nlin  &\qquad \longleftrightarrow \qquad& S_{N+1}^+,\\
\langle\V_{(N+1,\pm)}\Laaaa\rangle\nlin &\qquad\longleftrightarrow\qquad& S_{N+1}^+\times\hat\Z_2,\\
\end{eqnarray*}
where
$$\V_{(N+1,\pm)}\Laaa=\Laaa-{1\over N}\left(1\pm{1\over\sqrt{N+1}}\right)(\Laab+\Laba+\Labb)+{1\over N^2}\left(2\pm{N+2\over\sqrt{N+1}}\right)\Labc,$$
\begin{align*}
\V_{(N+1,\pm)}\Laaaa&=\Laaaa-{1\over N}\left(1\pm{1\over\sqrt{N+1}}\right)(\Laaab+\Laaba+\Labaa+\Labbb)+\\&+{1\over N^2}\left(\frac{N+2}{N+1}\pm{2\over\sqrt{N+1}}\right)(\Laabc+\Labac+\Labca+\\&+\Labbc+\Labcb+\Labcc)+\frac{1}{N^3}\left(\frac{N^2-4N-8}{N+1}\mp\frac{8}{\sqrt{N+1}}\right)\Labcd.
\end{align*}
\end{ex}

\begin{ex}[Proposition~\ref{p.qg}, Example \ref{ex.P}]
Applying Theorem~\ref{T2} on the previous examples we get the following
\begin{eqnarray*}
\langle\PP_{(N)}\Laaa,\singleton\otimes\singleton\rangle\nlin  &\qquad\longleftrightarrow\qquad& S_N^+*\hat\Z_2,\\
\langle\PP_{(N)}\Laaa,\Labac\rangle\nlin  &\qquad\longleftrightarrow\qquad& S_N^+\times\hat\Z_2,
\end{eqnarray*}
where
$$\PP_{(N)}\Laaa=\Laaa-{1\over N}(\Laab+\Laba+\Labb)+{2\over N^2}\Labc.$$
And
\begin{eqnarray*}
\langle \PP_{(N)}\Laaaa,\singleton\otimes\singleton\rangle\nlin  &\qquad\longleftrightarrow\qquad& (S_N^+\times\hat\Z_2)*\hat\Z_2,\\
\langle \PP_{(N)}\Laaaa,\Labac\rangle\nlin  &\qquad\longleftrightarrow\qquad& (S_N^+\times\hat\Z_2)\times\hat\Z_2,\\
\langle \PP_{(N)}\Laaaa,\singleton\rangle\nlin  &\qquad\longleftrightarrow\qquad& S_N^+\times\hat\Z_2,
\end{eqnarray*}
where
\begin{align*}
\PP_{(N)}\Laaaa&=\Laaaa-{1\over N}(\Laaab+\Laaba+\Labaa+\Labbb)+\\&+{1\over N^2}(\Laabc+\Labac+\Labca+\Labbc+\Labcb+\Labcc)+\frac{3}{N^3}\Labcd.
\end{align*}
\end{ex}

\begin{ex}[Proposition~\ref{P.cross}]
Applying Theorem~\ref{T2} on $\langle\crosspart,\singleton\otimes\singleton\rangle\nlin$, we get
\begin{eqnarray*}
\langle\PP_{(N)}\crosspart,\singleton\otimes\singleton\rangle\nlin  &\qquad\longleftrightarrow\qquad& O_{N-1}*\hat\Z_2,
\end{eqnarray*}
where
$$\PP_{(N)}\crosspart=\crosspart-{1\over N}(\Pabcb+\Pabac)+{1\over N^2}\Pabcd.$$
\end{ex}

\begin{ex}[Proposition~\ref{P.hl}]
Applying Theorem~\ref{T2} on $\langle\halflibpart,\singleton\otimes\singleton\rangle\nlin$, we get
\begin{eqnarray*}
\langle\PP_{(N)}\halflibpart,\singleton\otimes\singleton\rangle\nlin  &\qquad\longleftrightarrow\qquad& O_{N-1}^**\hat\Z_2,\\
\langle\PP_{(N)}\halflibpart,\Labac\rangle\nlin   &\qquad\longleftrightarrow\qquad& O_{N-1}^*\times\hat\Z_2,\\
\langle\PP_{(N)}\halflibpart,\singleton\rangle\nlin  &\qquad\longleftrightarrow\qquad& O_{N-1}^*,
\end{eqnarray*}
where
$$\PP_{(N)}\halflibpart=\halflibpart-{1\over N}(\Pabcdbc+\Pabcadc+\Pabcabd)+{1\over N^2}(\Pabcdec+\Pabcdbe+\Pabcade)-\frac{1}{N^3}\Pabcdef.$$
\end{ex}

\begin{ex}[Example \ref{E.tfour}]
Applying Theorem~\ref{T3} on $\langle\fourpart\rangle\nlin$, we get
\begin{eqnarray*}
\langle\T_{(N)}\fourpart\rangle\nlin  &\qquad\longleftrightarrow\qquad& H_N^+,
\end{eqnarray*}
where
\begin{eqnarray*}
\T_{(N)}\fourpart&=&\fourpart-{2\over N}(\Laaab+\Laaba+\Labaa+\Labbb)\\&+&{4\over N^2}(\Laabc+\Labac+\Labbc+\Labca+\Labcb+\Labcc)-{16\over N^3}\Labcd.
\end{eqnarray*}
\end{ex}

\section{Quantum groups and partitions}

In this preliminary section, we recall the basic notions of compact matrix quantum groups and Tannaka--Krein duality. For a more detailed introduction, we refer to the monographs \cite{Tim08,NT13}.

\subsection{Compact matrix quantum groups}
\label{secc.qgdef}
Let $A$ be a unital C*-algebra, $u_{ij}\in A$, where $i,j=1,\dots,N$ for some $N\in\N$. Denote $u:=(u_{ij})_{i,j=1}^N\in M_N(A)$. The pair $(A,u)$ is called a \emph{compact matrix quantum group} if
\begin{enumerate}
\item the elements $u_{ij}$, $i,j=1,\dots, N$ generate $A$,
\item the matrices $u$ and $u^t=(u_{ji})$ are invertible,
\item the map $\Delta\colon A\to A\otimes_{\rm min} A$ defined as $\Delta(u_{ij}):=\sum_{k=1}^N u_{ik}\otimes u_{kj}$ extends to a unital $*$-homomorphism.
\end{enumerate}

Compact matrix quantum groups are generalizations of compact matrix groups in the following sense. For $G\subseteq M_N(\C)$ we can take the algebra of continuous functions $A:=C(G)$. This algebra is generated by the functions $u_{ij}\in C(G)$ assigning to each matrix $g\in G$ its $(i,j)$-th element $g_{ij}$. The so-called \emph{co-multiplication} $\Delta\colon C(G)\to C(G)\otimes C(G)\simeq C(G\times G)$ is connected with the matrix multiplication on $G$ by $\Delta(f)(g,h)=f(gh)$ for $f\in C(G)$ and $g,h\in G$.

Therefore, for a general compact matrix quantum group $G=(A,u)$, the algebra $A$ should be seen as an algebra of non-commutative functions defined on some underlying non-commutative compact space. For this reason, we often denote $A=C(G)$ even if $A$ is not commutative. The matrix $u$ is called the \emph{fundamental representation} of $G$.

A compact matrix quantum group $H=(C(H),v)$ is a \emph{quantum subgroup} of $G=(C(G),u)$, denoted as $H\subseteq G$, if $u$ and $v$ have the same size and there is a surjective $*$-homomorphism $\phi\colon C(G)\to C(H)$ sending $u_{ij}\mapsto v_{ij}$. We say that $G$ and $H$ are \emph{identical} if there exists such a $*$-isomorphism (i.e.\ if $G\subset H$ and $H\subset G$). We say that $G$ and $H$ are \emph{similar} if there exists a regular matrix $T$ and a $*$-isomorphism $\phi\colon C(G)\to C(H)$ mapping $u_{ij}\mapsto [TvT^{-1}]_{ij}$. In this case, we write $G=THT^{-1}$.

One of the most important examples is the quantum generalization of the orthogonal group. The orthogonal group can be described as
$$O_N=\{U\in M_N(\C)\mid U_{ij}=\bar U_{ij},UU^t=U^tU=1\}.$$
So, it can be treated also as a compact matrix quantum group $(C(O_N),u)$, where $C(O_N)$ can be described as a universal C*-algebra
$$C(O_N)=C^*(u_{ij},\;i,j=1,\dots,N\mid u_{ij}=u^*_{ij},uu^t=u^tu=1,u_{ij}u_{kl}=u_{kl}u_{ij}).$$

This algebra can be quantized by dropping the commutativity relation. This was done by Wang in \cite{Wan95free} and the resulting quantum group described by the algebra
$$C(O_N^+):=C^*(u_{ij},\;i,j=1,\dots,N\mid u_{ij}=u^*_{ij},uu^t=u^tu=1)$$
is called the \emph{free orthogonal quantum group}.

\subsection{Representations of CMQG}

As in the case of classical groups, the defining or fundamental representation is not the only representation of a given group. For a compact matrix quantum group $G=(C(G),u)$, we say that $v\in M_n(C(G))$ is a representation of $G$ if $\Delta(v_{ij})=\sum_{k}v_{ik}\otimes v_{kj}$, where $\Delta$ is the comultiplication defined in the previous subsection. The representation $v$ is called \emph{unitary} if it is unitary as a matrix, i.e.\ $\sum_k v_{ik}v_{jk}^*=\sum_k v_{ki}^*v_{kj}=\delta_{ij}$.

Let $u\in M_n(C(G))$ and $v\in M_m(C(G))$. As in the case of ordinary matrices, we can consider the direct sum $u\oplus v\in M_{n+m}(C(G))$, the tensor product $u\otimes v\in M_{nm}(C(G))$ or the complex conjugate $\bar u=(u_{ij}^*)\in M_{n}(C(G))$. It is easy to check that if $u$ and $v$ are representations of some compact quantum group $G$, then those operations define new representations of $G$.

Let $u\in M_n(C(G))$ and $v\in M_m(C(G))$ be representations of a compact quantum group $G$. A linear map $T\colon\C^n\to\C^m$ is called an \emph{intertwiner} if $Tu=vT$. The space of all such maps is denoted $\Mor(u,v)$. The representations $u$ and $v$ are called \emph{equivalent} if there exists an invertible operator $T$ intertwining those representations.

We say that a subspace $V\subset\C^n$ is \emph{invariant} with respect to a representation $v\in M_n(C(G))$ if $vP=PvP$, where $P$ is the orthogonal projection onto $V$. It holds that any unitary representation $v$ is completely reducible. That is, if $V=P\C^n$ is an invariant subspace, then $V^\perp=(I-P)\C^n$ is also invariant. From the equality $v(I-P)=(I-P)v(I-P)$, we can see that $PvP=Pv$. So, $V$ is an invariant subspace of $v$ if and only if the corresponding projection $P$ intertwines $v$ with itself $vP=Pv$.

A representation $v\in M_n(C(G))$, whose only invariant subspaces are $\{0\}$ and $\C^n$ is called \emph{irreducible}. From the complete reducibility it follows that every unitary representation is a direct sum of irreducible ones.

\subsection{Monoidal $*$-categories}
Let $R$ be a set of \emph{objects}. For every $r,s\in R$, let $\Mor(r,s)$ be a vector space of \emph{morphisms} between $r$ and $s$. Let us have an associative binary operation $\otimes\colon R\times R\to R$ and associative bilinear operations $\otimes\colon\Mor(r,s)\times\Mor(r',s')\to\Mor(r\otimes r',s\otimes s')$. Let $\cdot:\Mor(r,s)\otimes\Mor(p,r)\to\Mor(p,s)$ be another associative bilinear operation. Finally, let $*$ be an antilinear involution mapping $\Mor(r,s)\to\Mor(s,r)$. Then the tuple $(R,\{\Mor(r,s)\}_{r,s\in R},\otimes,\cdot,*)$ forms a (small strict) \emph{monoidal $*$-category} if the following additional conditions hold:
\begin{itemize}
\item For every $r\in R$, there is an identity $1_r\in\Mor(r,r)$ satisfying $1_r\cdot T_1=T_1$ and $T_2\cdot 1_r=T_2$ for every $T_1\in\Mor(p,r)$ and every $T_2\in\Mor(r,s)$.
\item There is $1\in R$ such that, for every $r\in R$, $1\otimes r=r\otimes 1=r$.
\end{itemize}

A monoidal $*$-category is called \emph{concrete} if the morphisms are realized by matrices. That is, there is a map $n\colon R\to\N_0$ such that $\Mor(r,s)\subset\Lin(\C^{n(r)},\C^{n(s)})$, the identity morphism is the identity matrix and the operations coincide with the classical operations on matrices.

Let $R_1, R_2$ be monoidal $*$-categories. A map $F\colon R_1\to R_2$ together with linear maps $F\colon\Mor(r,s)\to\Mor(F(r),F(s))$ are called a \emph{monoidal unitary functor} if they preserve the structure of a monoidal $*$-category, i.e.
\begin{itemize}
\item $F(T_1\otimes T_2)=F(T_1)\otimes F(T_2)$,
\item $F(T_2T_1)=F(T_2)F(T_1)$,
\item $F(T^*)=F(T)^*$.
\end{itemize}
We will often consider functors that act on the objects as some obvious bijection. In this case, we will call the linear maps $F\colon\Mor(r,s)\to\Mor(F(r),F(s))$ a \emph{monoidal $*$-homomorphism}. If those are also bijections (i.e.\ linear isomorphisms), we will call them \emph{monoidal $*$-isomorphism}.

Sometimes we will refer to a \emph{monoidal involutive categories}, by which we will mean a monoidal $*$-category, where we drop the linear structure of the morphism spaces.

For given two objects $r,s$ of a monoidal category, we will say that they are \emph{dual} to each other (denoted $s=\bar r$ or $r=\bar s$) if there are morphisms $T_1\in\Mor(1,r\otimes s)$ and $T_2\in\Mor(1,s\otimes r)$ such that $(T_1^*\otimes 1_r)(1_r\otimes T_2)=1_r$ and $(T_2^*\otimes 1_s)(1_s\otimes T_1)=1_s$. A monoidal category, where all objects have their dual is called a monoidal category \emph{with duals}.

\subsection{Tannaka--Krein duality} An important example of a concrete monoidal $*$-category with duals is the set of all unitary representations $\Rep G$ of a given compact matrix quantum group $G$, where the set of morphisms between two representations $u$ and $v$ is the space of intertwiners $\Mor(u,v)$. A dual of a representation $u=(u_{ij})$ is simply its complex conjugate $\bar u=(u_{ij}^*)$. (In this paper we work only with Kac type quantum groups, where $\bar u$ is indeed unitary. Otherwise it has to be unitarized.) Such a category is, in addition, \emph{complete} in the sense that it is closed under taking equivalent objects, subobjects and direct sums of objects.

For every compact matrix quantum group $G=(C(G),u)$, it was shown \cite{Wor87} that all representations are direct sums of irreducible ones and that any irreducible representation $v$ is contained as a subrepresentation in a tensor product of sufficiently many copies of the fundamental representation $u$ and its complex conjugate $\bar u$. Thus, to describe the representation theory of a given quantum group $G$, it is enough to consider the category $\Reptil G$ of representations that are made as a tensor product of copies of $u$ and $\bar u$. The complete category $\Rep G$ can be computed as the natural completion of $\Reptil G$.

One of the most important results is the Tannaka--Krein duality for compact quantum groups that was proven by Woronowicz in \cite{Wor88}. It says that conversely given a concrete monoidal $*$-category $R$ generated by some object $r$ and its complex conjugate $\bar r$ there exists a compact matrix quantum group $G$ such that $\Rep G$ is the completion of $R$.

This quantum group is determined uniquely in the sense that the Hopf $*$-algebra $\C[G]$ generated by the matrix elements $u_{ij}$ of the fundamental representation is unique. For the C*-algebra $C(G)$, one can take any C*-completion of $\C[G]$. In this work, we will always consider the \emph{full algebra}, that is, the universal C*-enveloping algebra of $\C[G]$.

\section{Linear categories of partitions}

In 2009 Banica and Speicher introduced the notion of easy quantum groups \cite{BS09}, which allows to construct examples of compact matrix quantum groups from so-called categories of partitions. In this section, we generalize this approach by introducing linear combinations of partitions and extending the usual categorical operations for them. For more information about the connection between partitions and quantum groups, see also the survey \cite{Web17}.

\subsection{Partitions}

Let $k,l\in\N_0$, by a \emph{partition} of $k$ upper and $l$ lower points we mean a partition of the set $\{1,\dots,k\}\sqcup\{1,\dots,l\}\approx\{1,\dots,k+l\}$, that is, a decomposition of the set of $k+l$ points into non-empty disjoint subsets, called \emph{blocks}. The first $k$ points are called \emph{upper} and the last $l$ points are called \emph{lower}. The set of all partitions on $k$ upper and $l$ lower points is denoted $\Lart(k,l)$. We denote the union $\Lart:=\bigcup_{k,l\in\N_0}\Lart(k,l)$. The number $\left| p\right|:=k+l$ for $p\in\Lart(k,l)$ is called the \emph{length} of $p$.

We illustrate partitions graphically by putting $k$ points in one row and $l$ points on another row below and connecting by lines those points that are grouped in one block. All lines are drawn between those two rows.

Below, we give an example of two partitions $p\in \Lart(3,4)$ and $q\in\Lart(4,4)$ defined by their graphical representation. The first set of points is decomposed into three blocks, whereas the second one is into five blocks. In addition, the first one is an example of a \emph{non-crossing} partition, i.e.\ a partition that can be drawn in a way that lines connecting different blocks do not intersect (following the rule that all lines are between the two rows of points). On the other hand, the second partition has one crossing. 

\begin{equation}
\label{eq.pq}
\vrule height 16bp depth 10bp width 0bp
p=
\BigPartition{
\Pblock 0 to 0.25:2,3
\Pblock 1 to 0.75:1,2,3
\Psingletons 0 to 0.25:1,4
\Pline (2.5,0.25) (2.5,0.75)
}
\qquad
q=
\BigPartition{
\Psingletons 0 to 0.25:1,4
\Psingletons 1 to 0.75:1,4
\Pline (2,0) (3,1)
\Pline (3,0) (2,1)
\Pline (2.75,0.25) (4,0.25)
}
\end{equation}

In our graphical notation, if two or more strings cross each other, we never assume they are connected. On the other hand, if three strings meet at one point (typically like this \Partition{\Pline (1,0.2)(1,0.9) \Pline(0.5,0.9)(1.5,0.9)}), we of course assume they are connected. Thus, a partition on two upper and two lower points, where all points are in a single block, is denoted like this $\connecterpart$, whereas the diagram $\crosspart$ stands for a partition consisting of two blocks.

A block containing a single point is called a \emph{singleton}. In particular, the partitions containing only one point are called singletons and for clarity denoted by an arrow $\singleton\in\Lart(0,1)$ and $\upsingleton\in\Lart(1,0)$.

\subsection{Linear categories of partitions}
\label{secc.cat}

Let us fix a natural number $N\in\N$. Let us denote $\Lart\nlin(k,l)$ the vector space of formal linear combination of partitions $p\in\Lart(k,l)$. That is, $\Lart\nlin(k,l)$ is a vector space, whose basis is $\Lart(k,l)$. Let us denote $\Lart\nlin:=\bigcup_{k,l}\Lart\nlin(k,l)$.

Now, we are going to define some operations on $\Lart\nlin$. First, let us define those operations just on partitions.
\begin{itemize}
\item  The \emph{tensor product} of two partitions $p\in\Lart(k,l)$ and $q\in\Lart(k',l')$ is the partition $p\otimes q\in \Lart(k+k',l+l')$ obtained by writing the graphical representations of $p$ and $q$ ``side by side''.
$$
\BigPartition{
\Pblock 0 to 0.25:2,3
\Pblock 1 to 0.75:1,2,3
\Psingletons 0 to 0.25:1,4
\Pline (2.5,0.25) (2.5,0.75)
}
\otimes
\BigPartition{
\Psingletons 0 to 0.25:1,4
\Psingletons 1 to 0.75:1,4
\Pline (2,0) (3,1)
\Pline (3,0) (2,1)
\Pline (2.75,0.25) (4,0.25)
}
=
\BigPartition{
\Pblock 0 to 0.25:2,3
\Pblock 1 to 0.75:1,2,3
\Psingletons 0 to 0.25:1,4,5,8
\Psingletons 1 to 0.75:5,8
\Pline (2.5,0.25) (2.5,0.75)
\Pline (6,0) (7,1)
\Pline (7,0) (6,1)
\Pline (6.75,0.25) (8,0.25)
}
$$

\item For $p\in\Lart(k,l)$, $q\in\Lart(l,m)$ we define their \emph{composition} $qp\in\Lart\nlin(k,m)$ by putting the graphical representation of $q$ below $p$ identifying the lower row of $p$ with the upper row of $q$. The upper row of $p$ now represents the upper row of the composition and the lower row of $q$ represents the lower row of the composition. Each extra connected component of the diagram that appears in the middle and is not connected to any of the upper or the lower points, transforms to a multiplicative factor $N$.
$$
\BigPartition{
\Psingletons 0 to 0.25:1,4
\Psingletons 1 to 0.75:1,4
\Pline (2,0) (3,1)
\Pline (3,0) (2,1)
\Pline (2.75,0.25) (4,0.25)
}
\cdot
\BigPartition{
\Pblock 0 to 0.25:2,3
\Pblock 1 to 0.75:1,2,3
\Psingletons 0 to 0.25:1,4
\Pline (2.5,0.25) (2.5,0.75)
}
=
\BigPartition{
\Pblock 0.5 to 0.75:2,3
\Pblock 1.5 to 1.25:1,2,3
\Psingletons  0.5 to  0.75:1,4
\Pline (2.5,0.75) (2.5,1.25)
\Psingletons -0.5 to -0.25:1,4
\Psingletons  0.5 to  0.25:1,4
\Pline (2,-0.5) (3,0.5)
\Pline (3,-0.5) (2,0.5)
\Pline (2.75,-0.25) (4,-0.25)
}
= N^2
\BigPartition{
\Pblock 0 to 0.25:2,3,4
\Pblock 1 to 0.75:1,2,3
\Psingletons 0 to 0.25:1
\Pline (2.5,0.25) (2.5,0.75)
}
$$

\item For $p\in\Lart(k,l)$ we define its \emph{involution} $p^*\in\Lart(l,k)$ by reversing its graphical representation with respect to the horizontal axis.
$$
\left(
\BigPartition{
\Pblock 0 to 0.25:2,3
\Pblock 1 to 0.75:1,2,3
\Psingletons 0 to 0.25:1,4
\Pline (2.5,0.25) (2.5,0.75)
}
\right)^*
=
\BigPartition{
\Pblock 1 to 0.75:2,3
\Pblock 0 to 0.25:1,2,3
\Psingletons 1 to 0.75:1,4
\Pline (2.5,0.25) (2.5,0.75)
}
$$
\end{itemize}

Now we can extend the definition of tensor product and composition on the whole vector space $\Lart\nlin$ linearly. We extend the definition of the involution antilinearly. These operations are called the \emph{category operations} on partitions.

The set of all natural numbers with zero $\N_0$ as a set of objects together with the spaces of linear combinations of partitions $\Lart\nlin(k,l)$ as sets of morphisms between $k\in\N_0$ and $l\in\N_0$ with respect to those operations form a monoidal $*$-category. All objects in the category are self-dual.

Any collection of subspaces $\Kat=\bigcup_{k,l\in\N_0}\Kat(k,l)$, $\Kat(k,l)\subset\Lart\nlin(k,l)$ containing the \emph{identity partition} $\idpart\in\Kat(1,1)$ and the \emph{pair partition} $\pairpart\in\Kat(0,2)$ and closed under the category operations is a monoidal $*$-category with duals. We call it a \emph{linear category of partitions}.

For given $p_1,\dots,p_n\in\Lart\nlin$ we denote by $\langle p_1,\dots,p_n\rangle\nlin$ the smallest linear category of partitions containing $p_1,\dots,p_n$. We say that $p_1,\dots,p_n$ \emph{generate} $\langle p_1,\dots,p_n\rangle\nlin$. Note that the pair partitions are contained in the category by definition and hence will not be explicitly listed as generators.

The category operations on partitions were first defined by Banica and Speicher in \cite{BS09}. They also defined the notion of a category of partitions. Nonetheless, their definition does not fully coincide with ours since we consider linear combinations of partitions here. For the comparison, see also Subsection \ref{secc.easy}.

Note that if we consider two partitions $p,q\in\Lart(k,k)$, where all blocks are of size two (so-called pairings), then the composition $qp$ in $\Lart\nlin$ coincides (including the factors $N$) with the multiplication in the Brauer algebra $B_k(N)$ (defined by Brauer in \cite{Bra37}). Considering only non-crossing pairings, we get the Temperley--Lieb algebra $TL_k(N)$ (originally defined in \cite{TL71}; the multiplication was interpreted as a composition of pairings in \cite{Kau87}).

\subsection{Partitions with lower points only}
\label{secc.plow}

For $p\in\Lart(k,l)$, $k>0$, its \emph{left rotation} is a partition $\Lrot p\in\Lart(k-1,l+1)$ obtained by moving the leftmost point of the upper row to the beginning of the lower row, while it still belongs to the same block as before. Similarly, for $p\in\Lart(k,l)$, $l>0$, we can define its \emph{right rotation} $\Rrot p\in\Lart(k+1,l-1)$ by moving the last point of the lower row to the end of the upper row. Both operations are obviously invertible.


For linear combinations of partitions we define those operations linearly.

\begin{lem}
Any linear category of partitions is closed under rotations (left and right including inverses).
\end{lem}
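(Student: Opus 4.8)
The plan is to show that if $\Kat$ is a linear category of partitions, then for every $p\in\Kat$ the rotations $\Lrot p$ and $\Rrot p$ again lie in $\Kat$. Since rotations are defined linearly, it suffices to treat a single partition $p\in\Lart(k,l)$ and to check that the rotated partition can be obtained from $p$ by the category operations, using only the generating morphisms guaranteed to be present in any linear category, namely the identity partition $\idpart$ and the pair partition $\pairpart$ (together with its involution $\pairpart^*\in\Lart(2,0)$, which lies in $\Kat$ because categories are closed under $*$). The key observation is that a single rotation is realized by composing with a pair partition on one side and tensoring with identities.

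First I would make the realization of $\Lrot$ explicit. Given $p\in\Lart(k,l)$ with $k>0$, the left rotation moves the leftmost upper point down to the front of the lower row while keeping its block membership. I claim that
$$
\Lrot p = (1_{k-1}\otimes p)\,(\pairpart\otimes 1_{k-1}),
$$
read as a composition in $\Lart\nlin$, where $1_m:=\idpart^{\otimes m}$ denotes the identity partition on $m$ points. The pair partition $\pairpart\in\Lart(0,2)$ creates two new lower points joined in one block; tensoring with $1_{k-1}$ and composing with $p$ caps the leftmost upper leg of $p$ against one leg of this pair, which has the graphical effect of bending that leg around to the bottom row, exactly producing $\Lrot p$. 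Because $\pairpart,\idpart\in\Kat$ and $\Kat$ is closed under tensor product and composition, the right-hand side lies in $\Kat$, hence so does $\Lrot p$. The right rotation $\Rrot p$ is handled symmetrically, using $\pairpart^*$ on the other side, or alternatively one observes that $\Rrot$ is the inverse of $\Lrot$ and follows from the same identity applied to $p^*$ together with closure under $*$.

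The main obstacle is verifying the graphical identity above — that capping with a pair partition genuinely reproduces the rotation including the correct block structure and, crucially, introduces no stray closed loops that would spuriously multiply by a factor of $N$. I would check that the connected component containing the rotated point in $(1_{k-1}\otimes p)(\pairpart\otimes 1_{k-1})$ is precisely the image of the original block of the leftmost point, with no closed component forming in the middle, so that the composition yields $\Lrot p$ with coefficient $1$ rather than $N\cdot\Lrot p$. Once this single-rotation picture is confirmed, closure under arbitrary (left, right, and inverse) rotations follows by iterating, and the linear extension is immediate since all category operations are (anti)linear. This completes the argument.
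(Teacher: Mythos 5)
Your overall strategy is sound and is in fact the same argument the paper relies on: the paper's proof is a one-line reference to \cite[Lemma 2.7]{BS09}, whose content is precisely this capping of legs by pair partitions, extended linearly. You also correctly identify the one point where the linear setting requires extra care compared to the easy setting of \cite{BS09}, namely that compositions here carry factors of $N$ for closed middle components, and that no such component arises in the capping composition.

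However, two concrete steps are wrong as written. First, your identity $\Lrot p = (1_{k-1}\otimes p)(\pairpart\otimes 1_{k-1})$ does not typecheck: $\pairpart\otimes 1_{k-1}$ has $k+1$ lower points, while $1_{k-1}\otimes p$ has $2k-1$ upper points, so the composition is undefined unless $k=2$. The correct identity is
\[
\Lrot p = (\idpart\otimes p)\,\bigl(\pairpart\otimes\idpart^{\otimes(k-1)}\bigr),
\]
where the single extra identity strand is essential: one leg of the new pair passes through it to become the first lower point of the result, while the other leg feeds into the leftmost upper leg of $p$; without that strand nothing is "bent around" to the output row. Second, the claim that $\Rrot$ is the inverse of $\Lrot$ is false: $\Lrot^{-1}$ moves the \emph{first lower} point back to the front of the upper row, whereas $\Rrot$ moves the \emph{last lower} point to the end of the upper row. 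Accordingly, applying your identity to $p^*$ and using closure under $*$ produces $\Lrot^{-1}p$, not $\Rrot p$. Your other suggestion is the right one: $\Rrot$ is realized by capping on the right from below, e.g.\ $\Rrot p = (\idpart^{\otimes(l-1)}\otimes\uppairpart)(p\otimes\idpart)$ with $\uppairpart=\pairpart^*\in\Kat(2,0)$, and the two inverse rotations have analogous realizations. With these corrections --- four separate but analogous capping identities, each checked to produce no closed middle component and hence no stray factor $N$ --- your argument is complete and agrees with the cited proof.
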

\begin{proof}
The proof for linear combination of partitions is the same as for partitions. See \cite[Lemma 2.7]{BS09}.
\end{proof}

Since any linear category of partitions $\Kat$ is closed under rotations, it means that it is completely described by its subset consisting of partitions with lower points only. When working with partitions with lower points only, it is convenient to introduce the \emph{rotation} as a map $p\mapsto Rp:=(\Lrot\circ\Rrot)p$ for $p\in\Lart\nlin(0,k)$ which takes the last point of a partition and moves it to the front.
%
%
%
%
%

\subsection{Linear maps associated to partitions} Consider again a fixed natural number $N\in\N$. Given a partition $p\in\Lart(k,l)$, we can define a linear map $T_p\colon(\C^N)^{\otimes k}\to(\C^N)^{\otimes l}$ via
\begin{equation}
T_p(e_{i_1}\otimes\cdots\otimes e_{i_k})=\sum_{j_1,\dots,j_l=1}^N\delta_p(\mathbf{i},\mathbf{j})(e_{j_1}\otimes\cdots\otimes e_{j_l}),
\end{equation}
where $\mathbf{i}=(i_1,\dots,i_k)$, $\mathbf{j}=(j_1,\dots,j_l)$ and the symbol $\delta_p(\mathbf{i},\mathbf{j})$ is defined as follows. Let us assign the $k$ points in the upper row of $p$ by the numbers $i_1,\dots,i_k$ (from left to right) and the $l$ points in the lower row by $j_1,\dots,j_l$ (again from left to right). Then $\delta(\mathbf{i},\mathbf{j})=1$ if the points belonging to the same block are assigned the same numbers. Otherwise $\delta(\mathbf{i},\mathbf{j})=0$. 

As an example, we can express $\delta_p$ and $\delta_q$, where $p$ and $q$ come from Equation \eqref{eq.pq}, using multivariate $\delta$ function as follows
$$\delta_p(\mathbf{i},\mathbf{j})=\delta_{i_1i_2i_3j_2j_3},\quad
\delta_q(\mathbf{i},\mathbf{j})=\delta_{i_2j_3j_4}\delta_{i_3j_2}.$$

We extend this definition for linear combinations of partitions linearly, i.e.\ $\delta_{\alpha p+q}=\alpha\delta_p+\delta_q$ and hence $T_{\alpha p+q}=\alpha T_p+T_q$.

Given a linear combination of partitions $p\in\Lart\nlin(k,l)$, we can interpret the map $T_p$ as an intertwiner $T_pu^{\otimes k}=u^{\otimes l}T_p$ for some compact matrix quantum group $G$. Substituting the definition of $T_p$, this implies the following relations
$$\sum_{t_1,\dots,t_k=1}^N\delta_p(\mathbf{t},\mathbf{s})u_{t_1i_1}\cdots u_{t_ki_k}=\sum_{j_1,\dots,j_l=1}^N\delta_p(\mathbf{i},\mathbf{j})u_{s_1j_1}\cdots u_{s_lj_l}$$
for every $i_1,\dots,i_k,s_1,\dots,s_l\in\{1,\dots,N\}$.

For example, considering $p=\pairpart\in\Lart(0,2)$, we have the relation $$\delta_{s_1s_2}=\sum_{j=1}^Nu_{s_1j}u_{s_2j}.$$ Thus, for any quantum group $G\subset O_N^+$, we have that $T_{\pairpart}\in\Mor(1,u\otimes u)$. Similarly, we also have $T_{\uppairpart}\in\Mor(u\otimes u,1)$ for any $G\subset O_N^+$.

\begin{prop}
The map $T_\bullet\colon p\mapsto T_p$ is a monoidal $*$-homomorphism. That is, we have the following
\begin{enumerate}
\item $T_{p\otimes q}=T_p\otimes T_q$,
\item $T_{qp}=T_qT_p$ whenever one of the sides makes sense,
\item $T_{p^*}=T_p^*$.
\end{enumerate}
\end{prop}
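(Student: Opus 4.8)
The plan is to reduce everything to the case of genuine partitions and then verify the three identities by a direct computation with the functions $\delta_p$. Since $T_\bullet$, the tensor product, and the composition are all (bi)linear by construction, while the involution on $\Lart\nlin$ and the operator adjoint are both antilinear, it suffices to prove (1)--(3) for basis elements $p\in\Lart(k,l)$ and $q\in\Lart(k',l')$ (resp.\ $q\in\Lart(l,m)$ for the composition); the general case then follows by extending (anti)linearly on both sides of each identity. Throughout I would work with matrix coefficients, reading off from the definition of $T_p$ that $(T_p)_{\mathbf{j},\mathbf{i}}=\delta_p(\mathbf{i},\mathbf{j})$, where $\mathbf{i}$ labels the $k$ input (upper) points and $\mathbf{j}$ the $l$ output (lower) points.

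Two of the identities are then immediate from the combinatorics of $\delta_p$. For the tensor product (1), the diagram $p\otimes q$ is just $p$ and $q$ placed side by side, introducing no block that connects the two, so the block structure factorizes and $\delta_{p\otimes q}(\mathbf{i}\mathbf{i}',\mathbf{j}\mathbf{j}')=\delta_p(\mathbf{i},\mathbf{j})\,\delta_q(\mathbf{i}',\mathbf{j}')$; substituting this into the defining formula and comparing with the action of $T_p\otimes T_q$ on a product basis vector gives the claim. For the involution (3), the diagram $p^*$ is $p$ with its two rows interchanged, so its block structure is the same after relabelling upper and lower points, whence $\delta_{p^*}(\mathbf{j},\mathbf{i})=\delta_p(\mathbf{i},\mathbf{j})$. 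As $\delta_p$ takes only the values $0$ and $1$, one reads $(T_p^*)_{\mathbf{i},\mathbf{j}}=\overline{(T_p)_{\mathbf{j},\mathbf{i}}}=\delta_p(\mathbf{i},\mathbf{j})=\delta_{p^*}(\mathbf{j},\mathbf{i})=(T_{p^*})_{\mathbf{i},\mathbf{j}}$, so the two operators coincide.

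The substantive point, and the step I expect to be the main obstacle, is the composition identity (2). Writing out $T_qT_p$ on a basis vector produces
$$(T_qT_p)(e_{i_1}\otimes\cdots\otimes e_{i_k})=\sum_{\mathbf{m}}\Big(\sum_{\mathbf{j}}\delta_p(\mathbf{i},\mathbf{j})\,\delta_q(\mathbf{j},\mathbf{m})\Big)(e_{m_1}\otimes\cdots\otimes e_{m_m}),$$
so the task is to show that the inner sum equals $N^{c}\,\delta_{qp}(\mathbf{i},\mathbf{m})$, where $c$ is the number of connected components of the superimposed diagram that lie entirely in the identified middle row -- exactly the loops that the definition of composition converts into factors of $N$. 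The key observation is that $\delta_p(\mathbf{i},\mathbf{j})\,\delta_q(\mathbf{j},\mathbf{m})=1$ precisely when the combined labelling of the upper, middle, and lower points is constant on each connected component of the stacked diagram. For fixed $\mathbf{i},\mathbf{m}$ compatible with $\delta_{qp}$, summing over $\mathbf{j}$ then forces the label on every component that meets an upper or lower point, while each purely-middle component ranges freely over all $N$ values of its common label; this yields exactly the factor $N^{c}$, and the surviving constraint relating $\mathbf{i}$ and $\mathbf{m}$ is by definition $\delta_{qp}(\mathbf{i},\mathbf{m})$. The care needed here is to organize the bookkeeping of connected components cleanly, separating those that touch the outer rows from the middle loops, so that the count of free middle indices matches the loop factor in the definition of $qp$; once this combinatorial correspondence is set up, the identity drops out.
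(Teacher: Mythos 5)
Your proof is correct. The paper itself offers no proof of this proposition: it is the standard Banica--Speicher verification, and the paper merely records it (with the remark that including the factors $N$ in the composition of partitions is exactly what makes $T_\bullet$ a functor, unlike in \cite{BS09}). Your argument supplies precisely that verification: the reduction to basis partitions by (anti)linearity, the factorization $\delta_{p\otimes q}=\delta_p\delta_q$, the symmetry $\delta_{p^*}(\mathbf{j},\mathbf{i})=\delta_p(\mathbf{i},\mathbf{j})$ (where the real-valuedness of $\delta_p$ disposes of the complex conjugation in the adjoint), and, for the composition, the count showing that each connected component of the stacked diagram lying entirely in the middle row contributes a free index ranging over $N$ values, while components touching the outer rows force the middle labels and reproduce $\delta_{qp}(\mathbf{i},\mathbf{m})$ -- so the inner sum equals $N^c\,\delta_{qp}(\mathbf{i},\mathbf{m})$, matching the paper's convention that each such loop becomes a factor $N$ in $qp$. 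This last bookkeeping step is the only substantive point, and you have identified and handled it correctly.
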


Note in particular the distinction with the original work \cite{BS09}, where the composition is considered without the factors $N$ and therefore the map $T_\bullet$ is not a functor. If we consider just partitions and not their linear combinations, it does not make a difference. However, in case of linear combinations of partitions, it is essential to include the factors $N$ in the definition of composition to assure that $T_\bullet$ indeed is a functor.

Note also that $T_\bullet$ is not injective. Indeed, consider for example $N=2$. Then we have
$$\delta_{\Labc}=1=\delta_{\Labb}+\delta_{\Laab}+\delta_{\Laba}-2\delta_{\Laaa},$$
so
$$T_{\Labc}=T_{\Labb}+T_{\Laab}+T_{\Laba}-2T_{\Laaa},$$
To be more precise, we can formulate the following proposition. Recall, that $T_\bullet$ depends on the number $N$. For a partition $p\in\Lart(k,l)$, denote by $b(p)$ the number of blocks in $p$.

\begin{prop}
For any $k,l\in\N_0$, the set
$$\{T_p\mid p\in\Lart(k,l);\;b(p)\le N\}$$
forms a basis of $\spanlin\{T_p\mid p\in\Lart(k,l)\}=\{T_p\mid p\in\Lart\nlin(k,l)\}$.
\end{prop}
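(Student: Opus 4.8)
The plan is to reduce the statement to a question about the combinatorial functions $\delta_p$ and then resolve it by Möbius inversion over the partition lattice. Writing $n=k+l$ and collecting the indices $(\mathbf{i},\mathbf{j})$ into a single tuple $\mathbf{x}\in\{1,\dots,N\}^n$, the matrix entries of $T_p$ are exactly the values $\delta_p(\mathbf{x})$; thus $p\mapsto T_p$ is, after this reindexing, nothing but $p\mapsto\delta_p$ viewed as a function $\{1,\dots,N\}^n\to\C$. Hence linear independence and spanning for the operators $T_p$ are equivalent to the same properties for the functions $\delta_p$, and I would work with the latter throughout. The equality $\spanlin\{T_p\mid p\in\Lart(k,l)\}=\{T_p\mid p\in\Lart\nlin(k,l)\}$ is immediate from the linearity of $T_\bullet$, so the entire content is the basis claim.

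The main tool is the refinement order on partitions of the $n$ points: write $p\le q$ when $p$ is finer than $q$ (each block of $p$ lies inside a block of $q$), so that coarsening can only decrease the number of blocks. For a coloring $\mathbf{x}$ let $\ker\mathbf{x}$ denote the partition identifying the points receiving equal colors, and set $\hat\delta_q(\mathbf{x})=1$ if $\ker\mathbf{x}=q$ and $0$ otherwise. Since $\delta_p(\mathbf{x})=1$ exactly when $p\le\ker\mathbf{x}$, I obtain the triangular relation $\delta_p=\sum_{q\ge p}\hat\delta_q$. Two observations drive the argument: the functions $\hat\delta_q$ have pairwise disjoint supports (a coloring determines its kernel uniquely), and $\hat\delta_q$ is the zero function precisely when $b(q)>N$, because realizing $q$ as a kernel requires $b(q)$ distinct colors out of the available $N$. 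Consequently $\{\hat\delta_q\mid b(q)\le N\}$ is a linearly independent family whose span contains every $\delta_p$, and by Möbius inversion $\hat\delta_q=\sum_{r\ge q}\mu(q,r)\delta_r$, so each $\hat\delta_q$ lies back in $\spanlin\{\delta_p\}$. Hence $\{\hat\delta_q\mid b(q)\le N\}$ is a basis of $W:=\spanlin\{\delta_p\mid p\in\Lart(k,l)\}$, and $\dim W=\#\{q\mid b(q)\le N\}$.

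It then remains to see that $\{\delta_p\mid b(p)\le N\}$ is also a basis of $W$. The key structural point is that the index set $U:=\{p\mid b(p)\le N\}$ is upward closed: if $b(p)\le N$ and $q\ge p$, then $b(q)\le b(p)\le N$. Restricting the relation $\delta_p=\sum_{q\ge p}\hat\delta_q$ to $p\in U$, every $q\ge p$ again lies in $U$, so the transition matrix from $(\hat\delta_q)_{q\in U}$ to $(\delta_p)_{p\in U}$ is unitriangular with respect to any linear extension of the refinement order, hence invertible. Therefore $\{\delta_p\mid p\in U\}$ spans the same space $W$ as $\{\hat\delta_q\mid q\in U\}$ and has the same cardinality $\dim W$, which forces it to be a basis; transporting back through the reindexing $\delta_p\mapsto T_p$ yields the proposition.

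The step I expect to require the most care is the bookkeeping of the two directions at the block-count threshold $N$: for spanning I must use that the excess functions $\hat\delta_q$ with $b(q)>N$ genuinely vanish, so that restricting to $U$ loses nothing, while for independence I must use that $U$ is an up-set, so that the triangular system stays square and invertible. Verifying the Möbius relation and the unitriangularity is then routine once the refinement order and the decomposition $\delta_p=\sum_{q\ge p}\hat\delta_q$ are in place.
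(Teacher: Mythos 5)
Your proof is correct and takes essentially the same route as the paper: your functions $\hat\delta_q$ (indicators that a coloring has kernel exactly $q$) are precisely the maps $\hat T_q$ of Maassen that the paper cites from \cite{Maa18}, and your unitriangular change of basis over the upward-closed set $\{p\mid b(p)\le N\}$ via $\delta_p=\sum_{q\ge p}\hat\delta_q$ is the same ``regular transformation'' argument the paper invokes. The only difference is that you prove the ingredients directly (vanishing of $\hat\delta_q$ for $b(q)>N$, independence via disjoint supports, the triangular relation, plus an optional M\"obius inversion), so your version is self-contained where the paper relies on citation.
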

\begin{proof}
We will prove the lemma using yet an alternative basis of $\spanlin\{T_p\}_{p\in\Lart(k,l)}$. In \cite[Definition 3.1]{Maa18}, Maassen introduced alternative maps $\hat T_p\colon\C^{Nk}\to\C^{Nl}$ associated to partitions $p\in\Lart(k,l)$. As remarked in the article, it is clear from their definition that $\hat T_p=0$ if $b(p)>N$. In addition, it is proven \cite[Lemma 3.4]{Maa18} that the rest, i.e.\ the set $\{\hat T_p\mid p\in\Lart(k,l);\;b(p)\le N\}$, is linearly independent.

In addition, the following relationship between $T_p$ and $\hat T_p$ can be formulated \cite[Lemma 4.21]{Maa18}
$$T_p=\sum_{q\ge p}\hat T_q,$$
where we write $q\ge p$ if $q$ was obtained by joining some blocks in $p$. This proves that $\spanlin\{T_p\}_{p\in\Lart(k,l)}=\spanlin\{\hat T_p\}_{p\in\Lart(k,l)}$ and also that the set $\{T_p\mid p\in\Lart(k,l);\;b(p)\le N\}$ is a basis since it is obtained by a regular transformation of the basis $\{\hat T_p\mid p\in\Lart(k,l);\;b(p)\le N\}$.
\end{proof}

\begin{cor}
\label{C.Tiso}
Let $k,l\in\N_0$ such that $k+l\le N$. Then $T_\bullet$ restricted to $\Lart(k,l)$ is injective.
\end{cor}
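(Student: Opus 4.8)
The plan is to read this off directly from the preceding Proposition, which identifies $\{T_p\mid p\in\Lart(k,l),\ b(p)\le N\}$ as a basis of $\spanlin\{T_p\mid p\in\Lart(k,l)\}$. The only additional input required is an elementary combinatorial bound on the number of blocks, so the whole argument reduces to checking that the constraint $b(p)\le N$ becomes vacuous in the regime $k+l\le N$.

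First I would note that for any partition $p\in\Lart(k,l)$ the number of blocks satisfies $b(p)\le k+l$: the blocks are non-empty and disjoint and together they cover the set of $k+l$ points, so there can be at most $k+l$ of them. Under the hypothesis $k+l\le N$ this yields $b(p)\le N$ for \emph{every} $p\in\Lart(k,l)$, with no exceptions. Consequently the side condition appearing in the Proposition selects all of $\Lart(k,l)$, and the family $\{T_p\mid p\in\Lart(k,l)\}$ is itself a basis of its span.

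From linear independence of a basis the conclusion is immediate: distinct partitions $p\ne q$ in $\Lart(k,l)$ give $T_p\ne T_q$ (indeed $T_p-T_q\ne 0$), which is exactly the injectivity of the restriction of $T_\bullet$ to $\Lart(k,l)$. I expect no genuine obstacle here, since all the substantive content — the identity $T_p=\sum_{q\ge p}\hat T_q$ together with the linear independence of the Maassen maps $\hat T_q$ with $b(q)\le N$ — has already been absorbed into the Proposition. The corollary merely isolates the range $k+l\le N$ in which the bound on $b(p)$ is automatic and hence no cancellation among the operators $T_p$ can occur.
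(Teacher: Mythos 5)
Your proposal is correct and is essentially identical to the paper's own proof: both observe that the number of blocks is bounded by the number of points $k+l\le N$, so the side condition $b(p)\le N$ in the preceding Proposition is vacuous, and linear independence of $\{T_p\mid p\in\Lart(k,l)\}$ gives injectivity. No gap; your write-up just spells out the same two steps in slightly more detail.
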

\begin{proof}
The number of blocks in a given partition is always lower or equal to the number of points. Since we assume that the number of points $k+l\le N$, it follows from the previous proposition that the operators $\{T_p\mid p\in\Lart(k,l)\}$ are linearly independent.
\end{proof}

\subsection{Quantum groups associated to linear categories}
\begin{thm}[{\cite{BS09}}]
Let us denote by $u$ the fundamental representation of the group $S_N$. Then we have
$$\Mor(u^{\otimes k},u^{\otimes l})=\spanlin\{T_p\mid p\in\Lart(k,l)\}=\{T_p\mid p\in\Lart\nlin(k,l)\}.$$
\end{thm}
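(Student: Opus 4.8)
The plan is to identify $\Mor(u^{\otimes k},u^{\otimes l})$ concretely as a space of $S_N$-invariant matrices and then match its natural basis of orbit indicators with the maps $\hat T_p$ already used in the proof of the previous proposition. Recall that for $S_N$ the fundamental representation $u$ is the permutation representation: the coordinate function $u_{ij}$ sends $\sigma\mapsto\delta_{i,\sigma(j)}$, so $S_N$ acts on $\C^N$ by $\sigma\cdot e_i=e_{\sigma(i)}$ and on $(\C^N)^{\otimes k}$ diagonally, $\sigma\cdot(e_{i_1}\otimes\cdots\otimes e_{i_k})=e_{\sigma(i_1)}\otimes\cdots\otimes e_{\sigma(i_k)}$. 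Writing a linear map $T\colon(\C^N)^{\otimes k}\to(\C^N)^{\otimes l}$ via its matrix entries $T_{\mathbf j,\mathbf i}$ (the coefficient of $e_{j_1}\otimes\cdots\otimes e_{j_l}$ in $T(e_{i_1}\otimes\cdots\otimes e_{i_k})$), I would evaluate the intertwining relation $Tu^{\otimes k}=u^{\otimes l}T$ at each $\sigma\in S_N$, turning it into the condition $T_{\sigma\mathbf j,\sigma\mathbf i}=T_{\mathbf j,\mathbf i}$, where $\sigma\mathbf i:=(\sigma(i_1),\dots,\sigma(i_k))$. Hence $\Mor(u^{\otimes k},u^{\otimes l})$ is exactly the space of matrices whose entries are constant along the orbits of the diagonal $S_N$-action on the index set $\{1,\dots,N\}^{k}\times\{1,\dots,N\}^{l}\approx\{1,\dots,N\}^{k+l}$.

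This space has the evident basis given by the indicator functions of the $S_N$-orbits, so the next step is to enumerate those orbits. Two index tuples lie in the same orbit precisely when they share the same equality pattern, i.e.\ when the positions taking equal values determine the same partition of the $k+l$ points; moreover such a pattern is attained by some tuple in $\{1,\dots,N\}^{k+l}$ if and only if the corresponding partition $p$ uses at most $N$ distinct values, that is $b(p)\le N$. Thus the orbits are in bijection with the partitions $p\in\Lart(k,l)$ with $b(p)\le N$, and the orbit indicator attached to $p$ is precisely the map $\hat T_p$ of \cite[Definition 3.1]{Maa18} used before (its entry equals $1$ exactly on tuples whose equality pattern is $p$, and $0$ otherwise). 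Consequently $\{\hat T_p\mid p\in\Lart(k,l),\ b(p)\le N\}$ is a basis of $\Mor(u^{\otimes k},u^{\otimes l})$.

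It then remains to pass from the $\hat T_p$ to the $T_p$. By the relation $T_p=\sum_{q\ge p}\hat T_q$ recalled in the previous proof, the families $\{T_p\}$ and $\{\hat T_q\}$ have the same span, and the previous proposition already records that $\{T_p\mid b(p)\le N\}$ is a basis of $\spanlin\{T_p\mid p\in\Lart(k,l)\}=\{T_p\mid p\in\Lart\nlin(k,l)\}$. Combining these yields $\Mor(u^{\otimes k},u^{\otimes l})=\spanlin\{\hat T_p\mid b(p)\le N\}=\spanlin\{T_p\mid p\in\Lart(k,l)\}=\{T_p\mid p\in\Lart\nlin(k,l)\}$. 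For the inclusion $\spanlin\{T_p\}\subset\Mor$ one may alternatively observe directly that each $\delta_p$ is manifestly constant on orbits, so every $T_p$ is an intertwiner; this is the easy half. The genuine content — and the step I expect to be the main obstacle — is the orbit analysis of the second paragraph: one must argue that orbit-invariance exhausts \emph{all} intertwiners and carefully handle the constraint $b(p)\le N$, which is exactly what makes certain partitions contribute no new orbit and is responsible for the non-injectivity of $T_\bullet$ noted before the statement.
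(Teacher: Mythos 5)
Your proof is correct, but there is nothing in the paper to compare it against: the theorem is stated with a citation to \cite{BS09} and no proof is given, so your proposal supplies what the paper outsources. Your route is the classical invariant-theory argument, and it meshes well with the paper's own machinery: since $C(S_N)$ is commutative, the relation $Tu^{\otimes k}=u^{\otimes l}T$ may legitimately be evaluated at each permutation $\sigma$, yielding $T_{\sigma\mathbf{j},\sigma\mathbf{i}}=T_{\mathbf{j},\mathbf{i}}$, so that $\Mor(u^{\otimes k},u^{\otimes l})$ is exactly the space of matrices constant on the orbits of the diagonal $S_N$-action on $\{1,\dots,N\}^{k+l}$; orbits correspond to equality patterns, i.e.\ to partitions $p\in\Lart(k,l)$ with $b(p)\le N$; and the orbit indicators coincide with Maassen's maps $\hat T_p$ from the proposition preceding the theorem (they satisfy $T_p=\sum_{q\ge p}\hat T_q$, which by M\"obius inversion on the coarsening order pins them down), so the span equality $\spanlin\{\hat T_p\}=\spanlin\{T_p\}$ already established there finishes the argument, the final identification $\spanlin\{T_p\mid p\in\Lart(k,l)\}=\{T_p\mid p\in\Lart\nlin(k,l)\}$ being immediate from linearity of $p\mapsto T_p$. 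Each step is sound: the pointwise evaluation is valid precisely because $S_N$ is a classical group, the constraint $b(p)\le N$ is handled correctly (patterns with more than $N$ blocks are realized by no tuple, matching $\hat T_p=0$ in that case), and the passage between the $\hat T_p$ and $T_p$ families is exactly what the paper imported from \cite{Maa18}. What your approach buys over the bare citation is self-containedness — the $S_N$ theorem becomes a corollary of the same $\hat T_p$ formalism used for the basis proposition — and it also makes transparent why $T_\bullet$ fails to be injective for small $N$: the orbits that would be indexed by partitions with $b(p)>N$ simply do not exist.
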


Using this theorem together with the Tannaka--Krein duality, we get the following corollary.

\begin{cor}
\label{C.Tannaka}
For any compact matrix quantum group $G=(C(G),u)$ such that $S_N\subset G\subset O_N^+$, there is a linear category of partitions $\Kat$ such that $\Reptil G$ is the image of $\Kat$ by the homomorphism $T_\bullet$. That is,
	$$\Mor(u^{\otimes k},u^{\otimes l})=\{T_p\mid p\in\Kat(k,l)\}.$$
Conversely, for any linear category of partitions $\Kat$ there is a quantum group $G$, $S_N\subset G\subset O_N^+$, whose representation theory corresponds to $\Kat$ in this way.
\end{cor}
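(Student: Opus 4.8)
The plan is to read this off as a direct translation between the representation category of $G$ and a category of partitions, using the preceding Banica--Speicher theorem at the two ends of the interval $S_N\subset G\subset O_N^+$ and Woronowicz's Tannaka--Krein duality for the converse. The two facts I would lean on are: first, that the quoted theorem identifies $\Mor(u^{\otimes k},u^{\otimes l})$ for $S_N$ with the \emph{entire} image $\{T_p\mid p\in\Lart\nlin(k,l)\}$ of $T_\bullet$; and second, that $H\subset G$ corresponds to the reverse inclusion of intertwiner spaces (a surjection $C(G)\to C(H)$ turns any intertwiner for $G$ into one for $H$). Thus $S_N\subset G\subset O_N^+$ is equivalent to $\Mor_{O_N^+}\subset\Mor_G\subset\Mor_{S_N}=\{T_p\mid p\in\Lart\nlin\}$.

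For the forward direction I would \emph{define} $\Kat(k,l):=T_\bullet^{-1}\bigl(\Mor(u^{\otimes k},u^{\otimes l})\bigr)$, the full preimage under the linear map $T_\bullet$. Because $S_N\subset G$ gives $\Mor_G\subset\{T_p\mid p\in\Lart\nlin\}=\operatorname{im}T_\bullet$, taking images back recovers $\{T_p\mid p\in\Kat(k,l)\}=\Mor(u^{\otimes k},u^{\otimes l})$ exactly, which is the asserted equality. It then remains to check that $\Kat$ is a linear category of partitions. Each $\Kat(k,l)$ is a subspace since it is the preimage of a subspace under a linear map; it contains $\idpart$ because $T_{\idpart}=\mathrm{id}$ always intertwines, and it contains $\pairpart$ because $G\subset O_N^+$ forces $T_{\pairpart}\in\Mor(1,u\otimes u)$. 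Closure under the category operations follows from the Proposition that $T_\bullet$ is a monoidal $*$-homomorphism together with the fact that $\Reptil G$ is a concrete monoidal $*$-category: for instance $T_{qp}=T_qT_p\in\Mor_G$ whenever $p,q\in\Kat$, so $qp\in\Kat$, and analogously for $\otimes$ and $*$.

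For the converse, given a linear category $\Kat$ I would show that its image $\{T_p\mid p\in\Kat(k,l)\}$ is a concrete monoidal $*$-category with duals, on the objects $(\C^N)^{\otimes k}$ obtained from $\N_0$ by the obvious bijection. Functoriality of $T_\bullet$ transports the category operations and closure from $\Kat$ to the image; the object $\C^N$ is self-dual, with duality morphisms $T_{\pairpart}$ and $T_{\uppairpart}=T_{\pairpart}^*$ present since every linear category contains $\pairpart$. Woronowicz's Tannaka--Krein theorem (as quoted) then yields a compact matrix quantum group $G$ with $\Mor(u^{\otimes k},u^{\otimes l})=\{T_p\mid p\in\Kat(k,l)\}$. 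Finally $S_N\subset G$ holds because $\{T_p\mid p\in\Kat\}\subset\{T_p\mid p\in\Lart\nlin\}=\Mor_{S_N}$, while $G\subset O_N^+$ holds because $\pairpart,\uppairpart\in\Kat$ make the orthogonality maps intertwiners, i.e.\ $\Mor_{O_N^+}\subset\Mor_G$.

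The step I expect to require the most care is the well-definedness and closure of $\Kat$ in the forward direction, precisely because $T_\bullet$ is \emph{not} injective: one cannot pick partition representatives but must work with the whole preimage, and closure under composition genuinely relies on the factor-$N$ convention that makes $T_\bullet$ a functor ($T_{qp}=T_qT_p$), rather than the original Banica--Speicher composition. In the converse the analogous delicate point is verifying that the image is a bona fide concrete monoidal $*$-category with duals before one is entitled to invoke Tannaka--Krein; both of these reduce to bookkeeping once the functoriality Proposition is in hand.
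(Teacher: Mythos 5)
Your proposal is correct and follows essentially the same route the paper intends: the paper derives Corollary~\ref{C.Tannaka} directly from the preceding theorem on $S_N$ combined with Woronowicz's Tannaka--Krein duality, leaving the details implicit, and your argument fills in exactly those details (defining $\Kat$ as the full preimage $T_\bullet^{-1}(\Mor_G)$, checking the category axioms via the functoriality of $T_\bullet$, and verifying the hypotheses of Tannaka--Krein for the converse). Your closing remark correctly identifies the key technical point, namely that non-injectivity of $T_\bullet$ forces one to work with full preimages and that closure under composition depends on the factor-$N$ convention making $T_\bullet$ a monoidal $*$-homomorphism.
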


We can express the compact matrix quantum group $G$ corresponding to a linear category of partitions $\Kat$ very concretely as
$$C(G)=C^*(u_{ij},\;i,j=1,\dots,N\mid u=\bar u,\; T_pu^{\otimes k}=u^{\otimes l}T_p\;\forall p\in\Kat(k,l),\;k,l\in\N_0).$$
Suppose that a set $K$ generates $\Kat$. Then, thanks to the functorial property of $T$, we have
$$C(G)=C^*\left(u_{ij},\;i,j=1,\dots,N\mathbin\bigg|\begin{matrix} u=\bar u,\; uu^t=u^tu=1_{\C^N},\\ T_pu^{\otimes k}=u^{\otimes l}T_p\;\forall p\in K(k,l),\;k,l\in\N_0\end{matrix}\right).$$

\subsection{Easy categories}
\label{secc.easy}
A linear category of partitions $\Kat$ is called \emph{easy} if it spanned by partitions (not linear combinations of partitions). That is, if there exists a collection of sets $\Cat(k,l)\subset\Lart(k,l)$ such that $\Kat(k,l)=\spanlin\Cat(k,l)$. Note that in this case the set $\Cat=\bigcup_{k,l\in\N_0}\Cat(k,l)$ is closed under the category operations if we ignore the scalar factors in the composition rule, which are in this case unimportant. Thus, $\Cat$ forms a monoidal involutive category. Such a category is called a \emph{category of partitions} according to the original definition of Banica and Speicher from \cite{BS09}. The corresponding quantum groups $G$ with $S_N\subset G\subset O_N^+$ in the sense of Corollary~\ref{C.Tannaka} are called \emph{easy}; otherwise they are \emph{non-easy}. Note also that since we can ignore the scalar factors in the definition of the composition, easy categories do not depend on the number $N\in\N$ corresponding to the size of the matrux $u$ of the quantum group, while general linear categories of partitions might do.

The easy categories of partitions are classified \cite{RW16} and this classification provides us important examples of linear categories of partitions, so let us briefly mention some results of this classification.

One of the interesting kind of categories of partitions are the non-crossing ones, where all elements are non-crossing partitions. There are exactly the following seven non-crossing easy categories of partitions \cite{Web13}.
$$\langle\rangle\nlin\subset\Big\{\begin{matrix}\langle\singleton\otimes\singleton\rangle\nlin\subset\langle\Labac\rangle\nlin\subset\langle\singleton\rangle\nlin\\\langle\fourpart\rangle\nlin\subset\langle\fourpart,\singleton\otimes\singleton\rangle\nlin\end{matrix}\Big\}\subset\langle\fourpart,\singleton\rangle\nlin$$
We denote the corresponding quantum groups as follows.
$$O_N^+\supset\Big\{\begin{matrix}B_N^{\#+}\supset B_N'^+\supset B_N^+\\H_N^+\supset S_N'^+\end{matrix}\Big\}\supset S_N^+$$
Adding the crossing partition $\crosspart$, which corresponds to the commutativity relation $u_{ij}u_{kl}=u_{kl}u_{ij}$, to those categories, we obtain six categories (only six because $\langle\singleton\otimes\singleton,\crosspart\rangle\nlin=\langle\Labac,\crosspart\rangle\nlin$) corresponding to groups
$$O_N\supset\Big\{\begin{matrix}B_N\times\Z_2\supset B_N\\H_N\supset S_N\times\Z_2\end{matrix}\Big\}\supset S_N.$$
Here, $O_N$ denotes the orthogonal group, $B_N$ is the bistochastic group, $H_N$ stands for the hyperoctahedral group and $S_N$ is the symmetric group. This also motivates the notation for the \emph{free} quantum groups corresponding to the non-crossing categories.

\subsection{The role of the double singleton $\singleton\otimes\singleton$}
In this article, we are interested particularly in the categories containing $\singleton\otimes\singleton$, so let us comment a bit on those.

The partition $\singleton\otimes\singleton$ is a rotation of $\disconnecterpart$, which corresponds to the relation $\sum_k u_{ik}=\sum_k u_{kj}$ for any $i,j$. That is, sums of all rows and all columns are equal. Let us denote those sums $r:=\sum_k u_{ik}$. Using orthogonality of $u$, we can, in addition, derive that $r^2=1$, which is actually the relation corresponding to $\singleton\otimes\singleton$.

The relation corresponding to $\Pabac$, which is a rotation of $\Labac$, can be written as $ru_{ij}=u_{ij}r$, i.e.\ $r$ commutes with everything. The relation corresponding to the singleton $\singleton$ says that $r=1$.

\subsection{Products of CMQGs}

\begin{prop}[\cite{Wan95tensor}]
Let $G=(C(G),u)$ and $H=(C(H),v)$ be compact matrix quantum groups. Then $G\times H:=(C(G)\otimes_{\rm max}C(H),u\oplus v)$ is a compact matrix quantum group. For the co-multiplication we have that
$$\Delta_\times(u_{ij}\otimes 1)=\Delta_G(u_{ij}),\quad\Delta_\times(1\otimes v_{kl})=\Delta_H(v_{kl}).$$
\end{prop}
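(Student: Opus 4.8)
The plan is to verify directly the three defining axioms of a compact matrix quantum group for the pair $(A,w)$, where $A:=C(G)\otimes_{\rm max}C(H)$ and $w:=u\oplus v$, and then to read off the comultiplication formulas. Write $\lambda\colon C(G)\to A$, $a\mapsto a\otimes 1$ and $\rho\colon C(H)\to A$, $b\mapsto 1\otimes b$ for the two canonical unital $*$-homomorphisms, so that (if $u$ is $N\times N$ and $v$ is $M\times M$) the nonzero entries of $w$ are $w_{ij}=\lambda(u_{ij})$ for $i,j\le N$ and $w_{N+k,N+l}=\rho(v_{kl})$, the remaining off-diagonal-block entries being zero. The first axiom is immediate: since $\{u_{ij}\}$ generate $C(G)$ and $\{v_{kl}\}$ generate $C(H)$, the ranges $\lambda(C(G))$ and $\rho(C(H))$ together generate $A$, and these are exactly, up to the irrelevant zero entries, the entries of $w$. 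For the second axiom, $w=u\oplus v$ is block diagonal, so $\lambda(u^{-1})\oplus\rho(v^{-1})$ is a two-sided inverse, and analogously $w^t=u^t\oplus v^t$ is invertible because $u^t$ and $v^t$ are.

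The heart of the matter is the third axiom: constructing a unital $*$-homomorphism $\Delta_\times\colon A\to A\otimes_{\rm min}A$ with $\Delta_\times(w_{ij})=\sum_m w_{im}\otimes w_{mj}$. The idea is to assemble it from $\Delta_G$ and $\Delta_H$ via the universal property of the maximal tensor product. First I would introduce two auxiliary unital $*$-homomorphisms into $A\otimes_{\rm min}A$, namely
$$\Psi_G:=(\lambda\otimes_{\rm min}\lambda)\circ\Delta_G\colon C(G)\to A\otimes_{\rm min}A,\qquad\Psi_H:=(\rho\otimes_{\rm min}\rho)\circ\Delta_H\colon C(H)\to A\otimes_{\rm min}A,$$
using functoriality of the minimal tensor product to form $\lambda\otimes_{\rm min}\lambda$ and $\rho\otimes_{\rm min}\rho$.

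The key observation is that the ranges of $\Psi_G$ and $\Psi_H$ commute: $\Psi_G$ takes values in $\lambda(C(G))\otimes_{\rm min}\lambda(C(G))$ and $\Psi_H$ in $\rho(C(H))\otimes_{\rm min}\rho(C(H))$, and since $A$ is the \emph{maximal} tensor product the subalgebras $\lambda(C(G))$ and $\rho(C(H))$ commute inside $A$; hence so do the two tensor legs. By the universal property of $\otimes_{\rm max}$ there is then a unique unital $*$-homomorphism $\Delta_\times\colon A\to A\otimes_{\rm min}A$ with $\Delta_\times\circ\lambda=\Psi_G$ and $\Delta_\times\circ\rho=\Psi_H$. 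I expect this commuting-ranges check, the one place where the choice of the maximal tensor product for $A$ is genuinely used, to be the only subtle point; everything else is bookkeeping.

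It then remains to check the required formula on the matrix entries of $w$ and to identify the restrictions with $\Delta_G$ and $\Delta_H$. For $i,j\le N$ one computes $\Delta_\times(w_{ij})=\Psi_G(u_{ij})=(\lambda\otimes_{\rm min}\lambda)\bigl(\sum_m u_{im}\otimes u_{mj}\bigr)=\sum_{m\le N}w_{im}\otimes w_{mj}$, where the terms with $m>N$ may be added for free since $w_{im}=0$ there; the analogous computation with $\rho$ handles indices above $N$, while the mixed blocks give $0=0$. Under the canonical inclusions $C(G)\otimes_{\rm min}C(G)\cong\lambda(C(G))\otimes_{\rm min}\lambda(C(G))\subset A\otimes_{\rm min}A$ and likewise for $H$, these are exactly the asserted identities $\Delta_\times(u_{ij}\otimes 1)=\Delta_G(u_{ij})$ and $\Delta_\times(1\otimes v_{kl})=\Delta_H(v_{kl})$. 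If coassociativity is wanted in addition, it descends from that of $\Delta_G$ and $\Delta_H$ by applying the same universal-property argument to both $(\Delta_\times\otimes\mathrm{id})\circ\Delta_\times$ and $(\mathrm{id}\otimes\Delta_\times)\circ\Delta_\times$ and comparing them on the generating subalgebras $\lambda(C(G))$ and $\rho(C(H))$.
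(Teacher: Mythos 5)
Your proof is correct. Note that the paper gives no proof of this proposition at all --- it is quoted from Wang's work \cite{Wan95tensor} --- so there is nothing internal to compare against; your argument, assembling $\Delta_\times$ from the two commuting $*$-homomorphisms $\Psi_G$ and $\Psi_H$ via the universal property of the maximal tensor product, is precisely the standard (and essentially Wang's original) one. You also correctly isolate the only delicate point: the commutation of the ranges of $\Psi_G$ and $\Psi_H$ inside $A\otimes_{\rm min}A$, which is exactly where the choice of $\otimes_{\rm max}$ for $C(G\times H)$ is genuinely used, while the entrywise verification (including the vanishing off-diagonal blocks) is routine and handled correctly.
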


The algebra $C(G)\otimes_{\rm max}C(H)$ can be described as a universal C*-algebra generated by elements $u_{ij}$ and $v_{kl}$ such that every $u_{ij}$ commutes with every $v_{kl}$, the elements $u_{ij}$ satisfy the same relations as $u_{ij}\in C(G)$ and the elements $v_{kl}$ satisfy the same relations as $v_{kl}\in C(H)$. Thus, the matrix 
$$u\oplus v=\begin{pmatrix}u&0\\ 0&v\end{pmatrix}\in M_{N_1+N_2}(C(G)\otimes_{\rm max}C(H))$$
indeed consists of generators of the algebra $C(G)\otimes_{\rm max}C(H)$. From now on, we will use such interpretation of the maximal tensor product and we will write just $u_{ij}v_{kl}$ instead of $u_{ij}\otimes v_{kl}$ without the explicit tensor sign.

A similar construction can be defined using the free product.

\begin{prop}[\cite{Wan95free}]
Let $G=(C(G),u)$ and $H=(C(H),v)$ be compact matrix quantum groups. Then $G*H:=(C(G)*_\C C(H),u\oplus v)$ is a compact matrix quantum group. For the co-multiplication we have that
$$\Delta_*(u_{ij})=\Delta_G(u_{ij}),\quad\Delta_*(v_{kl})=\Delta_H(v_{kl}).$$
\end{prop}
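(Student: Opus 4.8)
The plan is to verify the three defining conditions of a compact matrix quantum group for the pair $(C(G)*_\C C(H),u\oplus v)$, the essential work lying in the construction of the comultiplication. Write $N_1,N_2$ for the sizes of $u,v$ and set $w:=u\oplus v$, so that, identifying $C(G)$ and $C(H)$ with their images under the canonical inclusions $\iota_G\colon C(G)\to C(G)*_\C C(H)$ and $\iota_H\colon C(H)\to C(G)*_\C C(H)$, we have $w_{ij}=u_{ij}$ for $i,j\le N_1$, $w_{ij}=v_{(i-N_1)(j-N_1)}$ for $i,j>N_1$, and $w_{ij}=0$ otherwise.

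Conditions (1) and (2) are immediate from this block structure. The nonzero entries of $w$ are exactly the $u_{ij}$ and $v_{kl}$, which generate $C(G)$ and $C(H)$ respectively, and the free product is generated by the images of its two factors; this gives (1). For (2), the block-diagonal matrix $w$ is invertible with inverse $u^{-1}\oplus v^{-1}$, which exists since $u$ and $v$ are invertible, and the identical argument applies to $w^t=u^t\oplus v^t$.

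For condition (3), I would first form the two $*$-homomorphisms $(\iota_G\otimes\iota_G)\circ\Delta_G$ and $(\iota_H\otimes\iota_H)\circ\Delta_H$ into the C*-algebra $(C(G)*_\C C(H))\otimes_{\rm min}(C(G)*_\C C(H))$, which are well defined because the minimal tensor product is functorial for $*$-homomorphisms. By the universal property of the full C*-free product these combine into a unique unital $*$-homomorphism $\Delta_*$ on $C(G)*_\C C(H)$ whose restrictions to the two factors are precisely the asserted formulas $\Delta_*(u_{ij})=\Delta_G(u_{ij})$ and $\Delta_*(v_{kl})=\Delta_H(v_{kl})$. It then remains to check the identity $\Delta_*(w_{ij})=\sum_k w_{ik}\otimes w_{kj}$ on every entry. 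For $i,j\le N_1$ the right-hand side collapses to $\sum_{k\le N_1}u_{ik}\otimes u_{kj}=\Delta_G(u_{ij})$, since $w_{ik}=0$ once $k>N_1$, and this matches $\Delta_*(u_{ij})$; the block $i,j>N_1$ is symmetric. For a mixed pair, say $i\le N_1<j$, the left-hand side is $\Delta_*(0)=0$, while on the right a nonzero $w_{ik}$ forces $k\le N_1$ and a nonzero $w_{kj}$ forces $k>N_1$, so the sum is empty; both sides vanish. This establishes (3).

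The argument is largely index bookkeeping, so the one point needing genuine care is the choice of free product: it must be the full (universal) C*-free product, whose universal property is what lets the pair of $*$-homomorphisms extend to a single $\Delta_*$. The mixed-index case of the comultiplication identity is the only place where the block structure of $w$ must be used directly rather than inherited from $\Delta_G$ and $\Delta_H$, and it is precisely the vanishing of the off-diagonal blocks of $w$ that makes $u\oplus v$ a representation of the free product.
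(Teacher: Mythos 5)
Your proposal is correct: all three axioms are verified, and the key step—building $\Delta_*$ from $(\iota_G\otimes\iota_G)\circ\Delta_G$ and $(\iota_H\otimes\iota_H)\circ\Delta_H$ via the universal property of the full free product, then checking the comultiplication formula blockwise on $u\oplus v$—is exactly the standard argument of Wang's paper, which this article cites without reproducing a proof.
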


We view $C(G)*_{\C} C(H)$ as the universal C*-algebra generated by elements $u_{ij}$ satisfying the relations from $C(G)$ and elements $v_{kl}$ satisfying the relations from $C(H)$, identifying the units but imposing no further relations.

We will call the quantum groups $G\times H$ and $G*H$ the \emph{tensor product} and the \emph{free product} of $G$ and $H$.

Actually, Wang defined those products in his articles in the general setting of compact quantum groups. However, in our article, we understand by the tensor and free product always this particular compact \emph{matrix} quantum group construction.

For example, we will often use the following construction. Consider a compact matrix quantum group $G=(C(G),u)$ and denote by $E:=(\C,1)$ the trivial compact matrix (quantum) group. Then we can construct $G\times E=(C(G),u\oplus 1)$, which is isomorphic to $G$ in the sense that there is a $*$-isomorphism mapping $C(G\times E)=C(G)\to C(G)$, but it is not identical with $G$ in the sense of the definition formulated in Subsection \ref{secc.qgdef} since the fundamental representation of $G$ is an $N\times N$ matrix, while the one of $G\times E$ is an $(N+1)\times (N+1)$ matrix.

\begin{prop}[\cite{TW17}] Let $G=(C(G),u)$ and $H=(C(H),v)$ be compact matrix quantum groups. Let $A$ be the C*-subalgebra of $C(G)\otimes_{\rm max} C(H)$ generated by the products $u_{ij}v_{kl}$, i.e.\ generated by the elements of the matrix $u\otimes v$. Then $G\tiltimes H:=(A,u\otimes v)$ is a compact matrix quantum group. For the co-multiplication we have that
$$\Delta_{\tiltimes}(u_{ij}v_{kl})=\Delta_G(u_{ij})\Delta_H(v_{kl})$$
\end{prop}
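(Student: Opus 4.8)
The plan is to verify the three axioms of a compact matrix quantum group from Subsection~\ref{secc.qgdef} for the pair $(A,u\otimes v)$, writing the entries of the Kronecker product with pair indices as $(u\otimes v)_{(i,k),(j,l)}=u_{ij}v_{kl}$. Axiom~(1) is immediate: by definition $A$ is generated by exactly these entries. Axiom~(2), invertibility of $u\otimes v$ and of $(u\otimes v)^t=u^t\otimes v^t$, follows from invertibility of $u,v,u^t,v^t$ over $C(G)$ and $C(H)$, since a Kronecker product of invertible matrices is invertible with $(u\otimes v)^{-1}=u^{-1}\otimes v^{-1}$; the only point needing care is that the inverse has its entries inside the subalgebra $A$. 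The cleanest route to Axiom~(3) is not to build $\Delta_{\tiltimes}$ by hand but to obtain it by restriction from the tensor product $G\times H$, which is already known to be a compact matrix quantum group by Wang's proposition above.

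Concretely, I would start from the comultiplication $\Delta_\times$ of $G\times H$ on $C(G)\otimes_{\rm max}C(H)$, which satisfies $\Delta_\times(u_{ij})=\Delta_G(u_{ij})$ and $\Delta_\times(v_{kl})=\Delta_H(v_{kl})$ in the paper's convention $u_{ij}=u_{ij}\otimes 1$, $v_{kl}=1\otimes v_{kl}$, where the two copies commute. Applying $\Delta_\times$ to a generator $u_{ij}v_{kl}$ of $A$ and using that it is a homomorphism together with the commutativity of the two tensor legs gives
$$\Delta_\times(u_{ij}v_{kl})=\Delta_G(u_{ij})\,\Delta_H(v_{kl})=\sum_{a,b}(u_{ia}v_{kb})\otimes(u_{aj}v_{bl}).$$
Two things are visible. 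First, the right-hand side lies in $A\otimes_{\rm min}A$, because each factor $u_{ia}v_{kb}$ and $u_{aj}v_{bl}$ is again a generator of $A$; since $A\otimes_{\rm min}A$ is a closed $*$-subalgebra of $(C(G)\otimes_{\rm max}C(H))\otimes_{\rm min}(C(G)\otimes_{\rm max}C(H))$ and $\Delta_\times$ is a continuous $*$-homomorphism, it follows that $\Delta_\times(A)\subset A\otimes_{\rm min}A$, so $\Delta_{\tiltimes}:=\Delta_\times|_A$ is a well-defined unital $*$-homomorphism with exactly the asserted formula. Second, in the pair-index notation $w_{(i,k),(j,l)}=u_{ij}v_{kl}$ the same identity reads $\Delta_{\tiltimes}(w_{(i,k),(j,l)})=\sum_{(a,b)}w_{(i,k),(a,b)}\otimes w_{(a,b),(j,l)}$, which is precisely the corepresentation property of $u\otimes v$ and is the conceptual reason the construction works.

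It remains to settle the entry-wise part of Axiom~(2). Here I would invoke the antipode $S$ of $G\times H$: the entries of the inverse of a corepresentation are $((u\otimes v)^{-1})_{(j,l),(i,k)}=S(u_{ij}v_{kl})=S(u_{ij})\,S(v_{kl})$, using that $S$ is an anti-homomorphism and that the two legs commute. In the orthogonal (Kac type) setting relevant to this paper, where $u$ and $v$ are orthogonal, one has $S(u_{ij})=u_{ji}$ and $S(v_{kl})=v_{lk}$, so these entries are themselves the generators $u_{ji}v_{lk}$ of $A$; hence $u\otimes v$ is invertible already over $A$, and the same computation applied to $(u\otimes v)^t=u^t\otimes v^t$ yields its invertibility. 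I expect this last point — ensuring that the inverse, equivalently the image under the antipode, does not leave the ``balanced'' subalgebra $A$ — to be the only genuinely delicate step, whereas Axioms~(1) and~(3) reduce to the bookkeeping of restricting the already-constructed comultiplication of $G\times H$.
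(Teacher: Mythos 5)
The paper itself contains no proof of this proposition --- it is quoted from \cite{TW17} --- so there is nothing internal to compare against; judged on its own, your argument is correct in the setting the paper actually works in, and the route you take (defining $\Delta_{\tiltimes}$ as the restriction of the comultiplication $\Delta_\times$ of the already-established compact matrix quantum group $G\times H$) is the natural one. The key computation $\Delta_\times(u_{ij}v_{kl})=\Delta_G(u_{ij})\Delta_H(v_{kl})=\sum_{a,b}(u_{ia}v_{kb})\otimes(u_{aj}v_{bl})$ shows both that $\Delta_\times$ maps the generators of $A$ into $A\otimes_{\rm min}A$ (using, as you correctly note, that $A\otimes_{\rm min}A$ embeds into $(C(G)\otimes_{\rm max}C(H))\otimes_{\rm min}(C(G)\otimes_{\rm max}C(H))$ by injectivity of the minimal tensor product) and that the restriction satisfies the corepresentation identity for $u\otimes v$, which is exactly axiom (3) together with the asserted formula.

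Two points deserve attention. First, you never verify that $A$ contains the unit of $C(G)\otimes_{\rm max}C(H)$; this is needed both for $A$ to be a \emph{unital} C*-algebra, as the definition of a CMQG in Subsection \ref{secc.qgdef} requires, and for calling $\Delta_\times|_A$ unital. It is a one-line check for unitary (in particular orthogonal) $u,v$: since the two legs commute, $\sum_{j,l}(u_{ij}v_{kl})(u_{ij}v_{kl})^*=\bigl(\sum_j u_{ij}u_{ij}^*\bigr)\bigl(\sum_l v_{kl}v_{kl}^*\bigr)=1$, so $1\in A$. Second, your treatment of axiom (2) via the antipode works but is both slightly misindexed and unnecessarily restrictive: the correct identity is $[(u\otimes v)^{-1}]_{(i,k),(j,l)}=S(u_{ij}v_{kl})$ (no transposition of the index pairs; your transposed version would say $u^{-1}=u$ for orthogonal $u$), and invoking it forces you to restrict to the Kac case, whereas \cite{TW17} states the proposition for arbitrary CMQGs. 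Once $1\in A$ is known, the cleaner argument is spectral permanence: $M_{NM}(A)$ is a unital C*-subalgebra of $M_{NM}(C(G)\otimes_{\rm max}C(H))$ with the same unit, and unital C*-subalgebras are inverse-closed; since $u\otimes v$ and $u^t\otimes v^t$ have entries in $A$ and are invertible over the ambient algebra (being Kronecker products of invertible matrices), their inverses automatically have entries in $A$. This removes the Kac hypothesis and yields the proposition in the generality in which it is cited.
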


The quantum group $G\tiltimes H$ is called the \emph{glued tensor product} of $G$ and $H$. Similarly, one can define the \emph{glued free product} $G\tilstar H$, which we will not use in this article.

As an example, let us mention that $S_N'^+=S_N^+\tiltimes\hat\Z_2$ and $B_N'^+=B_N^+\tiltimes\hat\Z_2$ \cite[Proposition 5.1]{Web13}.

\section{Constructing a quantum group generated by a subrepresentation}
\label{sec.isometry}
In this section, we introduce the main objects of this article -- the quantum groups $G_+^{\rm irr}$ and $G_-^{\rm irr}$ associated to a quantum group $G$ with $S_N\subset G\subset B_N^{\# +}$. The starting point is Lemma~\ref{L.reducible}. See also Subsection \ref{secc.warmup} for a motivation.

\subsection{Quantum groups with reducible subrepresentation}
The fundamental representation of $S_N$ decomposes into a direct sum of two irreducible representations: the trivial representation acting on the invariant subspace spanned by the vector $\xi:=\sum_{i=1}^N e_i$ and the \emph{standard representation}, which acts faithfully on the orthogonal complement of $\spanlin\{\xi\}$.

Therefore, the fundamental representation of any quantum group $G\supset S_N$ has at most those two invariant subspaces.

\begin{lem}[{\cite[Proposition 2.5(iii)]{RW15}}]
\label{L.reducible}
Let $G$ be a compact matrix quantum group such that $S_N\subset G\subset O_N^+$ corresponding to a linear category of partitions $\Kat$. The fundamental representation of $G$ is reducible if and only if $\singleton\otimes\singleton\in\Kat$, which holds if and only if $G\subset B_N^{\#+}$.
\end{lem}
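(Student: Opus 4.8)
The plan is to prove a chain of equivalences linking three conditions: (i) the fundamental representation $u$ of $G$ is reducible; (ii) $\singleton\otimes\singleton\in\Kat$; (iii) $G\subset B_N^{\#+}$. The cleanest strategy is to translate reducibility into the language of intertwiners via the Tannaka--Krein correspondence of Corollary~\ref{C.Tannaka}, and then identify the relevant intertwiner with a concrete partition.

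First I would recall from the discussion of representations that a unitary representation $u$ is reducible precisely when it admits a nontrivial invariant subspace, which happens if and only if there is a self-adjoint projection $P\neq 0,1$ with $P\in\Mor(u,u)$. Since $S_N\subset G$, the observation made just before the lemma restricts the possible invariant subspaces of $u$ to at most $\spanlin\{\xi\}$ and its orthogonal complement, where $\xi=\sum_i e_i$. Hence $u$ is reducible if and only if the rank-one projection $P_\xi=\frac{1}{N}\xi\xi^*$ onto $\spanlin\{\xi\}$ lies in $\Mor(u,u)$. The key computation is then to recognize this projection as a map coming from a partition: one checks that $T_{\disconnecterpart}=\xi\xi^*$ (the map sending $e_i\otimes e_j\mapsto\sum_{k,l}e_k\otimes e_l$, equivalently the operator with all entries equal to one), so that $P_\xi=\frac{1}{N}T_{\disconnecterpart}$. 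Therefore $u$ is reducible if and only if $T_{\disconnecterpart}\in\Mor(u\otimes u,u\otimes u)$, which by Corollary~\ref{C.Tannaka} holds if and only if $\disconnecterpart\in\Kat(2,2)$.

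It remains to connect $\disconnecterpart\in\Kat$ with $\singleton\otimes\singleton\in\Kat$, and with the inclusion $G\subset B_N^{\#+}$. For the first, I would invoke closedness of $\Kat$ under rotations (the lemma following Subsection~\ref{secc.plow}): since $\singleton\otimes\singleton$ is precisely a rotation of $\disconnecterpart$ (as already noted in Subsection on the role of the double singleton), the two partitions generate each other inside any linear category, giving $\disconnecterpart\in\Kat\iff\singleton\otimes\singleton\in\Kat$. For the equivalence with $G\subset B_N^{\#+}$, recall that $B_N^{\#+}$ is by definition the easy quantum group with category $\langle\singleton\otimes\singleton\rangle\nlin$; the quantum subgroup relation $G\subset B_N^{\#+}$ corresponds under Tannaka--Krein duality to the reverse inclusion of categories $\langle\singleton\otimes\singleton\rangle\nlin\subset\Kat$, which is equivalent to the single generator condition $\singleton\otimes\singleton\in\Kat$.

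The main obstacle I anticipate is the identification $T_{\disconnecterpart}=\xi\xi^*$ together with the argument that this is the \emph{only} candidate intertwiner to consider. The restriction of invariant subspaces to $\spanlin\{\xi\}$ and its complement is what makes the equivalence tight: without it, reducibility would merely give \emph{some} projection in $\Mor(u,u)$, not necessarily a partition-valued one. I would therefore want to state carefully that, because $S_N\subset G$, every intertwiner of $u$ with itself is also an intertwiner for the permutation representation, whose commutant is two-dimensional (spanned by $1$ and $P_\xi$); hence any invariant projection of $u$ must be $0$, $1$, $P_\xi$, or $1-P_\xi$, pinning down reducibility to the membership of $P_\xi$, equivalently $T_{\disconnecterpart}$, in the intertwiner space. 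The remaining steps are then routine applications of rotation-closedness and the dictionary of Corollary~\ref{C.Tannaka}.
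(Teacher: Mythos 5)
Your proposal is correct and follows essentially the same route as the paper: reducibility is pinned down (via $S_N\subset G$) to the projection $\frac{1}{N}T_{\disconnecterpart}=\frac{1}{N}\xi\xi^*$ lying in $\Mor(u,u)$, which by Tannaka--Krein and rotation-closedness is equivalent to $\singleton\otimes\singleton\in\Kat$, i.e.\ to $G\subset B_N^{\#+}$; your version merely spells out the commutant and duality steps that the paper leaves implicit. Note two small slips: $\disconnecterpart$ is the $(1,1)$-partition, so $T_{\disconnecterpart}$ sends $e_i\mapsto\sum_k e_k$ (not $e_i\otimes e_j\mapsto\sum_{k,l}e_k\otimes e_l$), and the membership should read $T_{\disconnecterpart}\in\Mor(u,u)$ rather than $\Mor(u\otimes u,u\otimes u)$.
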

\begin{proof}
As mentioned above, the fundamental representation $u$ of a quantum group $G\supset S_N$ is reducible if and only if $\spanlin\{\xi\}$ is an invariant subspace. The projection onto $\spanlin\{\xi\}$ can be written as $\frac{1}{N}T_{\disconnecterpart}$. Thus, $u$ is reducible if and only if $T_{\disconnecterpart}\in\Mor(u,u)$, which holds if and only if $\disconnecterpart\in\Kat$, which holds if and only if $\singleton\otimes\singleton\in\Kat$. (Recall that $B_N^{\#+}$ is the easy quantum group, whose category is generated by the partition $\singleton\otimes\singleton$.)
\end{proof}

Consider a quantum group $G$ such that $S_N\subset G\subset B_N^{\# +}$, so its fundamental representation $u$ has two invariant subspaces -- $\spanlin\{\xi\}$ and its orthogonal complement $\spanlin\{\xi\}^\perp$. This means that taking any linear map $U\colon\C^N\to\C^N$ such that $\spanlin\{\xi\}^\perp$ is mapped onto the space spanned by the first $N-1$ basis vectors $\spanlin\{e_1,\dots,e_{N-1}\}$ and $\xi$ is mapped onto (a multiple of) $e_N$, we get that $UuU^{-1}=v\oplus r$, where $v\in M_{N-1}(C(G))$ and $r\in C(G)$. 

If, in addition, the matrix $U$ is orthogonal, then $UuU^{-1}$ is orthogonal, which means that $v$ is orthogonal and $r$ is a self-adjoint unitary (i.e.\ $r=r^*$ and $r^2=1$). In particular, both $v$ and $r$ are unitary representations of $G$. To extract just the subrepresentation $v$, we can define an $(N-1)\times N$ matrix $V$ by taking the first $N-1$ rows of $U$. Then we have $v=VuV^*$.

Note that in the condition $U\xi=\alpha e_N$ the orthogonality implies $\alpha=\pm\sqrt{N}$. The condition $U(\spanlin\{\xi\}^\perp)\subset\spanlin\{e_1,\dots,e_{N-1}\}$ is then satisfied automatically. Equivalently, we may require that the last row of $U$ equals to $\frac{\pm 1}{\sqrt{N}}\xi^*$.

For the rest of this subsection, suppose that $U\in M_N(\C)$ is an orthogonal matrix such that $U\xi=\pm\sqrt{N}e_N$ and $V$ is the $(N-1)\times N$ matrix obtained by taking the first $N-1$ rows of $U$. 

\begin{lem}
\label{L.V}
$V^*$ is an isometry and $\ker V=\spanlin\{\xi\}$. That is, $VV^*=1_{\C^{N-1}}$ and $V^*V=P_{(N)}$, where $P_{(N)}$ is the orthogonal projection onto $\spanlin\{\xi\}^\perp$.
\end{lem}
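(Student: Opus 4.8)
The plan is to derive all three claims from a single structural fact: since $U$ is real orthogonal, its rows form an orthonormal basis of $\C^N$, and the last of these rows is completely determined by the normalization condition. First I would make that last row explicit. Orthogonality gives $\xi = U^tU\xi = U^t(\pm\sqrt{N}\,e_N)$, so that $U^t e_N = \frac{\pm 1}{\sqrt N}\xi$; equivalently, the $N$-th row of $U$ equals $\frac{\pm 1}{\sqrt N}\xi^*$, as was already observed in the text preceding the lemma. This is the only place where I use more than bare orthogonality, and it is the step I would flag as the one requiring a moment's care: the identification of the last row rests on $U$ being orthogonal, not merely on the defining condition $U\xi=\pm\sqrt N\,e_N$.

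Next I would establish that $V^*$ is an isometry by a direct entrywise computation. Since $V$ consists of the first $N-1$ rows of $U$ and $U$ has real entries, for $i,k\le N-1$ one has $(VV^*)_{ik}=\sum_j U_{ij}\overline{U_{kj}}=\sum_j U_{ij}U_{kj}=(UU^t)_{ik}=\delta_{ik}$, so $VV^*=1_{\C^{N-1}}$. This is exactly the assertion that $V^*$ is an isometry from $\C^{N-1}$ into $\C^N$.

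For the kernel I would use that the (transposed) rows $\rho_1,\dots,\rho_N$ of $U$ form an orthonormal basis and that $(Vx)_i=\langle\rho_i,x\rangle$ for $i\le N-1$ (using that the $\rho_i$ are real). Hence $Vx=0$ precisely when $x$ is orthogonal to $\rho_1,\dots,\rho_{N-1}$, i.e.\ when $x\in\spanlin\{\rho_N\}$; and $\rho_N=\frac{\pm 1}{\sqrt N}\xi$ by the first step, giving $\ker V=\spanlin\{\xi\}$. Finally, for $V^*V$ I would argue abstractly from $VV^*=1$: the operator $V^*V$ is self-adjoint and satisfies $(V^*V)^2=V^*(VV^*)V=V^*V$, so it is an orthogonal projection, and since $V^*$ is injective its kernel is $\ker V=\spanlin\{\xi\}$. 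An orthogonal projection is determined by its kernel, so $V^*V$ projects onto $\spanlin\{\xi\}^\perp$, which is $P_{(N)}$ by definition. Beyond the identification of the last row, every step is routine linear algebra, so I do not expect any genuine obstacle.
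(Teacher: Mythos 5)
Your proof is correct and follows essentially the same route as the paper: both establish $VV^*=1_{\C^{N-1}}$ from the orthogonality of $U$ (the paper via the factorization $V=EU$ with $E$ the standard coisometry, you by an entrywise computation), deduce that $V^*V$ is an orthogonal projection, and identify it using the fact that the last row of $U$ is a multiple of $\xi$. The only cosmetic difference is that the paper pins down the projection by computing its range (the span of the rows of $V$), whereas you pin it down by computing its kernel $\ker V=\spanlin\{\xi\}$; these are dual formulations of the same argument.
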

\begin{proof}
The matrix $V$ can be expressed as $V=EU$, where $E$ is the ``standard'' coisometry $\C^N\to\C^{N-1}$ mapping $e_i\mapsto e_i$ for $i<N$ and $e_N\mapsto 0$. So, we have
$$VV^*=EUU^*E=EE^*=1_{\C^{N-1}}.$$
From this, it already follows that $V^*V$ is a projection. Its range is $V^*V\C^N=V^*\C^{N-1}$, so it is spanned by the rows of $V$ and hence it is indeed the orthogonal complement of the last row of $U$, which is a multiple of $\xi$.
\end{proof}

From the block structure $UuU^*=v\oplus r$, it follows that $\Delta(v_{ij})=\sum_k v_{ik}\otimes v_{kj}$, so we can define the following.

\begin{defn}
\label{D.VGV}
Let $G=(C(G),u)$, $u=(u_{ij})_{i,j=1}^N$ be a compact matrix quantum group such that $S_N\subset G\subset B_N^{\# +}$. Then we denote $VGV^*:=(A,v)$, where $v=VuV^*\in M_{N-1}(C(G))$ and $A$ is the $C^*$-subalgebra of $C(G)$ generated by $\{v_{ij}\}_{i,j=1}^{N-1}$.
\end{defn}

\begin{lem}
\label{L.VTV}
Let $G=(C(G),u)$, $u=(u_{ij})_{i,j=1}^N$ be a compact matrix quantum group such that $S_N\subset G\subset B_N^{\# +}$ and denote $v$ the fundamental representation of $VGV^*$. Then its intertwiner spaces are
$$\Mor(v^{\otimes k},v^{\otimes l})=\{V^{\otimes l}TV^{*\,\otimes k}\mid T\in\Mor(u^{\otimes k},u^{\otimes l})\}.$$
\end{lem}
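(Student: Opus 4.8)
The plan is to reduce everything to the single algebraic identity
$$v^{\otimes k}=V^{\otimes k}u^{\otimes k}V^{*\,\otimes k},\qquad k\in\N_0,$$
and then to manipulate the defining intertwiner relations purely formally. First I would establish this identity. Since $v=VuV^*$ and the entries of $V$ are scalars, expanding the product $\prod_c v_{a_cb_c}$ defining the $(\mathbf a,\mathbf b)$-entry of $v^{\otimes k}$ and pulling the scalar factors $V_{a_cs_c},\bar V_{b_ct_c}$ out of each factor leaves exactly the ordered product $u_{s_1t_1}\cdots u_{s_kt_k}$, which is the $(\mathbf s,\mathbf t)$-entry of $u^{\otimes k}$. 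The only point requiring care is that $C(G)$ is noncommutative, but since the entries of $V$ commute with everything, the ordering of the $u$'s is preserved and the identity holds verbatim.

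Next I would collect the elementary properties of $V$ from Lemma~\ref{L.V}, namely $VV^*=1_{\C^{N-1}}$ and $V^*V=P_{(N)}$, together with the immediate consequence $VP_{(N)}=VV^*V=V$, hence $V^{\otimes k}P_{(N)}^{\otimes k}=(VP_{(N)})^{\otimes k}=V^{\otimes k}$ (and symmetrically $P_{(N)}^{\otimes k}V^{*\,\otimes k}=V^{*\,\otimes k}$). The crucial extra input is that $P_{(N)}$ intertwines $u$: since $S_N\subset G\subset B_N^{\#+}$, Lemma~\ref{L.reducible} gives $\singleton\otimes\singleton\in\Kat$, i.e.\ $\disconnecterpart\in\Kat$, so $P_{(N)}=1-\frac1NT_{\disconnecterpart}\in\Mor(u,u)$ and therefore $P_{(N)}^{\otimes k}\in\Mor(u^{\otimes k},u^{\otimes k})$ commutes with $u^{\otimes k}$. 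Combining these with the main identity, I would derive the two \emph{mixed} intertwiner relations
$$V^{\otimes k}u^{\otimes k}=v^{\otimes k}V^{\otimes k},\qquad V^{*\,\otimes k}v^{\otimes k}=u^{\otimes k}V^{*\,\otimes k},$$
each obtained by inserting $V^{*\,\otimes k}V^{\otimes k}=P_{(N)}^{\otimes k}$, commuting $P_{(N)}^{\otimes k}$ past $u^{\otimes k}$, and absorbing it into $V^{\otimes k}$ (resp.\ $V^{*\,\otimes k}$).

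With these relations the two inclusions become short formal computations. For $\supseteq$, given $T\in\Mor(u^{\otimes k},u^{\otimes l})$ I would compute
$$(V^{\otimes l}TV^{*\,\otimes k})\,v^{\otimes k}=V^{\otimes l}T\,u^{\otimes k}V^{*\,\otimes k}=V^{\otimes l}u^{\otimes l}\,TV^{*\,\otimes k}=v^{\otimes l}\,(V^{\otimes l}TV^{*\,\otimes k}),$$
using the mixed relations at the outer steps and $Tu^{\otimes k}=u^{\otimes l}T$ in the middle, so $V^{\otimes l}TV^{*\,\otimes k}\in\Mor(v^{\otimes k},v^{\otimes l})$. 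For $\subseteq$, given $S\in\Mor(v^{\otimes k},v^{\otimes l})$ I would set $T:=V^{*\,\otimes l}SV^{\otimes k}$; then $V^{\otimes l}TV^{*\,\otimes k}=(VV^*)^{\otimes l}S(VV^*)^{\otimes k}=S$ since $VV^*=1$, while the same chain of mixed relations together with $Sv^{\otimes k}=v^{\otimes l}S$ yields $Tu^{\otimes k}=u^{\otimes l}T$, i.e.\ $T\in\Mor(u^{\otimes k},u^{\otimes l})$.

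The only genuinely delicate step is the first one: verifying $v^{\otimes k}=V^{\otimes k}u^{\otimes k}V^{*\,\otimes k}$ correctly in the noncommutative algebra $C(G)$, and noting that the intertwiner relations may equally be read in the subalgebra generated by $v$ rather than in all of $C(G)$ (which is harmless, since they involve only $v$ and the scalar matrices). Everything after that is bookkeeping with $VV^*=1$, $VP_{(N)}=V$, and the commutation of $P_{(N)}^{\otimes k}$ with $u^{\otimes k}$.
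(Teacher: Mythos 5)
Your proof is correct and follows essentially the same route as the paper's: both inclusions are obtained from $VV^*=1_{\C^{N-1}}$, $V^*V=P_{(N)}$, and the fact that $P_{(N)}$ commutes with $u$ because it is the projection onto an invariant subspace. The only difference is presentational --- you isolate the identity $v^{\otimes k}=V^{\otimes k}u^{\otimes k}V^{*\,\otimes k}$ and the two mixed relations $V^{\otimes k}u^{\otimes k}=v^{\otimes k}V^{\otimes k}$, $V^{*\,\otimes k}v^{\otimes k}=u^{\otimes k}V^{*\,\otimes k}$ as explicit intermediate steps (with due care about noncommutativity of $C(G)$), whereas the paper absorbs these into a single displayed computation.
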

\begin{proof}
For the inclusion $\supset$ take an arbitrary $T\in\Mor(u^{\otimes k},u^{\otimes l})$, so $u^{\otimes l}T=Tu^{\otimes k}$. Now, we have
\begin{align*}
(V^{\otimes l}u^{\otimes l}V^{*\,\otimes l})(V^{\otimes l}TV^{*\,\otimes k})&=V^{\otimes l}u^{\otimes l}TV^{*\,\otimes k}\\&=V^{\otimes l}Tu^{\otimes k}V^{*\,\otimes k}=(V^{\otimes l}TV^{*\,\otimes k})(V^{\otimes k}u^{\otimes k}V^{*\,\otimes k}),
\end{align*}
where we used that $V^*V$ is the projection onto the $(N-1)$-dimensional invariant subspace, so it commutes with $u$. For the inclusion $\subset$, we can similarly prove that given an intertwiner $T\in\Mor(v^{\otimes k},v^{\otimes l})$, we have that $V^{*\,\otimes l}TV^{\otimes k}\in\Mor(u^{\otimes k},u^{\otimes l})$ and we can express $T=V^{\otimes k}(V^{*\,\otimes k}TV^{\otimes l})V^{*\,\otimes l}$ using that $VV^*$ is the identity.
\end{proof}

\subsection{The definition of $G_+^{\rm irr}$ and $G_-^{\rm irr}$}
In the following, we define such an orthogonal matrix $U$ explicitly. Recall that we require the last row of $U$ to be $\frac{\pm 1}{\sqrt{N}}\xi^*$. For the rest of the matrix $U$ we can choose arbitrary rows that complete the last one to an orthonormal basis. Nevertheless, we choose a very specific symmetric form, where the matrix elements $U_{ij}$ can be written as a combination $a\delta_{ij}+b$. The motivation will be clear in the following text (compare also with Section \ref{sec.similar}).

\begin{defn}
\label{D.U}
Let us define two orthogonal matrices $U_{(N,+)},U_{(N,-)}\in M_N(\C)$ as follows
$$[U_{(N,\pm)}]_{ij}=\delta_{ij}-{1\over N-1}\left(1\pm{1\over\sqrt{N}}\right),$$
$$[U_{(N,\pm)}]_{iN}=[U_{(N,\pm)}]_{Nj}=[U_{(N,\pm)}]_{NN}=\pm{1\over\sqrt{N}}.$$
for $i,j\in\{1,\dots,N-1\}$. Let us also denote $V_{(N,\pm)}$ the $(N-1)\times N$ matrix formed by the first $N-1$ rows of $U_{(N,\pm)}$.
\end{defn}
%
%

\begin{defn}
\label{D.Girr}
	Let $G=(C(G),u)$ be a compact matrix quantum group such that $S_N\subset G\subset B_N^{\# +}$. Then we denote $G^{\rm irr}_\pm:=V_{(N,\pm)}GV_{(N,\pm)}^*$ as in Definition \ref{D.VGV}.
\end{defn}

\begin{lem}
\label{L.VSV}
It holds that
$$S_{N-1}\subset V_{(N,\pm)}S_{N}V_{(N,\pm)}^*.$$
\end{lem}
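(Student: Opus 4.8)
The plan is to exploit that both $S_{N-1}$ and $V_{(N,\pm)}S_NV_{(N,\pm)}^*$ are genuine (classical, commutative) compact matrix groups, so by Gelfand duality the quantum subgroup relation reduces to producing a matching matrix realization of $S_{N-1}$ inside $V_{(N,\pm)}S_NV_{(N,\pm)}^*$. Concretely, I would write $\iota\colon S_{N-1}\hookrightarrow S_N$ for the standard inclusion fixing the last coordinate $N$, let $w$ denote the fundamental (permutation) representation of $S_{N-1}$, i.e.\ $w(\tau)e_i=e_{\tau(i)}$, and let $P_\sigma$ be the $N\times N$ permutation matrix of $\sigma\in S_N$, so that the fundamental representation $v$ of $V_{(N,\pm)}S_NV_{(N,\pm)}^*$ satisfies $v(\sigma)=V_{(N,\pm)}P_\sigma V_{(N,\pm)}^*$. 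Both fundamental representations have size $N-1$, so it suffices to prove the matrix identity
$$V_{(N,\pm)}P_{\iota(\tau)}V_{(N,\pm)}^*=w(\tau)\qquad\text{for all }\tau\in S_{N-1}.$$
Indeed, the pullback $\iota^*\colon C(S_N)\to C(S_{N-1})$ then restricts to a $*$-homomorphism on $A=C(V_{(N,\pm)}S_NV_{(N,\pm)}^*)$ sending $v_{ij}\mapsto w_{ij}$; it is surjective because the entries $w_{ij}$ separate the points of $S_{N-1}$ and hence generate $C(S_{N-1})$, which is exactly the required quantum subgroup datum.

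The key computation is the intertwining relation $P_{\iota(\tau)}V_{(N,\pm)}^*=V_{(N,\pm)}^*w(\tau)$. The columns of $V_{(N,\pm)}^*$ are precisely the first $N-1$ rows of $U_{(N,\pm)}$, so by Definition~\ref{D.U} we have $V_{(N,\pm)}^*e_i=f_i$ with
$$f_i=e_i-c\,\xi'\pm\frac{1}{\sqrt N}\,e_N,\qquad c=\frac{1}{N-1}\Big(1\pm\frac{1}{\sqrt N}\Big),\ \ \xi'=\sum_{j=1}^{N-1}e_j,$$
for $i=1,\dots,N-1$. The decisive observation is that the correction term $-c\,\xi'\pm\frac{1}{\sqrt N}e_N$ is independent of $i$ and is fixed by $P_{\iota(\tau)}$, since $\iota(\tau)$ permutes $\{1,\dots,N-1\}$ (leaving $\xi'$ invariant) and fixes the point $N$. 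Hence $P_{\iota(\tau)}f_i=e_{\tau(i)}-c\,\xi'\pm\frac{1}{\sqrt N}e_N=f_{\tau(i)}$, i.e.\ $P_{\iota(\tau)}V_{(N,\pm)}^*e_i=V_{(N,\pm)}^*e_{\tau(i)}=V_{(N,\pm)}^*w(\tau)e_i$. Multiplying on the left by $V_{(N,\pm)}$ and using $V_{(N,\pm)}V_{(N,\pm)}^*=1_{\C^{N-1}}$ from Lemma~\ref{L.V} gives $V_{(N,\pm)}P_{\iota(\tau)}V_{(N,\pm)}^*=w(\tau)$.

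Finally, I would assemble these pieces: the identity shows that the representation $\sigma\mapsto v(\sigma)$ precomposed with $\iota$ equals $w$, so $\iota^*$ carries the generators $v_{ij}$ of $A$ to the generators $w_{ij}$ of $C(S_{N-1})$, yielding the desired surjective $*$-homomorphism and thus $S_{N-1}\subset V_{(N,\pm)}S_NV_{(N,\pm)}^*$. I do not anticipate a genuine obstacle, as the statement collapses to a short linear-algebra verification; the only point requiring care is selecting the correct embedding $\iota$ (fixing the coordinate matched with the distinguished vector $\xi$) so that the constant correction term is $\iota(S_{N-1})$-invariant. Notably the argument is uniform in the sign $\pm$ and does not rely on the precise value of $c$ beyond its appearance in $f_i$.
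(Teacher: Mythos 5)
Your proof is correct and takes essentially the same approach as the paper: both reduce the lemma to the matrix identity $V_{(N,\pm)}P_{\iota(\tau)}V_{(N,\pm)}^*=w(\tau)$ for permutations fixing $N$, verified from the special structure of $V_{(N,\pm)}$ (an $i$-independent correction term, invariant under $P_{\iota(\tau)}$) together with $V_{(N,\pm)}V_{(N,\pm)}^*=1_{\C^{N-1}}$ from Lemma~\ref{L.V}. The only cosmetic differences are that you organize the computation as an intertwining relation on the columns of $V_{(N,\pm)}^*$ rather than entrywise, and that you spell out the Gelfand-duality step (restriction along $\iota$ giving the surjective $*$-homomorphism) which the paper leaves implicit.
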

\begin{proof}
Take any permutation $\sigma\in S_N$ such that $\sigma(N)=N$. Consider its permutation matrix $A\in S_N$ given by $A_{ij}=\delta_{i\sigma(j)}$. Now, it is enough to check that $VAV^*=A|_{\C^{N-1}}$, where, for simplicity, we write just shortly $V:=V_{(N,\pm)}$. Indeed, thanks to the fact that $V$ is of the form $V_{ij}=a\delta_{ij}+b+c\delta_{Nj}$, it is easy to see that $V_{i\sigma^{-1}(j)}=V_{\sigma(i)j}$, so
$$[VAV^*]_{ij}=\sum_{k,l}V_{ik}\delta_{k\sigma(l)}V_{jl}=\sum_{k}V_{ik}V_{j\sigma^{-1}(k)}=\sum_k V_{ik}V_{\sigma(j)k}=[VV^*]_{i\sigma(j)}=\delta_{i\sigma(j)}.\qedhere$$
\end{proof}

We now prove the first part of our main Theorem~\ref{T1}.

\begin{thm}
\label{T.VG}
Let $G$ be a compact matrix quantum group such that $S_N\subset G\subset B_N^{\# +}$. Then it holds that $S_{N-1}\subset (S_N)^{\rm irr}_\pm\subset G^{\rm irr}_\pm\subset O_{N-1}^+$.
\end{thm}
\begin{proof}
For any pair of quantum groups $G,H$ such that $S_N\subset H\subset G\subset B_N^{\#+}$ (subgroups in the sense of Subsection \ref{secc.qgdef}), we have $H^{\rm irr}_{\pm}\subset G^{\rm irr}_{\pm}$ since the corresponding $*$-homomorphism $C(G)\to C(H)$ must also map the corresponding $(N-1)$-dimensional subrepresentations. From this, the middle inclusion follows.

Since the matrix $V_{(N,\pm)}$ is a partial isometry according to Lemma~\ref{L.V}, we have that the orthogonality of $u$ implies the orthogonality of $V_{(N,\pm)}uV_{(N,\pm)}^*$. So, we have $G^{\rm irr}_\pm\subset O_{N-1}^+$. Finally, the inclusion $S_{N-1}\subset (S_N)^{\rm irr}_\pm$ is due to Lemma \ref{L.VSV}.
\end{proof}

\begin{rem}
The quantum groups $G^{\rm irr}_+$ and $G^{\rm irr}_-$ are by construction obviously similar. In Section \ref{sec.similar}, we present an explicit regular matrix $T$ such that $TG^{\rm irr}_+ T^{-1}=G^{\rm irr}_-$ (see Proposition~\ref{P.tauirr}).
\end{rem}

\subsection{Induced map of partitions}
\label{secc.VP}
From Theorem~\ref{T.VG} and Corollary~\ref{C.Tannaka} it follows that for any quantum group corresponding to a linear category of partitions $\Kat\owns\singleton\otimes\singleton$, we have that the quantum group $G^{\rm irr}_\pm$ also corresponds to some category $\Kat_\pm$. Our goal is to describe this category explicitly.

In Lemma~\ref{L.VTV} we have proven that the intertwiner spaces for the quantum group $G^{\rm irr}_\pm$ are of the form
$$\Mor(v^{\otimes k},v^{\otimes l})=\{V_{(N,\pm)}^{\otimes l}T_pV_{(N,\pm)}^{*\,\otimes k}\mid p\in\Kat\}.$$
Thus, it remains to find for all linear combinations of partitions $p\in\Lart\nlin$ a linear combination $q\in\Lart\nnnlin$ such that $T_q=V_{(N,\pm)}^{\otimes l}T_pV^{*\,\otimes k}_{(N,\pm)}$. We will now define an operator $\V_{(N,\pm)}$ mapping $p\mapsto q$.

To formulate it we need to introduce some notation. A partition, where all points are elements of a single block is called a \emph{block partition}. A block partition with no upper points and $k$ lower points will be denoted $b_k\in\Lart(0,k)$.

Now, consider a subset $I\subset\{1,\dots,k\}$. A partition made by disconnecting all points in the set $I$ from a block partition $b_k$ will be denoted $b_{k,I}$. For example, $b_{4,\{2,4\}}$ is a partition with no upper and four lower points, where on position 2 and 4 there is a singleton and the rest is contained in one block, so $b_{4,\{2,4\}}=\Labac$.

Now, let us denote
$$b_{k,i}:=\sum_{\substack{I\subset\{1,\dots,k\}\\ |I|=i}} b_{k,I}$$
for all $i<k$. Note that this definition implies $b_{k,k-1}=k\singleton^{\otimes k}$. We have for example
$$b_{3,0}=b_3=\Laaa,\quad b_{3,1}=\Laab+\Laba+\Labb,\quad b_{3,2}=3\,\Labc.$$

Finally, let us define in $\Lart\nnnlin(0,k)$ the element $b_{k,k}:=(N-1)\singleton^{\otimes k}$.

\begin{defn}
\label{D.VK}
We define the following operator $\V_{(N,\pm)}\colon\Lart\nlin\to\Lart\nnnlin$. For a block partition $b_k\in\Lart(0,k)$, we define
$$\V_{(N,\pm)}b_k=\sum_{i=0}^k \left({-1\over N-1}\left(1\pm{1\over\sqrt{N}}\right)\right)^ib_{k,i}+\left({\pm1\over\sqrt{N}}\right)^k\singleton^{\otimes k}\in\Lart\nnnlin(0,k).$$
For a block partition with $k$ upper and $l$ lower points $\Rrot^k b_{k+l}\in\Lart(k,l)$ we define $\V_{(N,\pm)}\Rrot^k b_{k+l}:=\Rrot^k\V_{(N,\pm)}b_{k+l}$. For general $p\in\Lart$, we define the action of $\V_{(N,\pm)}$ blockwise. For general $p\in\Lart\nlin$ we extend the action linearly.
\end{defn}

\begin{ex}
\label{ex.V}
As an example, let us mention how $\V_{(N,\pm)}$ acts on the smallest block partitions.
\begin{align*}
&\V_{(N,\pm)}b_1=\V_{(N,\pm)}\singleton=0,\\
&\V_{(N,\pm)}b_2=\V_{(N,\pm)}\pairpart=\pairpart,\\
&\V_{(N,\pm)}b_3=\V_{(N,\pm)}\Laaa\\&=\Laaa-{1\over N-1}\left(1\pm{1\over\sqrt{N}}\right)(\Laab+\Laba+\Labb)+{1\over (N-1)^2}\left(2\pm{N+1\over\sqrt{N}}\right)\Labc,\\
&\V_{(N,\pm)}b_4=\Laaaa-{1\over N-1}\left(1\pm{1\over\sqrt{N}}\right)(\Laaab+\Laaba+\Labaa+\Labbb)+\\&+{1\over (N-1)^2}\left(\frac{N+1}{N}\pm{2\over\sqrt{N}}\right)(\Laabc+\Labac+\Labca+\\&+\Labbc+\Labcb+\Labcc)+\frac{1}{(N-1)^3}\left(\frac{N^2-6N-5}{N}\mp\frac{8}{\sqrt{N}}\right)\Labcd.
\end{align*}
\end{ex}

\begin{lem}
\label{L.VV}
The map $\V_{(N,\pm)}$ satisfies the following.
\begin{enumerate}
\item For any $p\in\Lart\nlin$, we have $\V_{(N,\pm)}p=p+q$, where $q$ is a linear combination of partitions containing a singleton.
\item $\ker\V_{(N,\pm)}|_{\Lart\nlin(k,l)}=\spanlin\{p\in\Lart(k,l)\mid\hbox{$p$ contains a singleton}\}$.
\end{enumerate}
\end{lem}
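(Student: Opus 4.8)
The plan is to reduce both statements to the behaviour of $\V_{(N,\pm)}$ on a single block, and then bootstrap to arbitrary partitions via the blockwise definition and to linear combinations via linearity. The two facts I would isolate first are: on a block partition $b_k$ with $k\ge 2$ the ``whole block'' survives with coefficient $1$ and everything else carries a singleton, while on the one-point block one has $\V_{(N,\pm)}b_1=0$.

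For part (1) I would read off Definition~\ref{D.VK} termwise on $b_k$ ($k\ge2$). The summand $i=0$ is $b_{k,0}=b_k$ with coefficient $1$; each summand with $i\ge1$ is a scalar multiple of $b_{k,i}$, a sum of partitions in which at least one point has been split off as a singleton; and the extra term $(\pm1/\sqrt N)^k\singleton^{\otimes k}$ consists entirely of singletons. Hence $\V_{(N,\pm)}b_k=b_k+q$ with $q$ singleton-containing. Since $\V_{(N,\pm)}$ acts blockwise, for a singleton-free partition $p$ (all blocks of size $\ge2$) the expansion of $\V_{(N,\pm)}p$ over independent configuration choices for each block has exactly one term with no singleton, namely the one keeping every block whole; its coefficient is the product of the leading coefficients $1$, so it equals $p$, and every other term produces at least one singleton. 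Thus $\V_{(N,\pm)}p=p+q$ with $q$ singleton-containing for singleton-free $p$, and by linearity for any element of $\spanlin\{p\text{ singleton-free}\}$.

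Next I would record $\V_{(N,\pm)}b_1=\V_{(N,\pm)}\singleton=0$ (as in Example~\ref{ex.V}): the contributions $\singleton$, then $\frac{-1}{N-1}(1\pm\frac1{\sqrt N})b_{1,1}=-(1\pm\frac1{\sqrt N})\singleton$ using $b_{1,1}=(N-1)\singleton$, and finally $\pm\frac1{\sqrt N}\singleton$, cancel. Because $\V_{(N,\pm)}$ is blockwise, any $p$ with a singleton block acquires a factor $\V_{(N,\pm)}b_1=0$, so $\V_{(N,\pm)}p=0$; writing $0=p+(-p)$ with $-p$ singleton-containing finishes (1) for these $p$ and, by linearity, for all of $\Lart\nlin$. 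Simultaneously this gives $\spanlin\{p\text{ containing a singleton}\}\subset\ker\V_{(N,\pm)}$, one inclusion of (2).

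For the reverse inclusion in (2) I would use the triangular structure from part (1). Let $\pi$ be the linear projection annihilating every singleton-containing basis partition; by the second paragraph $\pi\,\V_{(N,\pm)}p=p$ for all singleton-free $p$, so $\V_{(N,\pm)}$ is injective on $\spanlin\{p\text{ singleton-free}\}$. Given $x\in\ker\V_{(N,\pm)}$, split $x=x_S+x_T$ into singleton-free and singleton-containing parts; since $\V_{(N,\pm)}x_T=0$ by the inclusion above, also $\V_{(N,\pm)}x_S=0$, hence $x_S=\pi\,\V_{(N,\pm)}x_S=0$ and $x=x_T$ lies in the required span. The only genuinely delicate point is making the blockwise expansion precise enough to certify that the unique singleton-free term of $\V_{(N,\pm)}p$ is $p$ with coefficient $1$; once this is pinned down, everything else is formal.
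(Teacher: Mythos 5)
Your proof is correct and follows essentially the same route as the paper's: read off from Definition~\ref{D.VK} that the leading ($i=0$) term of $\V_{(N,\pm)}b_k$ is $b_k$ with coefficient $1$ while all other terms carry singletons, use the blockwise action (together with $\V_{(N,\pm)}\singleton=0$) to get part (1) and the inclusion $\supset$ in part (2), and then deduce the reverse inclusion from the triangular form $\V_{(N,\pm)}p=p+q$. Your explicit handling of singleton-containing partitions in part (1) and the projection $\pi$ in the kernel argument are just more careful spellings of steps the paper leaves implicit.
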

\begin{proof}
Directly from the definition of $\V_{(N,\pm)}$ we see that (1) holds for block partitions. Since $\V_{(N,\pm)}$ acts blockwise, it must hold for any partition $p$. 

To prove part (2) note that we have $\V\singleton=0$ (see the preceding Example \ref{ex.V}). Since $\V$ acts blockwise, we have also $\V p=0$ for any $p$ containing a singleton. This proves the inclusion $\supset$. For the opposite inclusion, we use part (1). Consider $p\in\ker\V_{(N,\pm)}$, so $0=\V_{(N,\pm)}p=p+\vphantom q$partitions containing a singleton. Thus, $p$ must be a linear combination of partitions containing a singleton.
\end{proof}

We complete the proof of our main Theorem~\ref{T1} by describing the category of partitions corresponding to the quantum groups $G_\pm^{\rm irr}$.

\begin{thm}
\label{T.VK}
It holds that
$$T_{\V_{(N,\pm)}p}=V_{(N,\pm)}^{\otimes l}T_pV_{(N,\pm)}^{*\,\otimes k}$$
for any $p\in\Lart\nlin(k,l)$. Hence, if $G$ is a compact matrix quantum group corresponding to a category $\Kat$ containing $\singleton\otimes\singleton$, then $G_\pm^{\rm irr}$ corresponds to
$$\V_{(N,\pm)}\Kat=\{\V_{(N,\pm)}p\mid p\in\Kat(k,l)\}\subset\Lart\nnnlin.$$
\end{thm}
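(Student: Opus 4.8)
The plan is to prove the operator identity $T_{\V_{(N,\pm)}p}=V^{\otimes l}T_pV^{*\,\otimes k}$ first, where I abbreviate $V:=V_{(N,\pm)}$, and then read off the statement about categories from Lemma~\ref{L.VTV}. Since both sides are linear in $p$, it suffices to treat a single partition $p\in\Lart(k,l)$. Write $\Phi(p):=V^{\otimes l}T_pV^{*\,\otimes k}$. The tempting route --- checking the identity on generators and propagating it through the category operations --- fails for composition, because $V^*V=P_{(N)}\neq1$ (Lemma~\ref{L.V}), so $\Phi$ is \emph{not} a functor. Instead I would reduce $p$ to a single block partition with lower points only, using that the two operations that \emph{do} behave well, namely rotation and the passage from blocks to the whole partition, are exactly the ones built into Definition~\ref{D.VK}.

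First I would record two facts coming straight from $VV^*=1_{\C^{N-1}}$: the cup and cap get mapped correctly, $V^{\otimes2}T_{\pairpart}=T_{\pairpart}$ and $T_{\uppairpart}V^{*\,\otimes2}=T_{\uppairpart}$ (the left sides over $\C^N$, the right sides over $\C^{N-1}$); indeed $\sum_i(Ve_i)\otimes(Ve_i)=\sum_{a,b}(VV^*)_{ab}\,e_a\otimes e_b=\sum_a e_a\otimes e_a$. Writing rotation at the operator level as $T_{\Rrot p}=(\mathrm{id}^{\otimes(l-1)}\otimes T_{\uppairpart})(T_p\otimes\mathrm{id})$, I would push the $V$'s through: the last cap absorbs one $V$ and one $V^*$ and collapses to the cap over $\C^{N-1}$, giving $\Phi(\Rrot p)=(\mathrm{id}^{\otimes(l-1)}\otimes T_{\uppairpart})(\Phi(p)\otimes\mathrm{id}_{\C^{N-1}})$. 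Since $T_\bullet$ over $\C^{N-1}$ is functorial, the right-hand side is the operator rotation of $T_{\V p}$, i.e.\ $T_{\Rrot\V p}=T_{\V\Rrot p}$ because $\V$ commutes with rotation by definition. Thus the identity is stable under rotation, and rotating all upper points down reduces everything to partitions $p\in\Lart(0,n)$ with lower points only.

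For such $p$ with blocks $B_1,\dots,B_r$ one has $T_p(1)=\sum_{c_1,\dots,c_r=1}^N\bigotimes_t e_{c_{\beta(t)}}$, where $\beta(t)$ is the block of position $t$; applying $V^{\otimes n}$ and expanding each $Ve_c$ factorizes the result over blocks. Here I would split each block-value sum into $c\le N-1$, where $Ve_c=e_c-\beta\,\eta'$ with $\beta=\tfrac1{N-1}(1\pm\tfrac1{\sqrt N})$ and $\eta'=\sum_{a=1}^{N-1}e_a$, and $c=N$, where $Ve_N=\gamma\,\eta'$ with $\gamma=\pm\tfrac1{\sqrt N}$. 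The whole content then sits in the single-block identity $\sum_{c=1}^{N-1}(e_c-\beta\,\eta')^{\otimes m}+\gamma^m(\eta')^{\otimes m}=T_{\V b_m}(1)$: expanding the tensor power indexes subsets $S\subseteq\{1,\dots,m\}$ of positions carrying $\eta'$, and for $S\neq\{1,\dots,m\}$ the remaining positions form a genuine block so $\sum_c$ reproduces $T_{b_{m,S}}(1)$ with coefficient $(-\beta)^{|S|}$, matching $\sum_i(-\beta)^i b_{m,i}$; the extra $\gamma^m(\eta')^{\otimes m}$ matches the last term $\gamma^m\singleton^{\otimes m}$ of $\V b_m$. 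Interleaving these per-block factors according to $\beta$ yields $T_{\V p}(1)=V^{\otimes n}T_p(1)$ for general lower-only $p$, hence for all $p$.

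I expect the genuine obstacle to be two intertwined bookkeeping points in the last step. One is making the ``blockwise'' factorization rigorous despite the blocks of $p$ being interleaved rather than tensor-separated, so that the single-block computation really assembles into $T_{\V p}$; this is a careful index argument relying on the product structure $\delta_p=\prod_s\delta_{B_s}$. The other, more delicate, is the all-singletons term $S=\{1,\dots,n\}$: there the sum over $c\le N-1$ contributes a stray factor $N-1$, and the identity only closes because Definition~\ref{D.VK} sets $b_{n,n}:=(N-1)\singleton^{\otimes n}$ --- so verifying that this convention is exactly the one forced by the computation is the crux. Once the operator identity is established, the category statement follows formally: by Lemma~\ref{L.VTV}, $\Mor(v^{\otimes k},v^{\otimes l})=\{V^{\otimes l}T_pV^{*\,\otimes k}\mid p\in\Kat(k,l)\}=\{T_{\V_{(N,\pm)}p}\mid p\in\Kat(k,l)\}$, and since Theorem~\ref{T.VG} gives $S_{N-1}\subset G^{\rm irr}_\pm\subset O_{N-1}^+$, Corollary~\ref{C.Tannaka} identifies the corresponding linear category of partitions as $\V_{(N,\pm)}\Kat$.
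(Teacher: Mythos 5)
Your proposal is correct and takes essentially the same route as the paper's proof: both reduce, via the blockwise definition of $\V_{(N,\pm)}$, to block partitions with lower points only, split the block-value sum into $c\le N-1$ and $c=N$, expand the product, and match the outcome against Definition~\ref{D.VK} (your observation that the all-singleton term forces the convention $b_{k,k}=(N-1)\singleton^{\otimes k}$ is exactly the point the paper highlights), before concluding with Lemma~\ref{L.VTV}. The only difference is presentational: the paper verifies the index-symmetric $\delta$-function identity \eqref{eq.dVp}, under which the passage to lower-points-only partitions is automatic, whereas your operator-level formulation needs the explicit cup/cap rotation argument you supply --- which is valid, since $V$ is real and hence the cap over $\C^N$ trades a $V^*$ on one leg for a $V$ on the other.
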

\begin{proof}
To simplify the notation in the proof, let us denote $V:=V_{(N,\pm)}$ and $\V:=\V_{(N,\pm)}$.

To prove the first part, it is enough to show for $p\in\Lart\nlin(k,l)$ that
\begin{equation}
\label{eq.dVp}
\delta_{\V p}(\mathbf{i},\mathbf{j})=\sum_{t_1,\dots,t_k=1}^{N}\sum_{s_1,\dots,s_l=1}^{N}V_{i_1t_1}\cdots V_{i_kt_k}V_{j_1s_1}\cdots V_{j_ls_l}\delta_p(\mathbf{t},\mathbf{s})
\end{equation}
since then we can compute
\begin{align*}
&T_{\V p}(e_{i_1}\otimes\cdots\otimes e_{i_k})=\sum_{j_1,\dots,j_l=1}^{N-1}\delta_{\V p}(\mathbf{i},\mathbf{j})(e_{j_1}\otimes\cdots\otimes e_{j_l})\\
&=\sum_{j_1,\dots,j_l=1}^{N-1}\sum_{t_1,\dots,t_k=1}^{N}\sum_{s_1,\dots,s_l=1}^{N}V_{i_1t_1}\cdots V_{i_kt_k}V_{j_1s_1}\cdots V_{j_ls_l}\delta_p(\mathbf{t},\mathbf{s})(e_{j_1}\otimes\dots\otimes e_{j_l})\\
&=V^{\otimes l}T_p V^{*\,\otimes k}(e_{i_1}\otimes\cdots\otimes e_{i_k}).
\end{align*}

Since $\V$ acts blockwise, it is enough to prove Equation \eqref{eq.dVp} for block partitions with no upper points $b_l\in\Lart(0,l)$. First, note that
$$\sum_{s=1}^{N-1}\sum_{\substack{\{\alpha_1,\dots,\alpha_{k-i}\}\\\subset\{1,\dots,l\}}}\delta_{s=j_{\alpha_1}=\cdots=j_{\alpha_{l-i}}}=\delta_{b_{l,i}}(\mathbf{j})$$
for all $i=0,\dots,k$ and for all $j_1,\dots,j_{l}\in\{1,\dots,N-1\}$. Note in particular the case $i=l$, where the left-hand side equals to
$$\sum_{s=1}^{N-1}1=N-1=(N-1)\delta_{\singleton^{\otimes l}}(\mathbf{j})=\delta_{b_{l,l}}(\mathbf{j}),$$
so the factor $N-1$ indeed appears.

Now, we can compute for any $j_1,\dots,j_k\in\{1,\dots,N-1\}$
\begin{align*}
&\sum_{s_1,\dots,s_l=1}^{N}V_{j_1s_1}\cdots V_{j_ls_l}\delta_{b_l}(0,\mathbf{s})=
\sum_{s=1}^{N}V_{j_1s}\cdots V_{j_ls}=\\
&=\sum_{s=1}^{N-1} V_{j_1s}\cdots V_{j_l s}+V_{j_1N}\cdots V_{j_l N}=\\
&=\sum_{s=1}^{N-1}\prod_{\alpha=1}^l\left(\delta_{j_\alpha s}-\frac{1}{N-1}\left(1\pm\frac{1}{\sqrt{N}}\right)\right)+\left(\frac{\pm 1}{\sqrt{N}}\right)^l=\\
&=\sum_{s=1}^{N-1}\sum_{i=0}^l\sum_{\substack{\{\alpha_1,\dots,\alpha_{l-i}\}\\\subset\{1,\dots,l\}}}\left(-\frac{1}{N-1}\left(1\pm\frac{1}{\sqrt{N}}\right)\right)^i\delta_{s=j_{\alpha_1}=\dots=j_{\alpha_{l-i}}}+\left(\frac{\pm 1}{\sqrt{N}}\right)^l=\\
&=\sum_{i=0}^l\left(-\frac{1}{N-1}\left(1\pm\frac{1}{\sqrt{N}}\right)\right)^i\delta_{b_{l,i}}(\mathbf{j})+\left(\frac{\pm 1}{\sqrt{N}}\right)^l=\delta_{\V b_l}(\mathbf{j}).
\end{align*}

Finally, from Lemma~\ref{L.VTV} it follows that indeed the set $\V_{(N,\pm)}\Kat$ corresponds to the quantum group $G_\pm^{\rm irr}=(C(G_\pm^{\rm irr}),v)$ in the sense that $\Mor(v^{\otimes k},v^{\otimes l})=\{T_q\mid q\in\V_{(N,\pm)}\Kat\}$.
\end{proof}

\begin{rem}
Since the functor $T_\bullet$ mapping $q\mapsto T_q$ is not injective, it might be that $\V_{(N,\pm)}\Kat$ is not a category. However, from Proposition~\ref{P.Viso} it will follow that $\V_{(N,\pm)}\Kat$ is indeed a linear category of partitions.
\end{rem}

\begin{rem}
\label{R.Vfunct}
The map $\V_{(N,\pm)}$ is not a functor. We have for example
$$(\V_{(N,\pm)}(\upsingleton\otimes\idpart\otimes\idpart))\cdot(\V_{(N,\pm)}(\singleton\otimes\pairpart))=0\cdot 0=0,$$
$$\V_{(N,\pm)}((\upsingleton\otimes\idpart\otimes\idpart)\cdot(\singleton\otimes\pairpart))=\V_{(N,\pm)}\pairpart=\pairpart.$$
\end{rem}

\begin{rem}
Let us try to give some explanation for the explicit form of the map $\V_{(N,\pm)}$. We give an alternative definition of this map, which is less convenient for practical computations, but it is maybe a bit simpler to understand, and sketch the corresponding proof of Theorem~\ref{T.VK}.

First, note that the map $V_{(N,\pm)}$ can be written as a composition $V_{(N,\pm)}=T_\upsilon B$, where $\upsilon=\idpart-\frac{1}{N-1}\left(1\pm\frac{1}{\sqrt{N}}\right)\disconnecterpart\in\Lart\nnnlin(1,1)$, so $T_\upsilon\in M_{N-1}(\C)$, and $B$ is a $(N-1)\times N$ matrix with entries $B_{ij}=\delta_{ij}-\delta_{Nj}$.

Now, we can define the map $\V_{(N,\pm)}$ in two steps as well. First, one can prove that for any linear combination of partitions $p\in\Lart\nlin(k,l)$ we have that $B^{\otimes l}T_pB^{*\,\otimes k}=T_{\mathcal{B}p}$, where $\mathcal{B}p\in\Lart\nnnlin(k,l)$ was made from $p$ by replacing every block $b_j$ by the linear combination $b_j-\singleton^{\otimes j}$. As a consequence, we have that $V^{\otimes l}T_pV^{*\,\otimes k}=T_{\V p}$, where we define $\V p:=\upsilon^{\otimes l}(\mathcal{B}p)\upsilon^{\otimes k}$. Noticing that $\upsilon\singleton=\mp\frac{1}{\sqrt{N}}\singleton$, it is easy to check that this definition of $\V$ indeed coincides with Definition \ref{D.VK}. 
\end{rem}

\begin{ex}
\label{ex.Bn}
As an immediate application of Theorem~\ref{T.VK}, let us consider the case $G=B_N^+$. This quantum group corresponds to the category $\Kat=\langle\singleton\rangle\nlin$ spanned by all non-crossing partitions with blocks of size at most two. Since $\V_{(N,\pm)}$ acts blockwise, we can see from Example \ref{ex.V} that it acts as the identity for all pair partitions (partitions with all blocks of size two). On the other hand, as we mentioned in Lemma~\ref{L.VV}, any partition containing singleton is mapped to zero. Thus, we have that $\V_{(N,\pm)}\Kat$ is the category spanned by all non-crossing pair partitions. According to Theorem \ref{T.VK}, the quantum group $G_\pm^{\rm irr}$ corresponds to the category $\V_{(N,\pm)}\Kat$, so it must be the free orthogonal quantum group $O_{N-1}^+$.
\end{ex}

\section{The subrepresentation within the $N$-dimensional setting}
\label{sec.projection}

\subsection{Projection instead of the partial isometry}
Recall the notation $P_{(N)}$ for the orthogonal projection onto the $(N-1)$-dimensional invariant subspace $\spanlin\{\xi\}^\perp$. Consider a quantum group $G=(C(G),u)$ such that $S_N\subset G\subset B_N^{\# +}$. In the previous section, we defined the quantum group $G^{\rm irr}_\pm=(C(G^{\rm irr}_\pm),v)$ generated by the $(N-1)$-dimensional subrepresentation. Now, it may be useful to go back to the dimension $N$ and study the matrix $V_{(N,\pm)}^*vV_{(N,\pm)}=P_{(N)}uP_{(N)}$.

Note that the matrix $P_{(N)}uP_{(N)}$ does not generate a compact matrix quantum group $P_{(N)}GP_{(N)}$ according to our definition since it is not invertible. Nevertheless, it can be seen as a quantum group isomorphic to $G^{\rm irr}_\pm$ acting on the $(N-1)$-dimensional subspace $P_{(N)}\C^N\subset\C^N$.

We are going to use the linear combinations of partitions to study the following category.

\begin{defn}
We denote by $\Reptil P_{(N)}GP_{(N)}$ the monoidal $*$-category with the set of natural numbers with zero, $\N_0$, as objects and the following sets as morphisms
$$\Mor((P_{(N)}uP_{(N)})^{\otimes k},(P_{(N)}uP_{(N)})^{\otimes l}):=\{P_{(N)}^{\otimes l}TP_{(N)}^{\otimes k}\mid T\in\Mor(u^{\otimes k},u^{\otimes l})\}$$
between given objects $k$ and $l$. The operations are defined in the standard way. For every object $k\in\N_0$ there is the identity morphism $P^{\otimes k}$.
\end{defn}

\begin{prop}
\label{P.repiso}
The category $\Reptil PGP$ is monoidally $*$-isomorphic to $\Reptil G^{\rm irr}_\pm$ via
$$P_{(N)}^{\otimes l}TP_{(N)}^{\otimes k}\mapsto V_{(N,\pm)}^{\otimes l}P_{(N)}^{\otimes l}TP_{(N)}^{\otimes k}V_{(N,\pm)}^{*\,\otimes k}=V_{(N,\pm)}^{\otimes l}TV_{(N,\pm)}^{*\,\otimes k}.$$
\end{prop}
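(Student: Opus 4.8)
The plan is to write down an explicit inverse for the proposed map and then check that it and its inverse preserve all of the categorical operations; everything reduces to the two relations $VV^*=1_{\C^{N-1}}$ and $V^*V=P$ from Lemma~\ref{L.V} (I abbreviate $V:=V_{(N,\pm)}$ and $P:=P_{(N)}$). First I would record the consequences $VP=VV^*V=V$ and $PV^*=V^*VV^*=V^*$, and tensor them to obtain $V^{\otimes l}P^{\otimes l}=V^{\otimes l}$ and $P^{\otimes k}V^{*\,\otimes k}=V^{*\,\otimes k}$. This is exactly the second equality asserted in the statement, and it shows simultaneously that the value $V^{\otimes l}P^{\otimes l}TP^{\otimes k}V^{*\,\otimes k}$ depends only on the morphism $P^{\otimes l}TP^{\otimes k}$ and not on the chosen lift $T$, so the map, call it $\Phi$, is well defined; it is manifestly linear.

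Next I would establish bijectivity by exhibiting the inverse $\Psi\colon S\mapsto V^{*\,\otimes l}SV^{\otimes k}$. Surjectivity of $\Phi$ is immediate from Lemma~\ref{L.VTV}, which says the morphisms of $\Reptil G^{\rm irr}_\pm$ are precisely the operators $V^{\otimes l}TV^{*\,\otimes k}$ with $T\in\Mor(u^{\otimes k},u^{\otimes l})$. Using $V^{*\,\otimes l}V^{\otimes l}=P^{\otimes l}$ one checks $\Psi(\Phi(P^{\otimes l}TP^{\otimes k}))=P^{\otimes l}TP^{\otimes k}$, and via $VP=V$, $PV^*=V^*$ one checks $\Phi(\Psi(V^{\otimes l}TV^{*\,\otimes k}))=V^{\otimes l}TV^{*\,\otimes k}$; hence $\Phi$ and $\Psi$ are mutually inverse linear bijections.

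Then I would verify compatibility with the three operations. The one point needing a little care is composition: the composite of two morphisms of $\Reptil PGP$ must again be of the prescribed form. This uses that $P=\frac1N T_{\disconnecterpart}=V^*V$ is itself an intertwiner, being the orthogonal projection onto the invariant subspace $\spanlin\{\xi\}^\perp$, so that $P^{\otimes l}\in\Mor(u^{\otimes l},u^{\otimes l})$. Thus for $S_1=P^{\otimes m}T_1P^{\otimes l}$ and $S_2=P^{\otimes l}T_2P^{\otimes k}$ the product is $S_1S_2=P^{\otimes m}(T_1P^{\otimes l}T_2)P^{\otimes k}$ with $T_1P^{\otimes l}T_2\in\Mor(u^{\otimes k},u^{\otimes m})$, and applying $\Phi$ while inserting $V^{*\,\otimes l}V^{\otimes l}=P^{\otimes l}$ yields $\Phi(S_1S_2)=V^{\otimes m}T_1P^{\otimes l}T_2V^{*\,\otimes k}=\Phi(S_1)\Phi(S_2)$. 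Compatibility with the tensor product follows from $V^{\otimes a}\otimes V^{\otimes b}=V^{\otimes(a+b)}$, $P^{\otimes a}\otimes P^{\otimes b}=P^{\otimes(a+b)}$ and the interchange law; compatibility with the involution follows from $P^*=P$ and $(V^{*\,\otimes k})^*=V^{\otimes k}$; and the identity $P^{\otimes k}$ of $\Reptil PGP$ is sent to $(VV^*)^{\otimes k}=1$, the identity of $\Reptil G^{\rm irr}_\pm$.

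I do not expect a genuine obstacle here: the content is carried entirely by Lemmas~\ref{L.V} and~\ref{L.VTV}, and the only thing one must not overlook is the closure-under-composition point above, namely that $P^{\otimes l}$ lies in the morphism spaces so that products of morphisms remain morphisms.
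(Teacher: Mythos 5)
Your proposal is correct and follows essentially the same route as the paper's own proof: both rely on Lemma~\ref{L.V} ($VV^*=1_{\C^{N-1}}$, $V^*V=P_{(N)}$), exhibit the same explicit inverse $S\mapsto V^{*\,\otimes l}SV^{\otimes k}$, and verify functoriality of the composition by inserting $P^{\otimes l}=V^{*\,\otimes l}V^{\otimes l}$. The extra details you supply (well-definedness, closure of $\Reptil PGP$ under composition via $P^{\otimes l}\in\Mor(u^{\otimes l},u^{\otimes l})$, and the tensor/involution/identity checks) are exactly the steps the paper dismisses as ``straightforward.''
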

\begin{proof}
We just make use of Lemma~\ref{L.V} saying that $V_{(N,\pm)}V_{(N,\pm)}^*=1_{\C^{N-1}}$ and that $V_{(N,\pm)}^*V_{(N,\pm)}=P_{(N)}$.

The map is indeed bijective since we can express the inverse as
$$V_{(N,\pm)}^{\otimes l}TV_{(N,\pm)}^{*\,\otimes k}\mapsto V_{(N,\pm)}^{*\,\otimes l}V_{(N,\pm)}^{\otimes l}TV_{(N,\pm)}^{*\,\otimes k}V_{(N,\pm)}^{\otimes k}=P_{(N)}^{\otimes l}TP_{(N)}^{\otimes k}.$$

Checking the functoriality is also straightforward. Let us check for example the composition:
\begin{align*}
(P_{(N)}^{\otimes m}T_2P_{(N)}^{\otimes l})(P_{(N)}^{\otimes l}T_1P_{(N)}^{\otimes k})\mapsto &V_{(N,\pm)}^{\otimes l}P_{(N)}^{\otimes m}T_2P_{(N)}^{\otimes l}T_1P_{(N)}^{\otimes k}V_{(N,\pm)}^{*\,\otimes k}\\&=V_{(N,\pm)}^{\otimes m}T_2V_{(N,\pm)}^{*\,\otimes l}V_{(N,\pm)}^{\otimes l}T_1V_{(N,\pm)}^{*\,\otimes k}.\qedhere
\end{align*}
\end{proof}

So, on one hand, the category $\Reptil PGP$ provides an alternative description of the quantum group $G^{\rm irr}$. On the other hand note that $P_{(N)}$ is an intertwiner of $u$, so the category $\Reptil PGP$ forms a subset of $\Reptil G$. Adding the proper intertwiners to this category, we are able to reconstruct the category of representations of $G$ and hence the whole quantum group and we are also able to produce new examples of quantum groups.

Our goal now is to transform those considerations into the simpler setting of linear combinations of partitions.

\subsection{The projective partition $\pi_{(N)}$}
\label{secc.pi}

\begin{defn}
Let us define the following linear combination
$$\pi_{(N)}:=\idpart-{1\over N}\disconnecterpart\in\Lart\nlin(1,1).$$
\end{defn}

This partition is \emph{projective} in the sense that we have $\pi_{(N)}\pi_{(N)}=\pi_{(N)}$ and $\pi_{(N)}^*=\pi_{(N)}$ (the definition of a projective partition originally appeared in \cite{FW16}). Therefore, also the map $T_{\pi_{(N)}}$ must be an orthogonal projection.

\begin{lem}
\label{L.Ppi}
$T_{\pi_{(N)}}$ equals to $P_{(N)}$ the orthogonal projection onto $\spanlin\{\xi\}^\perp$.
\end{lem}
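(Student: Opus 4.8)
The plan is to reduce everything to the additivity of the functor $T_\bullet$ and to an explicit description of the two partitions appearing in $\pi_{(N)}$. Since $\pi_{(N)}=\idpart-\frac{1}{N}\disconnecterpart$ and $T_\bullet$ is linear, we get at once
$$T_{\pi_{(N)}}=T_{\idpart}-\frac{1}{N}T_{\disconnecterpart},$$
so it suffices to identify each summand.

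First I would observe that $T_{\idpart}$ is the identity operator on $\C^N$, since $\delta_{\idpart}(i,j)=\delta_{ij}$ and hence $T_{\idpart}e_i=e_i$. Next I would compute $T_{\disconnecterpart}$. The partition $\disconnecterpart$ has its upper and its lower point in two different (singleton) blocks, so no equality of indices is imposed and $\delta_{\disconnecterpart}(i,j)=1$ for all $i,j$. Thus $T_{\disconnecterpart}e_i=\sum_{j=1}^N e_j=\xi$ for every $i$; equivalently $T_{\disconnecterpart}=\xi\xi^*$ is the rank-one operator whose matrix has all entries equal to $1$. This is precisely the computation already used in the proof of Lemma~\ref{L.reducible}, where $\frac{1}{N}T_{\disconnecterpart}$ was identified as the orthogonal projection onto $\spanlin\{\xi\}$, so one could alternatively just cite that fact.

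Combining these two facts, linearity yields
$$T_{\pi_{(N)}}=\mathrm{id}_{\C^N}-\frac{1}{N}\,\xi\xi^*.$$
Since $\|\xi\|^2=\xi^*\xi=N$, the operator $\frac{1}{N}\xi\xi^*=\xi\xi^*/\|\xi\|^2$ is exactly the orthogonal projection onto $\spanlin\{\xi\}$; hence $T_{\pi_{(N)}}=\mathrm{id}-\xi\xi^*/\|\xi\|^2$ is the orthogonal projection onto the orthogonal complement $\spanlin\{\xi\}^\perp$, which by definition is $P_{(N)}$. There is essentially no obstacle here: the only point requiring a little care is matching the normalizing factor $\frac{1}{N}$ with $\|\xi\|^2=N$, which is exactly what turns the rank-one term into a genuine projection. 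The projectivity of $\pi_{(N)}$ recorded just before the lemma (namely $\pi_{(N)}\pi_{(N)}=\pi_{(N)}$ and $\pi_{(N)}^*=\pi_{(N)}$, forcing $T_{\pi_{(N)}}$ to be an orthogonal projection) is consistent with this and can serve as an independent sanity check.
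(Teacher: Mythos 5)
Your proposal is correct and follows essentially the same route as the paper: both identify $\frac{1}{N}T_{\disconnecterpart}=\frac{1}{N}\xi\xi^*$ as the orthogonal projection onto $\spanlin\{\xi\}$ and conclude by subtracting it from the identity. The only cosmetic difference is that you compute $T_{\disconnecterpart}$ directly from the $\delta$-function definition, while the paper factors it as $T_{\singleton}T_{\upsingleton}$ via functoriality.
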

\begin{proof}
We have that $\frac{1}{N}T_{\disconnecterpart}=\frac{1}{N}T_{\singleton} T_{\upsingleton}=\frac{1}{N}\xi\xi^*$ is the orthogonal projection onto $\spanlin\{\xi\}$. Therefore, $T_{\pi_{(N)}}=1-\frac{1}{N}T_{\disconnecterpart}$ is the projection onto the orthogonal complement.
\end{proof}

\begin{defn}
\label{D.P}
We define a linear map $\PP_{(N)}\colon\Lart\nlin\to\Lart\nlin$ acting as $\PP_{(N)}p=\pi_{(N)}^{\otimes l}p\pi_{(N)}^{\otimes k}$ for all $p\in\Lart\nlin(k,l)$.
\end{defn}

\begin{ex}
\label{ex.P}
As an example, let us compute the action of $\PP_{(N)}$ on small block partitions (cf. Example \ref{ex.V}):
\begin{align*}
\PP_{(N)}\singleton&=0,\\
\PP_{(N)}\pairpart&=\pairpart-\frac{1}{N}\singleton\otimes\singleton=\Lrot\pi_{(N)},\\
\PP_{(N)}\Laaa&=\Laaa-\frac{1}{N}(\Laab+\Laba+\Labb)+\frac{2}{N^2}\Labc,\\
\PP_{(N)}\Laaaa&=\Laaaa-{1\over N}(\Laaab+\Laaba+\Labaa+\Labbb)+\\&+{1\over N^2}(\Laabc+\Labac+\Labca+\Labbc+\Labcb+\Labcc)+\frac{3}{N^3}\Labcd.
\end{align*}
\end{ex}

\begin{rem}
\label{R.P}
Let us make the following remarks on the action of $\PP_{(N)}$.
\begin{enumerate}
\item The operator $\PP_{(N)}$ acts (similarly as $\V_{(N)}$) by cutting legs from blocks. In particular, for any $p\in\Lart\nlin$ we have that $\PP_{(N)}p=p+q$, where $q$ is a linear combination of partitions containing at least one singleton.
\item For any linear combination $p$ of partitions containing a singleton, we have $\PP_{(N)}p=0$. This follows from the fact that $\pi_{(N)}\singleton=0$.
\item Using the same counterexample as in Remark \ref{R.Vfunct}, we can prove that $\PP_{(N)}$ is not a functor.
\end{enumerate}
\end{rem}

If we consider a linear category $\Kat$ such that $\singleton\otimes\singleton\in\Kat$, then $\PP_{(N)}\Kat\subset\Kat$. The set $\PP_{(N)}$ corresponds to intertwiners of the form $T_{\PP_{(N)}p}=P^{\otimes l}T_pP^{\otimes k}$, i.e.\ elements of the category $\Reptil PGP$. In the following subsection, we are going to describe the sets $\PP_{(N)}\Kat$ in an abstract way giving them also a structure of a category.

\subsection{Reduced linear categories of partitions}

\begin{defn}
\label{D.red}
A vector subspace $\Kat\red\subset\Lart\nlin$ is called \emph{reduced linear category of partitions} if
\begin{itemize}
\item $\pi_{(N)},\Lrot\pi_{(N)},\Rrot\pi_{(N)}\in\Kat\red$,
\item $\PP_{(N)}\Kat\red=\Kat\red$,
\item $\Kat\red$ is closed under the category operations (i.e.\ tensor product, composition, and involution).
\end{itemize}
\end{defn}

Any reduced linear category indeed forms a monoidal $*$-category. The identity morphism is $\pi_{(N)}^{\otimes k}\in\Lart\nlin(k,k)$ for every $k\in\N_0$.

\begin{lem}
Any reduced linear category of partitions is closed under left and right rotation.
\end{lem}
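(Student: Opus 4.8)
The plan is to realize left rotation on $\Kat\red$ by the very same categorical formula that realizes it on the full category $\Lart\nlin$, but with the ordinary identity and cup replaced by their reduced counterparts $\pi_{(N)}$ and $\Lrot\pi_{(N)}$, both of which belong to $\Kat\red$ by definition. The point is that $\Kat\red$ is closed under tensor product and composition, so any expression built from $\pi_{(N)}$, $\Lrot\pi_{(N)}$ and a given $p\in\Kat\red$ automatically stays inside $\Kat\red$; the work is to check that such an expression actually computes $\Lrot p$.

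First I would record that $\PP_{(N)}$ restricts to the identity on $\Kat\red$. Indeed $\PP_{(N)}$ is idempotent (because $\pi_{(N)}$ is a projective partition, so $\PP_{(N)}^2p=\pi_{(N)}^{\otimes l}\pi_{(N)}^{\otimes l}\,p\,\pi_{(N)}^{\otimes k}\pi_{(N)}^{\otimes k}=\PP_{(N)}p$), and $\PP_{(N)}\Kat\red=\Kat\red$; hence every $p\in\Kat\red$ equals $\PP_{(N)}q$ for some $q$, so $\PP_{(N)}p=\PP_{(N)}^2q=\PP_{(N)}q=p$. Concretely, for $p\in\Kat\red(k,l)$ this yields $\pi_{(N)}^{\otimes l}p=p=p\,\pi_{(N)}^{\otimes k}$. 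Next I would recall the standard combinatorial identity behind closure of $\Lart\nlin$ under rotation (the one used in \cite[Lemma 2.7]{BS09}): for $p\in\Lart\nlin(k,l)$ with $k\ge 1$,
$$\Lrot p=(\idpart\otimes p)\,(\pairpart\otimes\idpart^{\otimes(k-1)}).$$

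Starting from this, I would substitute $p=p\,\pi_{(N)}^{\otimes k}$ and apply the interchange law $(a\otimes b)(c\otimes d)=(ac)\otimes(bd)$ to push the projections onto the cup, rewriting the right-hand side as $(\idpart\otimes p)\big(((\idpart\otimes\pi_{(N)})\pairpart)\otimes\pi_{(N)}^{\otimes(k-1)}\big)$. Two auxiliary identities then finish the job: $(\idpart\otimes\pi_{(N)})\pairpart=\Lrot\pi_{(N)}$ (applying the projection to a single leg of the cup already produces the reduced cup, since $\idpart-\pi_{(N)}=\tfrac1N\disconnecterpart$ projects onto $\spanlin\{\xi\}$ and the cup is $\xi$-symmetric), and $\pi_{(N)}\singleton=0$, equivalently $\disconnecterpart\,\pi_{(N)}=0$ (the projection kills the $\xi$-direction). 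The first identity turns the formula into $\Lrot p=(\idpart\otimes p)(\Lrot\pi_{(N)}\otimes\pi_{(N)}^{\otimes(k-1)})$; the second lets me replace the leading $\idpart$ by $\pi_{(N)}$ at no cost, because the correction $\idpart-\pi_{(N)}=\tfrac1N\disconnecterpart$ composes with the $\pi_{(N)}$-carrying first leg of $\Lrot\pi_{(N)}$ to give $\disconnecterpart\,\pi_{(N)}=0$. The upshot is the closed formula
$$\Lrot p=(\pi_{(N)}\otimes p)\,\big(\Lrot\pi_{(N)}\otimes\pi_{(N)}^{\otimes(k-1)}\big),$$
whose right-hand side is a tensor and composition of elements of $\Kat\red$, hence lies in $\Kat\red$. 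Closure under right rotation then follows either by the mirror computation using the reduced cap $\Rrot\pi_{(N)}$, or by applying the involution $*$, under which $\Kat\red$ is closed and which exchanges left and right rotation.

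The hard part is precisely the passage from the ordinary building blocks $\idpart,\pairpart$ to the reduced ones $\pi_{(N)},\Lrot\pi_{(N)}$ inside the rotation formula: everything hinges on the two small facts $(\idpart\otimes\pi_{(N)})\pairpart=\Lrot\pi_{(N)}$ and $\disconnecterpart\,\pi_{(N)}=0$, i.e. on $\pi_{(N)}$ being exactly the projection complementary to the $\xi$-direction carried by the cup. I expect that once these are verified (each is a one-line check, either combinatorially or via $T_{\pi_{(N)}}=P_{(N)}$ from Lemma~\ref{L.Ppi}), the remainder is routine bookkeeping with the interchange law, and one must only be slightly careful that reducedness is invoked through $p=p\,\pi_{(N)}^{\otimes k}$ rather than assumed where it is not available.
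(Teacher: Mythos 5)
Your proof is correct and is essentially the paper's own argument written out in full: the paper's proof consists of the single remark that one should repeat the rotation argument of \cite[Lemma 2.7]{BS09} with $\pi_{(N)}$, $\Lrot\pi_{(N)}$, $\Rrot\pi_{(N)}$ in place of $\idpart$, $\pairpart$, $\uppairpart$, and your closed formula
$$\Lrot p=(\pi_{(N)}\otimes p)\bigl(\Lrot\pi_{(N)}\otimes\pi_{(N)}^{\otimes(k-1)}\bigr),$$
together with the identities $\disconnecterpart\,\pi_{(N)}=0$, $(\idpart\otimes\pi_{(N)})\pairpart=\Lrot\pi_{(N)}$ and $p\,\pi_{(N)}^{\otimes k}=p=\pi_{(N)}^{\otimes l}p$ for $p\in\Kat\red(k,l)$, is exactly that substitution made explicit.

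One side remark is wrong, although it does not break the proof: the involution does \emph{not} exchange left and right rotation. Since $*$ reflects across the horizontal axis while preserving the left-to-right order of points, one has $(\Lrot p)^*=\Lrot^{-1}(p^*)$, so closure under $*$ and $\Lrot$ only yields closure under the \emph{inverse} left rotation, not under $\Rrot$, which acts at the opposite end of the diagram. For the right rotation you must therefore use your first alternative, the mirror computation with the reduced cap, namely $\Rrot p=\bigl(\pi_{(N)}^{\otimes(l-1)}\otimes\Rrot\pi_{(N)}\bigr)(p\otimes\pi_{(N)})$, which goes through verbatim using $\pi_{(N)}\disconnecterpart=0$ and $\uppairpart(\pi_{(N)}\otimes\idpart)=\Rrot\pi_{(N)}$.
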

\begin{proof}
The proof is the same as in the case of ordinary categories of partitions. We just have to use $\pi_{(N)}$, $\Lrot\pi_{(N)}$ and $\Rrot\pi_{(N)}$ instead of $\idpart$, $\pairpart$ and $\uppairpart$.
\end{proof}

\begin{rem}
\label{R.red}
From Remark \ref{R.P}(2) it follows that every reduced category $\Kat\red$ does not contain any linear combination of the form $q\otimes\singleton$. In particular, we have $\singleton\otimes\singleton\not\in\Kat\red$ for every reduced category $\Kat\red$. On the other hand, if we complete it to an ordinary linear category of partitions, we always have $\singleton\otimes\singleton\in\langle\Kat\red\rangle\nlin$ since $\singleton\otimes\singleton=N(\pairpart-\Lrot\pi_{(N)})$.
\end{rem}

\begin{prop}
\label{P.Pred}
Let $\Kat$ be a linear category of partitions such that $\singleton\otimes\singleton\in\Kat$. Then $\PP_{(N)}\Kat$ is a reduced category.
\end{prop}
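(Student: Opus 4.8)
The plan is to verify directly the three defining properties of a reduced linear category (Definition~\ref{D.red}) for $\PP_{(N)}\Kat$, exploiting throughout that $\pi_{(N)}$ is a projective partition. The first thing I would record is that $\PP_{(N)}$ is an idempotent linear map. Since $\pi_{(N)}\pi_{(N)}=\pi_{(N)}$ and composition distributes over tensor products by the interchange law, one has $\pi_{(N)}^{\otimes l}\pi_{(N)}^{\otimes l}=\pi_{(N)}^{\otimes l}$, whence $\PP_{(N)}^2p=\pi_{(N)}^{\otimes l}\pi_{(N)}^{\otimes l}\,p\,\pi_{(N)}^{\otimes k}\pi_{(N)}^{\otimes k}=\PP_{(N)}p$ for every $p\in\Lart\nlin(k,l)$. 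This immediately settles the second bullet, $\PP_{(N)}\PP_{(N)}\Kat=\PP_{(N)}\Kat$, and since $\PP_{(N)}$ is linear and $\Kat$ is a vector subspace, $\PP_{(N)}\Kat$ is a vector subspace as well.

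For the generating elements I would first note that $\pi_{(N)}\in\Kat$: the hypothesis $\singleton\otimes\singleton\in\Kat$ gives $\disconnecterpart\in\Kat$ by rotation, and $\idpart\in\Kat$ by definition, so $\pi_{(N)}=\idpart-\frac1N\disconnecterpart\in\Kat$. Idempotency then yields $\PP_{(N)}\pi_{(N)}=\pi_{(N)}$, hence $\pi_{(N)}\in\PP_{(N)}\Kat$. For the rotations I would invoke the computation of Example~\ref{ex.P}, namely $\Lrot\pi_{(N)}=\PP_{(N)}\pairpart$, together with the analogous identity $\Rrot\pi_{(N)}=\PP_{(N)}\uppairpart$; since the pair partitions $\pairpart,\uppairpart$ always lie in $\Kat$, both rotations lie in $\PP_{(N)}\Kat$.

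What remains is closure under the category operations. Involution is immediate from $\pi_{(N)}^*=\pi_{(N)}$, giving $(\PP_{(N)}p)^*=\PP_{(N)}(p^*)$ with $p^*\in\Kat$. Tensor products follow from the interchange law, since $(\PP_{(N)}p)\otimes(\PP_{(N)}q)=\pi_{(N)}^{\otimes(l+l')}(p\otimes q)\pi_{(N)}^{\otimes(k+k')}=\PP_{(N)}(p\otimes q)$ and $p\otimes q\in\Kat$.

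The one step requiring an idea is composition, and this is where I expect the main obstacle. Writing out $(\PP_{(N)}q)(\PP_{(N)}p)$ for $p\in\Kat(k,l)$, $q\in\Kat(l,m)$ and collapsing the two adjacent copies of $\pi_{(N)}^{\otimes l}$ by idempotency leaves
$$(\PP_{(N)}q)(\PP_{(N)}p)=\pi_{(N)}^{\otimes m}\,q\,\pi_{(N)}^{\otimes l}\,p\,\pi_{(N)}^{\otimes k}=\PP_{(N)}\bigl(q\,\pi_{(N)}^{\otimes l}\,p\bigr),$$
which is $\PP_{(N)}(q\,\pi_{(N)}^{\otimes l}\,p)$ rather than $\PP_{(N)}(qp)$ — a stray $\pi_{(N)}^{\otimes l}$ survives in the middle and cannot simply be discarded. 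The resolution is to observe that this factor is harmless inside the argument: because $\pi_{(N)}\in\Kat$ and $\Kat$ is closed under tensor product, $\pi_{(N)}^{\otimes l}\in\Kat$, so $q\,\pi_{(N)}^{\otimes l}\,p\in\Kat$ by closure of $\Kat$ under composition, and therefore $(\PP_{(N)}q)(\PP_{(N)}p)\in\PP_{(N)}\Kat$. This confirms the third bullet and completes the verification that $\PP_{(N)}\Kat$ is a reduced linear category.
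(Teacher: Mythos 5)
Your proof is correct and takes essentially the same approach as the paper's: a direct verification of the three axioms of Definition~\ref{D.red} using the projectivity $\pi_{(N)}\pi_{(N)}=\pi_{(N)}$, $\pi_{(N)}^*=\pi_{(N)}$, where the composition step rests on the same absorption trick — the paper writes $qp=\PP_{(N)}(qp)$ with $qp\in\Kat$ for $p,q\in\PP_{(N)}\Kat$, while you equivalently push the surviving middle factor into the argument, $(\PP_{(N)}q)(\PP_{(N)}p)=\PP_{(N)}(q\,\pi_{(N)}^{\otimes l}\,p)$ with $q\,\pi_{(N)}^{\otimes l}\,p\in\Kat$, both hinging on $\pi_{(N)}\in\Kat$ (i.e.\ $\singleton\otimes\singleton\in\Kat$). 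The only cosmetic difference is that you verify the rotation axioms explicitly via $\Lrot\pi_{(N)}=\PP_{(N)}\pairpart$ and $\Rrot\pi_{(N)}=\PP_{(N)}\uppairpart$, where the paper simply says ``and similarly for its rotations.''
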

\begin{proof}
Since $\idpart\in\Kat$, we have $\pi_{(N)}=\PP_{(N)}\idpart\in\PP_{(N)}\Kat$ and similarly for its rotations.

From the projective property of $\pi_{(N)}$ it follows that $\PP_{(N)}p=p$ for every $p\in\PP_{(N)}\Kat$, which implies that the second axiom holds true.

Moreover, we also have $\pi_{(N)}^{\otimes l}p=p$ and $p\pi_{(N)}^{\otimes k}=p$ for $p\in\PP_{(N)}\Kat(k,l)$, which can be used to prove the third axiom. For example, taking any $p\in\PP_{(N)}\Kat(k,l)$ and $q\in\PP_{(N)}\Kat(l,m)$, we have $qp\in\Kat(k,m)$ since $\PP_{(N)}\Kat\subset\Kat$ and hence also
\[qp=\pi_{(N)}^{\otimes m}qp\pi_{(N)}^{\otimes k}=\PP_{(N)}(qp)\in\PP_{(N)}\Kat(k,m).\qedhere\]
\end{proof}

In particular, the whole set $\PP_{(N)}\Lart\nlin$ forms a reduced category. This allows us to introduce the notation $\langle p_1,\dots,p_n\rangle\nred$ for the smallest reduced category containing $p_1,\dots,p_n\in\PP_{(N)}\Lart\nlin$.

\begin{lem}
\label{L.rednlin}
Let $p_1,\dots,p_n\in\PP_{(N)}\Lart\nlin$. Then
$$\langle\langle p_1,\dots,p_n\rangle\nred\rangle\nlin=\langle p_1,\dots,p_n,\singleton\otimes\singleton\rangle\nlin.$$
\end{lem}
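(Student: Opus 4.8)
The plan is to prove the two inclusions separately, abbreviating $\Kat\red:=\langle p_1,\dots,p_n\rangle\nred$ and $\Kat:=\langle p_1,\dots,p_n,\singleton\otimes\singleton\rangle\nlin$. The whole argument is essentially bookkeeping between the two completion operations, so the real content is choosing the right intermediate object to compare against.

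For the inclusion $\langle\Kat\red\rangle\nlin\subseteq\Kat$, I would first note that it suffices to show $\Kat\red\subseteq\Kat$, since $\Kat$ is already a linear category of partitions and $\langle\Kat\red\rangle\nlin$ is by definition the smallest linear category containing $\Kat\red$. To get $\Kat\red\subseteq\Kat$, I would exhibit a reduced category that sits inside $\Kat$ and contains all the generators $p_i$; the natural candidate is $\PP_{(N)}\Kat$. Because $\singleton\otimes\singleton\in\Kat$, Proposition~\ref{P.Pred} guarantees that $\PP_{(N)}\Kat$ is a reduced category. Each generator lies in it: the assumption $p_i\in\PP_{(N)}\Lart\nlin$ together with the idempotency of $\PP_{(N)}$ (a consequence of $\pi_{(N)}$ being projective) gives $\PP_{(N)}p_i=p_i$, and since $p_i\in\Kat$ we conclude $p_i=\PP_{(N)}p_i\in\PP_{(N)}\Kat$. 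As $\Kat\red$ is the smallest reduced category containing the $p_i$, this yields $\Kat\red\subseteq\PP_{(N)}\Kat\subseteq\Kat$.

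For the reverse inclusion $\Kat\subseteq\langle\Kat\red\rangle\nlin$, I would show that the linear category $\langle\Kat\red\rangle\nlin$ already contains all the defining generators of $\Kat$. It clearly contains $p_1,\dots,p_n$. To see that it contains $\singleton\otimes\singleton$, I would invoke the identity $\singleton\otimes\singleton=N(\pairpart-\Lrot\pi_{(N)})$ from Remark~\ref{R.red}: the pair partition $\pairpart$ belongs to every linear category, and $\Lrot\pi_{(N)}\in\Kat\red\subseteq\langle\Kat\red\rangle\nlin$ by the very definition of a reduced category. Thus $\langle\Kat\red\rangle\nlin$ is a linear category containing all generators $p_i$ and also $\singleton\otimes\singleton$, so minimality of $\Kat$ forces $\Kat\subseteq\langle\Kat\red\rangle\nlin$.

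I do not expect a serious obstacle here. The one point that must be handled carefully is that the correct reduced category to compare against is $\PP_{(N)}\Kat$ rather than $\Kat$ itself, and that the generators $p_i$ are genuinely fixed points of $\PP_{(N)}$; both facts rest on the projectivity of $\pi_{(N)}$ and on Proposition~\ref{P.Pred}. No delicate computation enters, so the only real risk is conflating the two notions of ``smallest category''.
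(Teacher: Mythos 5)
Your proof is correct, and its overall shape (two inclusions, each settled by a minimality argument) matches the paper's. The one place where you genuinely diverge is the inclusion $\langle p_1,\dots,p_n\rangle\nred\subseteq\langle p_1,\dots,p_n,\singleton\otimes\singleton\rangle\nlin$. The paper argues directly: the right-hand side contains $p_1,\dots,p_n$ and the rotations of $\pi_{(N)}$, and is closed under the category operations, hence contains the reduced category they generate. Read literally, this applies minimality of $\langle p_1,\dots,p_n\rangle\nred$ to a set that is not itself a reduced category (it contains $\singleton\otimes\singleton$, which no reduced category does, by Remark~\ref{R.red}), so it implicitly uses that the closure of $\{p_1,\dots,p_n,\pi_{(N)},\Lrot\pi_{(N)},\Rrot\pi_{(N)}\}$ under linear combinations and category operations automatically satisfies the axiom $\PP_{(N)}\Kat\red=\Kat\red$, i.e.\ is itself reduced. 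You avoid this by routing through $\PP_{(N)}\Kat$: Proposition~\ref{P.Pred} certifies that it is a reduced category, idempotency of $\PP_{(N)}$ (from $\pi_{(N)}\pi_{(N)}=\pi_{(N)}$) gives $p_i=\PP_{(N)}p_i\in\PP_{(N)}\Kat$, and minimality of $\Kat\red$ among reduced categories then yields $\Kat\red\subseteq\PP_{(N)}\Kat\subseteq\Kat$. Your version buys a fully rigorous minimality argument at the price of invoking Proposition~\ref{P.Pred}; the paper's is shorter but leaves the $\PP_{(N)}$-stability of the operation-closure implicit. The other inclusion is identical in both proofs: $\langle\Kat\red\rangle\nlin$ contains the $p_i$ and, via the identity $\singleton\otimes\singleton=N(\pairpart-\Lrot\pi_{(N)})$ of Remark~\ref{R.red}, also $\singleton\otimes\singleton$, so minimality of the right-hand side applies.
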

\begin{proof}
On the left-hand side there is a linear category containing $p_1,\dots,p_n,$ and $\singleton\otimes\singleton$, which implies the inclusion $\supset$. The category on the right hand side contains $p_1,\dots,p_n$ and rotations of $\pi_{(N)}$ and it is of course closed under the category operations. So, $\langle p_1,\dots,p_n\rangle\nred\subset\langle p_1,\dots,p_n,\singleton\otimes\singleton\rangle\nlin$, which implies the opposite inclusion $\subset$.
\end{proof}

We have proven that $\PP_{(N)}\Kat$ is a reduced category for any linear category of partitions $\Kat$ containing $\singleton\otimes\singleton$. Now, we prove that all reduced categories are of this form.

\begin{prop}
\label{P.redP}
Let $\Kat\red$ be a reduced category. Then
$$\Kat\red=\PP_{(N)}\langle\Kat\red\rangle\nlin=\PP_{(N)}\langle\Kat\red,\Labac\rangle\nlin=\PP_{(N)}\langle\Kat\red,\singleton\rangle\nlin.$$
\end{prop}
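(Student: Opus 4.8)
The plan is to prove all three equalities at once by squeezing them between $\Kat\red$ on both ends. Since enlarging the generating set enlarges the generated category, and since $\Labac\in\langle\singleton\rangle\nlin\subseteq\langle\Kat\red,\singleton\rangle\nlin$ (recall that $\langle\singleton\rangle\nlin$ is the full category of $S_N$, hence contains every partition), we have the chain
$$\langle\Kat\red\rangle\nlin\subseteq\langle\Kat\red,\Labac\rangle\nlin\subseteq\langle\Kat\red,\singleton\rangle\nlin,$$
and the monotone linear map $\PP_{(N)}$ preserves it. Thus it suffices to establish the two outer inclusions
$$\Kat\red\subseteq\PP_{(N)}\langle\Kat\red\rangle\nlin\qquad\text{and}\qquad\PP_{(N)}\langle\Kat\red,\singleton\rangle\nlin\subseteq\Kat\red,$$
after which everything trapped in between is forced to equal $\Kat\red$. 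The lower inclusion is immediate: because $\pi_{(N)}$ is projective, $\PP_{(N)}$ is idempotent, and combined with the defining property $\PP_{(N)}\Kat\red=\Kat\red$ this gives $\PP_{(N)}p=p$ for every $p\in\Kat\red$; since also $p\in\langle\Kat\red\rangle\nlin$, we get $p=\PP_{(N)}p\in\PP_{(N)}\langle\Kat\red\rangle\nlin$.

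For the upper inclusion I would set $\Cat:=\langle\Kat\red,\singleton\rangle\nlin$ and introduce the subspace $W:=\{p\in\Cat\mid\PP_{(N)}p\in\Kat\red\}$. The goal $\PP_{(N)}\Cat\subseteq\Kat\red$ is exactly $W=\Cat$; as $W\subseteq\Cat$ trivially, it is enough to show that $W$ is a linear category of partitions containing all generators of $\Cat$, for then minimality of $\Cat$ forces $W\supseteq\Cat$. All generators lie in $W$: the generators of $\Kat\red$ are $\PP_{(N)}$-fixed and belong to $\Kat\red$, while $\PP_{(N)}\idpart=\pi_{(N)}$, $\PP_{(N)}\pairpart=\Lrot\pi_{(N)}$, $\PP_{(N)}\uppairpart=\Rrot\pi_{(N)}$ and $\PP_{(N)}\singleton=0$ all lie in $\Kat\red$. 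Closure of $W$ under tensor products and involution is formal, using $\PP_{(N)}(p\otimes q)=\PP_{(N)}p\otimes\PP_{(N)}q$ and $\PP_{(N)}(p^*)=(\PP_{(N)}p)^*$ together with the corresponding closures of $\Kat\red$.

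The main obstacle is closure of $W$ under composition, which is delicate precisely because $\PP_{(N)}$ is not multiplicative (Remark~\ref{R.P}). For composable $p,q$ meeting through $\C^{l}$ I would insert $\idpart^{\otimes l}$ in the middle of $\PP_{(N)}(qp)=\pi_{(N)}^{\otimes m}qp\,\pi_{(N)}^{\otimes k}$ and apply the telescoping identity
$$\idpart^{\otimes l}-\pi_{(N)}^{\otimes l}=\frac{1}{N}\sum_{j=1}^{l}\pi_{(N)}^{\otimes(j-1)}\otimes\disconnecterpart\otimes\idpart^{\otimes(l-j)},$$
which follows from $\idpart-\pi_{(N)}=\frac{1}{N}\disconnecterpart$. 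This expresses $\PP_{(N)}(qp)$ as the ``good'' term $(\PP_{(N)}q)(\PP_{(N)}p)\in\Kat\red$ (closure of $\Kat\red$ under composition, using $p,q\in W$) plus ``defect'' terms, each carrying a single factor $\disconnecterpart=\singleton\,\upsingleton$ in some middle slot. Factoring that slot through the trivial object rewrites each defect term as $\PP_{(N)}$ of a composition through $\C^{l-1}$ in which the $j$-th strand has been cut; here the availability of $\singleton$ and $\upsingleton$ in $\Cat$ is essential, and the relation $\pi_{(N)}\singleton=0$ annihilates precisely those terms in which the cut strand is a mere bystander to the top generator.

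I expect the genuinely delicate point to be organizing this reduction into a well-founded induction. The naive induction on the interface width $l$ does not close directly, since reassociating a defect term reintroduces width-$l$ compositions; the fix is to peel off a single top generator layer, so that the good term drops the layer count while the defect terms either vanish (via $\pi_{(N)}\singleton=0$) or feed a fresh singleton into the top generator, reducing it to a strictly simpler piece that stays inside $\Cat$. Verifying that a suitable lexicographic measure (number of layers, then top-interface width) strictly decreases on every surviving defect term, and that all intermediate diagrams remain in $\Cat$, is the part requiring the most care. Granting it, $W$ is closed under composition, hence $W=\Cat$, which yields $\PP_{(N)}\langle\Kat\red,\singleton\rangle\nlin\subseteq\Kat\red$ and closes the squeeze.
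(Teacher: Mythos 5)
Your squeeze skeleton, the lower inclusion $\Kat\red\subseteq\PP_{(N)}\langle\Kat\red\rangle\nlin$, the generator checks and the tensor/involution closure of $W$ are all fine (and match the paper's strategy of sandwiching the three categories between two copies of $\Kat\red$). The genuine gap is exactly where you flag it yourself: closure of $W=\{p\in\Cat\mid\PP_{(N)}p\in\Kat\red\}$ under composition is never proved — your argument ends with ``Granting it''. This step is the entire content of the proposition, and it cannot be waved through. After your (correct) telescoping expansion, the $j$-th defect term factors through width $l-1$ as
$$\tfrac{1}{N}\bigl[\pi_{(N)}^{\otimes m}q\,(\pi_{(N)}^{\otimes(j-1)}\otimes\singleton\otimes\idpart^{\otimes(l-j)})\bigr]\cdot\bigl[(\pi_{(N)}^{\otimes(j-1)}\otimes\upsingleton\otimes\idpart^{\otimes(l-j)})\,p\,\pi_{(N)}^{\otimes k}\bigr],$$
and neither bracket is of the form $\PP_{(N)}(\,\cdot\,)$, nor is it an object your hypotheses control: knowing $\pi_{(N)}^{\otimes m}q\,\pi_{(N)}^{\otimes l}\in\Kat\red$ says nothing about $q$ with a singleton composed directly onto one leg and bare identities on the others, and you cannot reach the hypothesis by slipping a $\pi_{(N)}$ in front of the singleton, since $\pi_{(N)}\singleton=0$ (that would annihilate the term rather than rewrite it). Your proposed repair — peel off one generator layer of $q$ and run a lexicographic induction — presupposes writing arbitrary elements of $\Cat$ as words in the generators and tracking how singletons propagate through such words under reassociation; none of this is carried out, and it is where all the work lies.

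It is instructive to compare with how the paper sidesteps this dead end. Instead of proving $\PP_{(N)}$-stability composition-by-composition (which fights the non-functoriality of $\PP_{(N)}$ head-on), the paper exhibits an explicit spanning superset: the set $K$ of all rotations of elements $q\otimes\singleton$ built recursively from $\Kat\red$ by attaching singletons. Checking that $\spanlin K$ is a linear category of partitions containing $\Kat\red$, $\Labac$ and $\singleton$ is easy, precisely because composition inside $K$ behaves simply: an added singleton meeting a leg of a $\Kat\red$-element kills the composite (via $\upsingleton\pi_{(N)}=0$, the adjoint of $\pi_{(N)}\singleton=0$), while matched singletons peel off as scalar factors of $N$. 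Then $\PP_{(N)}K=\Kat\red$ because $\PP_{(N)}$ annihilates every partition containing a singleton, and applying $\PP_{(N)}$ to the chain $\Kat\red\subseteq\langle\Kat\red\rangle\nlin\subseteq\langle\Kat\red,\Labac\rangle\nlin\subseteq\langle\Kat\red,\singleton\rangle\nlin\subseteq\spanlin K$ closes the sandwich. If you carried your induction to completion, you would in effect be reproving this structure theorem for $\Cat$; it is simpler to prove it directly. One further small correction: $\langle\singleton\rangle\nlin$ is not the category of $S_N$ (that is the span of \emph{all} partitions) but the category of $B_N^+$, spanned by non-crossing partitions with blocks of size at most two; your conclusion $\Labac\in\langle\singleton\rangle\nlin$ is still true, e.g.\ since $\Labac=\bigl((\idpart\otimes\singleton\otimes\idpart)\pairpart\bigr)\otimes\singleton$, so the chain of inclusions you need is unaffected.
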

\begin{proof}
Denote by $K$ the set of all $p'\in\Lart\nlin$ such that $p'$ was made by adding singletons to some $p\in\Kat\red$. To be more precise, we can formulate this condition recursively: for any $p\in K$ it holds that either $p\in\Kat\red$ or there is $q\in K$ such that $p$ is some rotation of $q\otimes\singleton$ (including the possibility that $q$ is a multiple of the empty partition, so $p=\alpha\singleton\in K$ and $p=\alpha\upsingleton\in K$).

Now, let us prove that $\spanlin K$ is a linear category of partitions. The identity partition is a linear combination of $\pi_{(N)}\in\Kat\red\subset K$ and $\disconnecterpart\in K$, so it is contained in $\spanlin K$. Similarly the pair partitions are also contained in $\spanlin K$. It is clear that $K$ is closed under tensor product and involution, so let us prove it for the composition. Take arbitrary composable $p',q'\in K$, which were made from $p,q\in\Kat\red$ by adding singletons. If the added singletons in the lower row of $p'$ do not exactly match the singletons in the upper row of $q'$, we have $q'p'=0$ since $\pi_{(N)}\singleton=0$. Otherwise it is easy to see that $q'p'$ can be made from $qp$ by adding singletons and multiplying by some factor $N^\alpha$, so $q'p'\in\spanlin K$.

This implies that
$$\Kat\red\subset\langle\Kat\red\rangle\nlin\subset\langle\Kat\red,\Labac\rangle\nlin\subset\langle\Kat\red,\singleton\rangle\nlin\subset\spanlin K.$$

Now, since $\PP_{(N)}p=0$ for any $p$ containing a singleton, we have $\PP_{(N)}K=\PP_{(N)}\Kat\red=\Kat\red$. So, applying $\PP_{(N)}$ on the chain of containments above, we have
$$\Kat\red\subset\PP_{(N)}\langle\Kat\red\rangle\nlin\subset\PP_{(N)}\langle\Kat\red,\Labac\rangle\nlin\subset\PP_{(N)}\langle\Kat\red,\singleton\rangle\nlin\subset\Kat\red,$$
which implies the proposition.
\end{proof}
 
\begin{cor}
\label{C.redP}
For any $p_1,\dots,p_n\in\PP_{(N)}\Lart\nlin$, we have
	$$\langle p_1,\dots,p_n\rangle\nred=\PP_{(N)}\langle p_1,\dots,p_n,\singleton\otimes\singleton\rangle\nlin.$$
\end{cor}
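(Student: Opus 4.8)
The plan is to derive the identity by splicing together two results already established: Proposition~\ref{P.redP}, which writes any reduced category as the $\PP_{(N)}$-image of its linear completion, and Lemma~\ref{L.rednlin}, which computes that linear completion explicitly.

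First I would set $\Kat\red:=\langle p_1,\dots,p_n\rangle\nred$. This notation is legitimate precisely because $\PP_{(N)}\Lart\nlin$ is itself a reduced category (as noted right after Proposition~\ref{P.Pred}), so the smallest reduced category containing $p_1,\dots,p_n$ exists and genuinely is a reduced category. Hence Proposition~\ref{P.redP} applies to it and gives
$$\langle p_1,\dots,p_n\rangle\nred=\Kat\red=\PP_{(N)}\langle\Kat\red\rangle\nlin.$$

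Next I would rewrite the linear completion. Applying Lemma~\ref{L.rednlin} to the generators $p_1,\dots,p_n\in\PP_{(N)}\Lart\nlin$ yields
$$\langle\Kat\red\rangle\nlin=\langle\langle p_1,\dots,p_n\rangle\nred\rangle\nlin=\langle p_1,\dots,p_n,\singleton\otimes\singleton\rangle\nlin,$$
and substituting this into the previous display produces the claimed equality $\langle p_1,\dots,p_n\rangle\nred=\PP_{(N)}\langle p_1,\dots,p_n,\singleton\otimes\singleton\rangle\nlin$. Since the argument is a one-line combination of the two cited statements, there is essentially no obstacle here; all the genuine work --- in particular the ``adding singletons'' combinatorics --- already sits inside the proofs of Proposition~\ref{P.redP} and Lemma~\ref{L.rednlin}. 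The only point worth double-checking is the applicability of Proposition~\ref{P.redP}, i.e.\ that $\langle p_1,\dots,p_n\rangle\nred$ really is a reduced category, which is exactly what the well-definedness of the $\nred$-notation guarantees.
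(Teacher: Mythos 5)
Your proof is correct and is essentially identical to the paper's own argument: both combine Proposition~\ref{P.redP} (applied to $\Kat\red=\langle p_1,\dots,p_n\rangle\nred$) with Lemma~\ref{L.rednlin} to rewrite $\PP_{(N)}\langle\langle p_1,\dots,p_n\rangle\nred\rangle\nlin$ as the claimed right-hand side. Your extra remark on why $\langle p_1,\dots,p_n\rangle\nred$ is well defined (via $\PP_{(N)}\Lart\nlin$ being a reduced category) is a sensible sanity check that the paper leaves implicit.
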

\begin{proof}
	Combining Proposition~\ref{P.redP} and Lemma~\ref{L.rednlin}, we have
\[\langle p_1,\dots,p_n\rangle\nred=\PP_{(N)}\langle\langle p_1,\dots,p_n\rangle\nred\rangle\nlin=\PP_{(N)}\langle p_1,\dots,p_n,\singleton\otimes\singleton\rangle\nlin.\qedhere\]
\end{proof}

\subsection{Relation between $\PP\Kat$ and $\V\Kat$}

Consider a linear category of partitions~$\Kat$ with $\singleton\otimes\singleton\in\Kat$ corresponding to a quantum group $G$ with fundamental representation $u$. In the previous sections we constructed the maps between categories as illustrated by the following diagram. From Lemma~\ref{L.V}, it follows that $VP=V$, so the lower part of the diagram commutes. Moreover, in Proposition~\ref{P.repiso} we proved that the map $\Reptil PGP\to\Reptil VGV^*$ is a monoidal $*$-isomorphism. In this subsection we lift those two properties also to the upper part of the diagram.
\begin{center}
\includegraphics{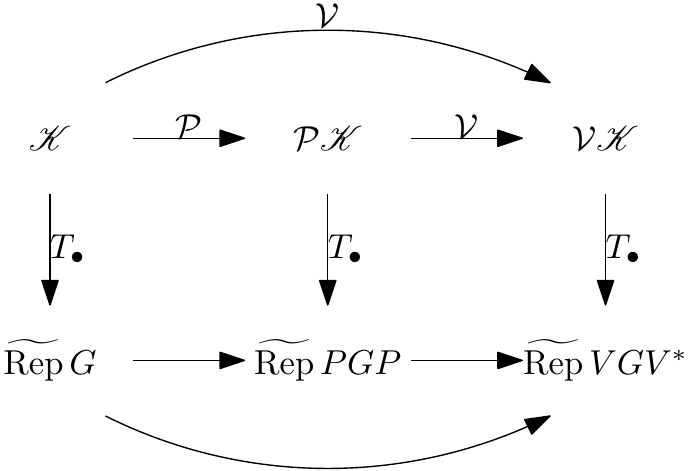}
\end{center}

%

\begin{prop}
\label{P.VP}
It holds that $\V_{(N,\pm)}\PP_{(N)}=\V_{(N,\pm)}$.
\end{prop}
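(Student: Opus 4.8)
The plan is to reduce the statement to the two facts already established about how $\PP_{(N)}$ and $\V_{(N,\pm)}$ treat singletons. Conceptually, both operators act by cutting legs off the blocks of a partition; they produce different coefficients, but the decisive difference is that $\V_{(N,\pm)}$ annihilates every partition carrying a singleton, whereas $\PP_{(N)}$ only creates such singleton-terms as a correction added to the original partition. Consequently the singleton-terms manufactured by $\PP_{(N)}$ are invisible to $\V_{(N,\pm)}$, and the two composite maps must agree.

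Concretely, I would fix $p\in\Lart\nlin(k,l)$ and invoke Remark~\ref{R.P}(1), which gives $\PP_{(N)}p=p+q$, where $q$ is a linear combination of partitions each containing at least one singleton. By Lemma~\ref{L.VV}(2) the kernel of $\V_{(N,\pm)}$ on $\Lart\nlin(k,l)$ is exactly the span of the partitions containing a singleton, so $q\in\ker\V_{(N,\pm)}$ and hence $\V_{(N,\pm)}q=0$. Applying $\V_{(N,\pm)}$ and using its linearity then yields $\V_{(N,\pm)}\PP_{(N)}p=\V_{(N,\pm)}p+\V_{(N,\pm)}q=\V_{(N,\pm)}p$. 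Since $p$ was arbitrary, this is the desired identity of maps $\Lart\nlin\to\Lart\nnnlin$. (The degenerate case in which $p$ already contains a singleton is subsumed: then $\V_{(N,\pm)}p=0$ and $\PP_{(N)}p=0$ by Remark~\ref{R.P}(2), so both sides vanish.)

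There is a parallel ``analytic'' route that explains why the identity is forced, and it also pinpoints the main obstacle. By Lemma~\ref{L.V} one has the matrix identity $V_{(N,\pm)}P_{(N)}=V_{(N,\pm)}$ (equivalently $P_{(N)}V_{(N,\pm)}^*=V_{(N,\pm)}^*$), which is exactly the commutativity of the lower triangle of the diagram; combining it with Theorem~\ref{T.VK} and with $T_{\PP_{(N)}p}=P_{(N)}^{\otimes l}T_pP_{(N)}^{\otimes k}$ (from $T_{\pi_{(N)}}=P_{(N)}$, Lemma~\ref{L.Ppi}, and functoriality of $T_\bullet$) one computes $T_{\V_{(N,\pm)}\PP_{(N)}p}=V_{(N,\pm)}^{\otimes l}P_{(N)}^{\otimes l}T_pP_{(N)}^{\otimes k}V_{(N,\pm)}^{*\,\otimes k}=V_{(N,\pm)}^{\otimes l}T_pV_{(N,\pm)}^{*\,\otimes k}=T_{\V_{(N,\pm)}p}$. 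The obstacle with this route, and the reason I prefer the combinatorial one, is that $T_\bullet$ is \emph{not} injective (Corollary~\ref{C.Tiso} secures injectivity only when the number of points is at most $N$); hence it proves only the equality of the images under $T_\bullet$, not the equality of the linear combinations of partitions that the proposition actually asserts. The genuine content therefore lies in Remark~\ref{R.P}(1), which identifies the correction term of $\PP_{(N)}p$ as a singleton-supported combination, together with Lemma~\ref{L.VV}(2).
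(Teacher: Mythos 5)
Your proof is correct and is essentially identical to the paper's own argument: both decompose $\PP_{(N)}p=p+q$ via Remark~\ref{R.P}(1) and then kill the singleton-supported correction $q$ using the kernel description in Lemma~\ref{L.VV}. Your side remark about the analytic route through $T_\bullet$ (and why its non-injectivity makes that route insufficient) is accurate but not needed, since the combinatorial argument already settles the claim.
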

\begin{proof}
Take an arbitrary $p\in\Lart$. By Remark \ref{R.P}(1), we know $\PP_{(N)}p=p+q$, where $q$ is a linear combination of partitions containing a singleton. From Lemma~\ref{L.VV} it follows, that $q\in\ker\V_{(N,\pm)}$, so
\[\V_{(N,\pm)}\PP_{(N)}p=\V_{(N,\pm)}(p+q)=\V_{(N,\pm)}p.\qedhere\]
\end{proof}

As mentioned in Remarks \ref{R.Vfunct} and \ref{R.P}(3), the maps $\V\colon\Kat\to\V\Kat$ and $\PP\colon\Kat\to\PP\Kat$ are not functors. Nevertheless, we prove in the following proposition that, similarly as $\Reptil PGP$ is monoidally $*$-isomorphic to $\Reptil VGV^*$, we have the same isomorphism for $\PP\Kat$ and $\V\Kat$. So, those two categories are also equivalent descriptions of the same phenomenon.

Recall that according to Propositions \ref{P.Pred} and \ref{P.redP}, $\PP\Kat$ is a reduced category and conversely every reduced category is of this form. Note also that the following proposition also proves that $\V_{(N,\pm)}\Kat\red$, resp $\V_{(N,\pm)}\Kat$ is indeed a linear category of partitions for any reduced category $\Kat\red$, resp.\ for any linear category $\Kat\owns\singleton\otimes\singleton$.

\begin{prop}
\label{P.Viso}
Let $\Kat\red\subset\Lart\nlin$ be a reduced category. The map $$\V_{(N,\pm)}\colon\Kat\red\to\V_{(N,\pm)}\Kat\red\subset\Lart\nnnlin$$ is an isomorphism of monoidal $*$-categories.
\end{prop}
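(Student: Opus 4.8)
The plan is to establish the three defining properties of a monoidal $*$-isomorphism: that $\V_{(N,\pm)}$ is a linear bijection of the morphism spaces, and that it preserves the tensor product, the involution and the composition. Throughout I write $\V:=\V_{(N,\pm)}$, $V:=V_{(N,\pm)}$, $P:=P_{(N)}$ and $\PP:=\PP_{(N)}$. Surjectivity onto $\V\Kat\red$ holds by definition, so the substance of the bijectivity claim is injectivity. Here I would use that, since $\PP$ is idempotent and $\PP\Kat\red=\Kat\red$, every $p\in\Kat\red$ satisfies $\PP p=p$, while by Remark~\ref{R.P}(2) the map $\PP$ annihilates every linear combination of partitions containing a singleton. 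By Lemma~\ref{L.VV}(2) the kernel of $\V$ on each $\Lart\nlin(k,l)$ is exactly the span of partitions containing a singleton; hence if $p\in\Kat\red$ lies in $\ker\V$, then $p=\PP p=0$, so $\V|_{\Kat\red}$ is injective.

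Preservation of tensor product and involution I expect to be immediate from Definition~\ref{D.VK}: the map $\V$ is defined blockwise, so it commutes with juxtaposition of diagrams, giving $\V(p\otimes q)=\V p\otimes\V q$, and the block formula for $\V b_k$ is symmetric under reflection of the row, giving $\V(p^*)=(\V p)^*$. By construction $\V$ also commutes with the rotations $\Lrot,\Rrot$. Combining this with Proposition~\ref{P.VP} and Example~\ref{ex.V}, one checks that $\V$ sends the reduced unit and pairings $\pi_{(N)},\Lrot\pi_{(N)},\Rrot\pi_{(N)}$ — the identity, cup and cap of the reduced category — to the genuine $\idpart,\pairpart,\uppairpart$ of $\Lart\nnnlin$; for instance $\pi_{(N)}=\PP\idpart$ gives $\V\pi_{(N)}=\V\idpart=\Rrot\V\pairpart=\idpart$.

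The composition is the main point, and here one cannot argue naively, since Remark~\ref{R.Vfunct} shows that $\V$ fails to be multiplicative on all of $\Lart\nlin$; the reduced hypothesis $p=\PP p$ must genuinely be used. Combining Theorem~\ref{T.VK} with $VV^*=1$, $V^*V=P$ (Lemma~\ref{L.V}) and $T_q=P^{\otimes m}T_qP^{\otimes l}$ for $q\in\Kat\red$, one obtains for composable $p\in\Kat\red(k,l)$, $q\in\Kat\red(l,m)$
\[
T_{(\V q)(\V p)}=T_{\V q}T_{\V p}=(V^{\otimes m}T_qV^{*\,\otimes l})(V^{\otimes l}T_pV^{*\,\otimes k})=V^{\otimes m}T_qP^{\otimes l}T_pV^{*\,\otimes k}=V^{\otimes m}T_{qp}V^{*\,\otimes k}=T_{\V(qp)},
\]
so $\V(qp)$ and $(\V q)(\V p)$ have the same image under $T_\bullet$; this is exactly the monoidal $*$-isomorphism $\Reptil PGP\cong\Reptil VGV^*$ of Proposition~\ref{P.repiso}.

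The hard part — the main obstacle — is upgrading this operator identity to an identity of linear combinations of partitions, because $T_\bullet$ on $\Lart\nnnlin(k,m)$ is \emph{not} injective once $k+m>N-1$, and one checks that this non-injectivity persists on $\V\Kat\red$. I would therefore resolve the composition case structurally rather than through $T_\bullet$: having shown that $\V$ preserves the tensor product and the rotations and carries reduced pairings to genuine pairings, it suffices to recall that the ordinary composition $qp$ of reduced elements arises from $p\otimes q$ by rotating the matching legs together and contracting them along the reduced pairings $\Rrot\pi_{(N)}$ (with the result staying inside $\Kat\red$). Applying $\V$ and using that it turns each such reduced contraction into contraction along $\uppairpart$ converts this presentation of $qp$ into the corresponding presentation of $(\V q)(\V p)$ out of $\V p\otimes\V q$. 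The one delicate point is precisely the commutation of $\V$ with contraction along a reduced pairing; this is where $p=\PP p$ is used and where the failure in Remark~\ref{R.Vfunct}, caused by bare singletons sitting on the contracted legs, is avoided. Granting this, $\V(qp)=(\V q)(\V p)$, so $\V$ is a monoidal $*$-functor; being a bijection, it is a monoidal $*$-isomorphism, and in particular $\V\Kat\red$ is a linear category of partitions.
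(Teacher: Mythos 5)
Your injectivity argument, the blockwise treatment of tensor product and involution, and the operator-level computation $T_{(\V q)(\V p)}=T_{\V(qp)}$ for $p,q\in\Kat\red$ are all correct and agree with the paper's proof (your route to injectivity via $\PP_{(N)}p=p$ is a harmless variant of the paper's appeal to Remark~\ref{R.red}). The gap is exactly at the step you call ``the one delicate point'' and then grant: that $\V:=\V_{(N,\pm)}$ commutes with contraction of reduced elements along a reduced pairing. This is not an auxiliary lemma from which the proposition follows; contraction along $\pi_{(N)}^{\otimes i}\otimes\Rrot\pi_{(N)}\otimes\pi_{(N)}^{\otimes j}$ \emph{is} a composition inside $\Kat\red$, so what you grant is a special case of the very multiplicativity you are trying to prove, and your structural reduction only shows that this special case implies the general one. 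Moreover, all of the difficulty lives in that special case: blockwise-ness of $\V$ gives $\otimes$ and $*$ for free precisely because those operations never merge blocks, whereas a contraction merges two blocks into one (or closes a block, producing loop factors), and there the coefficients of Definition~\ref{D.VK} must satisfy genuine identities. Even the simplest instance already involves a nontrivial cancellation: in $\Lart\nlin$ one computes $\Rrot\pi_{(N)}\cdot\Lrot\pi_{(N)}=N-1-1+1=N-1$, which matches the loop factor $N-1$ of $\uppairpart\cdot\pairpart=\V(\Rrot\pi_{(N)})\cdot\V(\Lrot\pi_{(N)})$ computed in $\Lart\nnnlin$; for larger blocks nothing of the sort is verified in your proposal. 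Observing that the counterexample of Remark~\ref{R.Vfunct} is ``caused by bare singletons'' does not show that no other failure mechanism exists, so the proposal never gets beyond the operator identity.

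The idea you are missing — and which lets the paper avoid any such combinatorial verification — is to let $N$ vary. Fix partitions $p\in\Lart(k,l)$, $q\in\Lart(l,m)$ and compare, for every $N$, the elements
\begin{equation*}
A(N)=\V_{(N,\pm)}q\,\V_{(N,\pm)}p
\qquad\text{and}\qquad
B(N)=\V_{(N,\pm)}\left(\PP_{(N)}q\,\PP_{(N)}p\right),
\end{equation*}
both linear combinations of the finitely many partitions in $\Lart(k,m)$ whose coefficients are, up to normalization, polynomials in $\sqrt{N}$. Since $\PP_{(N)}p,\PP_{(N)}q$ lie in a reduced category and $\V_{(N,\pm)}\PP_{(N)}=\V_{(N,\pm)}$ (Proposition~\ref{P.VP}), the operator identity gives $T_{A(N)}=T_{B(N)}$ for every $N$; by Corollary~\ref{C.Tiso}, $T_\bullet$ is injective on $\spanlin\Lart(k,m)$ for all sufficiently large $N$, so $A(N)=B(N)$ for all large $N$, and two polynomials agreeing on an infinite set agree identically, so $A(N)=B(N)$ for every $N$. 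Bilinearity in $(p,q)$ together with $\PP_{(N)}|_{\Kat\red}=\mathrm{id}$ then yields $\V(q')\V(p')=\V(q'p')$ for all $p',q'\in\Kat\red$ at the fixed $N$. Your objection that $T_\bullet$ remains non-injective on $\V\Kat\red$ is true for the fixed $N$ but irrelevant once $N$ becomes a variable; this polynomiality trick is precisely the device that replaces your unproven contraction lemma.
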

\begin{proof}
To simplify the notation, let us denote $V:=V_{(N,\pm)}$, $\V:=\V_{(N,\pm)}$, $P:=P_{(N)}$, $\PP:=\PP_{(N)}$.

First, let us prove the injectivity. From Lemma~\ref{L.VV} and Remark \ref{R.red}, it follows that
$$\ker \V|_{\Kat\red(k,l)}=\Kat\red\cap\spanlin\{p\in\Lart(k,l)\mid\text{$p$ contains a singleton}\}=\{0\}.$$

As for the functorial property note that since $\V$ acts blockwise, it behaves well with respect to the tensor product and involution for arbitrary elements of $\Lart\nlin$. For the composition, take any $p\in\Kat\red(k,l)$ and $q\in\Kat\red(l,m)$, then using Theorem~\ref{T.VK}, Lemma~\ref{L.V}, and Lemma \ref{L.Ppi} we have the following
\begin{align*}
T_{\V q\,\V p}&=T_{\V q}T_{\V p}=V^{\otimes m}T_qV^{*\,\otimes l}V^{\otimes l}T_pV^{*\,\otimes k}=\\&=V^{\otimes m}T_qP^{\otimes l}T_pV^{*\,\otimes k}=V^{\otimes m}T_qT_{\pi^{\otimes l}p}V^{*\,\otimes k}=V^{\otimes m}T_qT_p V^{*\,\otimes k}=T_{\V(qp)}.
\end{align*}
This essentially repeats what we have already proven in Proposition~\ref{P.repiso}. That is, that $\V$ induces a functor $\Reptil PGP\to\Reptil VGV^*$ for the corresponding categories of representations. If the functor $T_\bullet$, $p\mapsto T_p$ was an isomorphism, this would end the proof. Note that the functor $T_\bullet$ depends on $N$. We are going to use Corollary~\ref{C.Tiso} saying that $T_\bullet$ acting on $\Lart\nlin(k,l)$, for fixed $k$ and $l$, is injective for large $N$.

So, let us fix integers $k,l,m\in\N_0$ and two partitions $p\in\Lart(k,l)$ and $q\in\Lart(l,m)$. Let us number all partitions in $\Lart(k,m)$ as $\{x_i\}:=\Lart(k,m)$. Then, we can express for every $N\in\N$ the compositions
\begin{eqnarray*}
\V_{(N,\pm)}q\,\V_{(N,\pm)}p&=:&\sum_i\alpha_i(N)x_i\\
\V_{(N,\pm)}(qp)&=:&\sum_i\beta_i(N)x_i\\
\end{eqnarray*}

As we already computed, we have for every $N\in\N$ the equality $T_{\V_{(N,\pm)}q\,\V_{(N,\pm)}p}=T_{\V_{(N,\pm)}(qp)}$. For $N\ge N_0:=\max\{k,l,m\}$ the functor $T_\bullet$ is an isomorphism, so $\V_{(N,\pm)}q\V_{(N,\pm)}p=\V_{(N,\pm)}(qp)$. In other words, we have $\alpha_i(N)=\beta_i(N)$ for every $i$ and every $N\ge N_0$. From the definition of the map $\V$ and the composition of partitions, it follows that both $\alpha_i(N)$ and $\beta_i(N)$ are polynomials in $\sqrt{N}$ (up to some normalization). If two polynomials of one variable coincide on an infinite set, they must coincide everywhere. Hence, $\alpha_i(N)=\beta_i(N)$ for every $N\in\N$.
\end{proof}

\begin{cor}
\label{C.Vred}
For any $p_1,\dots,p_n\in\PP_{(N)}\Lart\nlin$, we have
$$\V_{(N,\pm)}\langle p_1,\dots,p_n\rangle\nred=\langle\V_{(N,\pm)}p_1,\dots,\V_{(N,\pm)}p_n\rangle\nnnlin.$$
\end{cor}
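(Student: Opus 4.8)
The plan is to prove the two inclusions separately, the central tool being Proposition~\ref{P.Viso}, which says that $\V:=\V_{(N,\pm)}$ restricts to a monoidal $*$-isomorphism of any reduced category onto its image and, in particular, that this image is again a genuine linear category of partitions in $\Lart\nnnlin$. Throughout I abbreviate $\Kat\red:=\langle p_1,\dots,p_n\rangle\nred$.

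For the inclusion $\supset$ I would simply observe that $\V\Kat\red$ is, by Proposition~\ref{P.Viso}, a linear category of partitions, and that it obviously contains each $\V p_i$. Since $\langle\V p_1,\dots,\V p_n\rangle\nnnlin$ is by definition the smallest linear category of partitions containing these elements, it must be contained in $\V\Kat\red=\V\langle p_1,\dots,p_n\rangle\nred$.

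For the inclusion $\subset$ the idea is the standard one that a functor sends a generated category into the category generated by the images of its generators. Concretely, $\Kat\red$ is obtained from the generators $p_1,\dots,p_n$ together with the structural elements $\pi_{(N)},\Lrot\pi_{(N)},\Rrot\pi_{(N)}$ (which play the role of identity and pair partitions in a reduced category) by iterated category operations, rotations, and linear combinations. By Proposition~\ref{P.Viso} the map $\V$ respects all of these on $\Kat\red$ — tensor product and involution for free since $\V$ acts blockwise, and composition by that proposition — and it commutes with rotations by Definition~\ref{D.VK}; hence the image of any generated element is the corresponding expression in the images of the generators. The one point to check is that the structural elements map to the structural elements of the ambient linear category: from $\V\PP_{(N)}=\V$ (Proposition~\ref{P.VP}) together with $\pi_{(N)}=\PP_{(N)}\idpart$ and $\V\idpart=\idpart$ (the latter because $\idpart=\Rrot\pairpart$ and $\V$ fixes $\pairpart$ by Example~\ref{ex.V}), one obtains $\V\pi_{(N)}=\idpart$, whence $\V\Lrot\pi_{(N)}=\pairpart$ and $\V\Rrot\pi_{(N)}=\uppairpart$. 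As these partitions belong to every linear category by definition, every element of $\V\Kat\red$ lands in $\langle\V p_1,\dots,\V p_n\rangle\nnnlin$, which is the desired inclusion.

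I expect no serious obstacle: the genuinely hard closure property, namely closure of $\Kat\red$ under composition and the compatibility of $\V$ with it, has already been secured in Proposition~\ref{P.Viso}, so what remains is only the bookkeeping of how $\V$ behaves on rotations and on the identity and pair partitions, which is routine.
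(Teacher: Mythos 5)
Your proof is correct and takes essentially the same route as the paper's: both reduce the statement to the functoriality of $\V_{(N,\pm)}$ on reduced categories (Proposition~\ref{P.Viso}), applied to finite expressions in the generators $p_1,\dots,p_n$ and the structural elements $\pi_{(N)},\Lrot\pi_{(N)},\Rrot\pi_{(N)}$, whose images $\idpart,\pairpart,\uppairpart$ lie in every linear category of partitions. If anything, your bookkeeping is slightly more careful than the paper's one-line argument, which loosely writes $\V_{(N,\pm)}\pi_{(N)}=\pairpart$ where $\idpart$ is meant.
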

\begin{proof}
Any element of the left hand side was made by a finite amount of category operations applied on $p_1,\dots,p_n,\pi_{(N)}$ and then applying the functor $\V_{(N,\pm)}$. Thanks to the functoriality of $\V_{(N)}$, this equals to applying the same operations on the elements $\V_{(N,\pm)}p_1,\dots,\V_{(N,\pm)}p_n$, and $\V_{(N,\pm)}\pi_{(N)}=\pairpart$, which are the generators of the right-hand side.
\end{proof}

\begin{cor}
\label{C.VP}
For any $p_1,\dots,p_n\in\Lart\nlin$, we have
$$\V_{(N,\pm)}\langle\PP_{(N)}p_1,\dots,\PP_{(N)}p_n,\singleton\otimes\singleton\rangle\nlin=\langle \V_{(N,\pm)}p_1,\dots,\V_{(N,\pm)}p_n\rangle\nnnlin.$$
\end{cor}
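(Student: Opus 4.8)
The plan is to avoid any new computation with the explicit form of $\V_{(N,\pm)}$ and instead chain together three results already at our disposal: Proposition~\ref{P.VP} (the identity $\V_{(N,\pm)}\PP_{(N)}=\V_{(N,\pm)}$), Corollary~\ref{C.redP} (which recovers a reduced category as the image under $\PP_{(N)}$ of a suitable linear completion), and Corollary~\ref{C.Vred} (functoriality of $\V_{(N,\pm)}$ on generators of a reduced category). Write $\Kat:=\langle\PP_{(N)}p_1,\dots,\PP_{(N)}p_n,\singleton\otimes\singleton\rangle\nlin$ for the linear category appearing on the left-hand side; since $\singleton\otimes\singleton\in\Kat$, all of the $\PP_{(N)}$-machinery applies to it.

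First I would use Proposition~\ref{P.VP} elementwise on $\Kat$: for each $p\in\Kat$ we have $\V_{(N,\pm)}p=\V_{(N,\pm)}\PP_{(N)}p$, and hence
$$\V_{(N,\pm)}\Kat=\V_{(N,\pm)}\PP_{(N)}\Kat.$$
This moves the problem into the setting of reduced categories, where $\V_{(N,\pm)}$ is known to be a $*$-functor. The next step is to identify $\PP_{(N)}\Kat$ as a concretely generated reduced category. Each $\PP_{(N)}p_i$ lies in $\PP_{(N)}\Lart\nlin$, so Corollary~\ref{C.redP}, invoked with the generating set $\PP_{(N)}p_1,\dots,\PP_{(N)}p_n$, gives
$$\langle\PP_{(N)}p_1,\dots,\PP_{(N)}p_n\rangle\nred=\PP_{(N)}\langle\PP_{(N)}p_1,\dots,\PP_{(N)}p_n,\singleton\otimes\singleton\rangle\nlin=\PP_{(N)}\Kat.$$

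Finally I would apply Corollary~\ref{C.Vred} to this reduced category and simplify the resulting generators with Proposition~\ref{P.VP} once more, using $\V_{(N,\pm)}\PP_{(N)}p_i=\V_{(N,\pm)}p_i$:
$$\V_{(N,\pm)}\PP_{(N)}\Kat=\langle\V_{(N,\pm)}\PP_{(N)}p_1,\dots,\V_{(N,\pm)}\PP_{(N)}p_n\rangle\nnnlin=\langle\V_{(N,\pm)}p_1,\dots,\V_{(N,\pm)}p_n\rangle\nnnlin.$$
Combining the three displayed identities yields the claim.

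The argument carries no genuine analytic difficulty; the only delicate point is bookkeeping with the generating sets. I would be careful to invoke Corollary~\ref{C.redP} and Corollary~\ref{C.Vred} with the generators $\PP_{(N)}p_i$ rather than the $p_i$ themselves, so that the $\singleton\otimes\singleton$ introduced inside the linear span in Corollary~\ref{C.redP} coincides precisely with the $\singleton\otimes\singleton$ generator of $\Kat$. The conceptual reason the two occurrences of $\singleton\otimes\singleton$ cancel cleanly is that $\V_{(N,\pm)}\PP_{(N)}=\V_{(N,\pm)}$ holds \emph{identically} on $\Lart\nlin$, so it is immaterial whether $\V_{(N,\pm)}$ is fed $\Kat$ or its reduction $\PP_{(N)}\Kat$, and likewise on generators; this is exactly what lets the passage through reduced categories be inserted without changing the image.
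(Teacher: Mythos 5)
Your proof is correct and is essentially the paper's own argument: the paper likewise combines Proposition~\ref{P.VP} with Corollary~\ref{C.redP} to rewrite the left-hand side as $\V_{(N,\pm)}\langle\PP_{(N)}p_1,\dots,\PP_{(N)}p_n\rangle\nred$, then applies Corollary~\ref{C.Vred} and absorbs the generators $\V_{(N,\pm)}\PP_{(N)}p_i=\V_{(N,\pm)}p_i$. Your write-up merely makes explicit the bookkeeping (applying $\PP_{(N)}$-reduction to the correct generating set and the final simplification via Proposition~\ref{P.VP}) that the paper leaves implicit.
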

\begin{proof}
Using Proposition~\ref{P.VP} and Corollary~\ref{C.redP} we can express
$$\V_{(N,\pm)}\langle\PP_{(N)}p_1,\dots,\PP_{(N)}p_n,\singleton\otimes\singleton\rangle\nlin=\V_{(N,\pm)}\langle\PP_{(N)}p_1,\dots,\PP_{(N)}p_n\rangle\nred.$$
Now, we just apply the previous corollary.
\end{proof}

\subsection{The way back}

The tools we developed in this section are now going to help us to do the converse of Section \ref{sec.isometry}. That is, to extend an irreducible representation of a quantum group by a one-dimensional representation and describe the intertwiner spaces.

A general question, which we are not going to answer here, is the following. Given a compact quantum group $H$ such that $S_{N-1}\subset (S_N)^{\rm irr}_\pm\subset H\subset O_{N-1}^+$, find all quantum groups $G$ such that $S_N\subset G\subset B_N^{\#+}$ and $H=G^{\rm irr}_\pm$. In the following theorem, constituting our main Theorem~\ref{T2}, we are going to describe the intertwiner spaces of the three canonical choices for such extension $G$. Those are the group $H$ itself extended by a trivial representation, the tensor product $H\times\hat\Z_2$ and the free product $H*\hat\Z_2$.

\begin{thm}
\label{T.redqg}
Let $\Kat\red\subset\Lart\nlin$ be a reduced category. Denote by $H$ the quantum group $S_{N-1}\subset H\subset O_{N-1}^+$ corresponding to the category $\V_{(N,\pm)}\Kat\red$. Then we can construct the quantum group corresponding to the following categories
\begin{eqnarray*}
\langle\Kat\red\rangle\nlin           &\qquad\hbox{corresponds to}\qquad& U_{(N,\pm)}^*(H*\hat\Z_2)U_{(N,\pm)},\\
\langle\Kat\red,\Labac\rangle\nlin    &\qquad\hbox{corresponds to}\qquad& U_{(N,\pm)}^*(H\times\hat\Z_2)U_{(N,\pm)},\\
\langle\Kat\red,\singleton\rangle\nlin&\qquad\hbox{corresponds to}\qquad& U_{(N,\pm)}^*(H\times E)U_{(N,\pm)},\\
\end{eqnarray*}
where $E=(\C,1)$ is the trivial (quantum) group.
\end{thm}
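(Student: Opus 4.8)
The plan is to realise each of the three quantum groups through its universal C*-algebra and to match it, after conjugating by $U:=U_{(N,\pm)}$, with the corresponding product of $H$ and $\hat\Z_2$ (or $E$). Write $V:=V_{(N,\pm)}$, $P:=P_{(N)}=V^*V$, $\pi:=\pi_{(N)}$, $\V:=\V_{(N,\pm)}$, $\PP:=\PP_{(N)}$, and denote by $G_1,G_2,G_3$ the quantum groups corresponding to $\langle\Kat\red\rangle\nlin$, $\langle\Kat\red,\Labac\rangle\nlin$, $\langle\Kat\red,\singleton\rangle\nlin$. Each of these categories $\mathcal D_i$ contains $\singleton\otimes\singleton=N(\pairpart-\Lrot\pi)$ by Remark~\ref{R.red}, so $S_N\subset G_i\subset B_N^{\#+}$ and the fundamental representation decomposes as $UuU^*=v\oplus r$ with $v=VuV^*$ and $r$ a self-adjoint unitary (orthogonality of $u$ together with the vanishing of the off-diagonal blocks forces $r=r^*$ and $r^2=1$). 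Applying $\V$ and using $\V\PP=\V$ (Proposition~\ref{P.VP}) together with $\PP\langle\Kat\red\rangle\nlin=\PP\langle\Kat\red,\Labac\rangle\nlin=\PP\langle\Kat\red,\singleton\rangle\nlin=\Kat\red$ (Proposition~\ref{P.redP}), I get $\V\mathcal D_i=\V\Kat\red$ for all three, so by Theorem~\ref{T.VK} each $G_i$ satisfies $(G_i)^{\rm irr}_\pm=H$; in particular $v$ is exactly the fundamental representation of $H$ and $C(H)$ is the subalgebra of $C(G_i)$ generated by the $v_{ij}$.

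Next I would show that the relations coming from $\Kat\red$ constrain only the block $v$ and leave $r$ entirely free. Since $\spanlin\{\xi\}$ and $\spanlin\{\xi\}^\perp$ are $u$-invariant and $\ker V=\spanlin\{\xi\}$, $VV^*=1$ (Lemma~\ref{L.V}), one checks the entrywise identities $Vu=vV$ and $uV^*=V^*v$. For $p\in\Kat\red$ we have $\PP p=p$, hence $T_p=P^{\otimes l}T_pP^{\otimes k}=V^{*\,\otimes l}T_{\V p}V^{\otimes k}$ by Theorem~\ref{T.VK} and Lemma~\ref{L.Ppi}. Substituting the two identities gives $T_pu^{\otimes k}=V^{*\,\otimes l}T_{\V p}v^{\otimes k}V^{\otimes k}$ and $u^{\otimes l}T_p=V^{*\,\otimes l}v^{\otimes l}T_{\V p}V^{\otimes k}$; multiplying by $V^{\otimes l}$ on the left and $V^{*\,\otimes k}$ on the right and using $VV^*=1$ shows that $T_pu^{\otimes k}=u^{\otimes l}T_p$ is equivalent to $T_{\V p}v^{\otimes k}=v^{\otimes l}T_{\V p}$, i.e.\ to the defining relation of $H$ written in the variables $v_{ij}$, an expression in which $r$ does not occur. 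Combined with the fact that orthogonality of $u$ decouples into orthogonality of $v$ and $r=r^*=r^{-1}$ separately, this proves that the $\Kat\red$-relations split into the defining relations of $H$ and the relation $r^2=1$, with nothing linking the two blocks.

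It then remains to read off the effect of the extra generators via the standard dictionary: the rotation of $\Labac$ yields $ru_{ij}=u_{ij}r$, equivalently (since $u=U^*(v\oplus r)U$) that $r$ commutes with every $v_{ij}$; and the singleton $\singleton$ yields $r=1$. By the universal description of $C(G_i)$ from Corollary~\ref{C.Tannaka} and functoriality of $T_\bullet$ (which reduces the relations to those of the chosen generators), $C(G_1)$ is then the universal C*-algebra generated by a copy of $C(H)$ and a self-adjoint unitary $r$ subject to no further relation, that is the free product $C(H)*_\C C(\hat\Z_2)$; adjoining centrality of $r$ gives $C(G_2)=C(H)\otimes_{\max}C(\hat\Z_2)$; and imposing $r=1$ gives $C(G_3)=C(H)$. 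As in all three cases the fundamental representation is $u=U^*(v\oplus r)U$ (with $r=1$ in the last case), these quantum groups are precisely $U^*(H*\hat\Z_2)U$, $U^*(H\times\hat\Z_2)U$ and $U^*(H\times E)U$, which is the assertion.

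The hard part is the decoupling established in the second paragraph: one must be sure that the free category $\langle\Kat\red\rangle\nlin$ imposes genuinely no relation between the $v$-block and $r$, so that the resulting universal C*-algebra is honestly the free product and not some proper quotient. This is exactly where the partial-isometry identities $Vu=vV$, $uV^*=V^*v$ and the support property $T_p=P^{\otimes l}T_pP^{\otimes k}$ for $p\in\Kat\red$ do the work, confining every $\Kat\red$-relation to the $v$-block, while functoriality of $T_\bullet$ guarantees that passing from the generators to the generated category introduces no new, possibly coupling, relations.
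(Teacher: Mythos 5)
Your proposal is correct and takes essentially the same route as the paper's own proof: decompose $UuU^*=v\oplus r$, identify $v$ with the fundamental representation of $H$ using $\V_{(N,\pm)}\PP_{(N)}=\V_{(N,\pm)}$ and $\PP_{(N)}\langle\Kat\red\rangle\nlin=\Kat\red$, show that each relation coming from $p\in\Kat\red$ concerns only the $v$-block, and then read off the extra relations given by $\Labac$ and $\singleton$ before concluding by universality. The only (cosmetic) difference is in the block-confinement step, where you use the identities $Vu=vV$, $uV^*=V^*v$ and $T_p=V^{*\,\otimes l}T_{\V_{(N,\pm)}p}V^{\otimes k}$ to obtain the explicit equivalence with $T_{\V_{(N,\pm)}p}\in\Mor(v^{\otimes k},v^{\otimes l})$, while the paper conjugates by $U^{\otimes l}$ and invokes $T_p=P_{(N)}^{\otimes l}T_pP_{(N)}^{\otimes k}$ together with $UP_{(N)}=EU$ — the same computation in slightly different clothing.
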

\begin{proof}
For simplicity, denote $U:=U_{(N,\pm)}$, $V:=V_{(N,\pm)}$. Denote by $G$ the quantum group corresponding to $\langle\Kat\red\rangle\nlin$ and by $(u_{ij})_{i,j=1}^n$ its fundamental representation.

As mentioned in Remark \ref{R.red}, $\singleton\otimes\singleton\in\langle\Kat\red\rangle\nlin$, so $UuU^*=v\oplus r$, where $v=VuV^*\in M_{N-1}(C(G))$ and $r=\sum_k u_{ik}\in C(G)$ such that $r^2=1$. Using Proposition~\ref{P.redP} and \ref{P.VP} we derive $\V\langle\Kat\red\rangle\nlin=\V\PP\langle\Kat\red\rangle\nlin=\V\Kat\red$, so we see that $v$ is the fundamental representation of $H$, i.e.\ $H=G_\pm^{\rm irr}$ according to Theorem~\ref{T.VK}. To prove that $G=U^*(H*\hat\Z_2)U$, it remains to show that there are no additional relations in $C(G)$ apart from the relations for $v$ and the relations $r=r^*$, $r^2=1$.

The relations in $C(G)$ are precisely those corresponding to partitions in the category $\langle\Kat\red\rangle\nlin$, which is generated by $\Kat\red$ (and the pair partition $\pairpart$ of course). So, the relations are the orthogonality of $u$, which is equivalent to orthogonality of $v$ and the relations $r=r^*$, $r^2=1$, and the relations implied by the partitions $p\in\Kat\red$. Taking any $p\in\Kat\red$, the relation $T_pu^{\otimes k}=u^{\otimes l}T_p$ is equivalent to
$$U^{\otimes l}T_pU^{*\,\otimes k}(v\oplus r)^{\otimes k}=U^{\otimes l}T_pu^{\otimes k}U^{*\,\otimes k}=U^{\otimes l}u^{\otimes l}T_pU^{*\,\otimes k}=(v\oplus r)^{\otimes l}U^{\otimes l}T_pU^{*\,\otimes k}.$$
Noticing that $T_p=T_{\pi^{\otimes k}p\pi^{\otimes l}}=P^{\otimes k}T_p P^{\otimes l}$ and that $UP=EU$, where $E$ is the orthogonal projection onto the first $N-1$ basis vectors, we see that those relations only contain the subrepresentation $v$ and hence are equivalent to the relations in $C(H)$.

The partitions $\Labac$ and $\singleton$ correspond to additional relations $ru_{ij}=u_{ij}r$ and $r=1$ respectively. From this, the rest of the proposition follows.
\end{proof}

\begin{rem}
As a consequence, we have that the inclusions
$$\langle\Kat\red\rangle\nlin\subset\langle\Kat\red,\Labac\rangle\nlin\subset\langle\Kat\red,\singleton\rangle\nlin,$$
are always strict.
\end{rem}


\section{Examples}
\label{sec.examples}

In this section, we apply the theory developed in the previous sections to easy quantum groups. Given an easy category $\Kat$ containing $\singleton\otimes\singleton$ and the corresponding easy quantum group $G$, we compute the category $\V_{(N,\pm)}\Kat$ corresponding to the quantum group $H:=G^{\rm irr}_\pm$. Then, we apply Theorem~\ref{T.redqg} to compute the intertwiners of the quantum groups $H*\hat\Z_2$, $H\times\hat\Z_2$ and $H\times E$.

\subsection{Non-crossing examples}
In this subsection we are going to use our approach for the easy quantum groups $B_N^+$, $S_N'^+$ and $S_N^+$. We will denote by $NC$ the set of all non-crossing partitions. This is a category in the Banica--Speicher sense corresponding to the quantum group $S_N^+$. We will denote by $NC'$ the set of all non-crossing partitions of even length, which corresponds to the quantum group $S_N'^+=S_N^+\tiltimes\hat\Z_2$. In the language of linear categories of partitions, we have
$$NC\nlin=\langle\fourpart,\singleton\rangle\nlin=\langle\Laaa\rangle\nlin,\qquad NC'\nlin=\langle\fourpart,\singleton\otimes\singleton\rangle\nlin.$$

\begin{lem}
\label{L.Smin}
It holds that
$$\PP_{(N)}NC\nlin=\langle\PP_{(N)}\Laaa\rangle\nred.$$
\end{lem}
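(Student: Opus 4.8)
The plan is to avoid manipulating the non-functorial map $\PP_{(N)}$ on all of $NC\nlin$ directly, and instead to reduce the whole statement to a single identity between \emph{full} linear categories,
$$\langle\Laaa\rangle\nlin=\langle\PP_{(N)}\Laaa,\singleton\rangle\nlin,\qquad(\star)$$
from which the lemma follows formally. Granting $(\star)$, I would compute
$$\PP_{(N)}NC\nlin=\PP_{(N)}\langle\Laaa\rangle\nlin=\PP_{(N)}\langle\PP_{(N)}\Laaa,\singleton\rangle\nlin=\langle\PP_{(N)}\Laaa\rangle\nred,$$
where the last equality is Proposition~\ref{P.redP} applied to the reduced category $\Kat\red:=\langle\PP_{(N)}\Laaa\rangle\nred$, which gives $\Kat\red=\PP_{(N)}\langle\Kat\red,\singleton\rangle\nlin$, combined with Lemma~\ref{L.rednlin} to rewrite $\langle\langle\PP_{(N)}\Laaa\rangle\nred,\singleton\rangle\nlin=\langle\PP_{(N)}\Laaa,\singleton\otimes\singleton,\singleton\rangle\nlin=\langle\PP_{(N)}\Laaa,\singleton\rangle\nlin$. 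That $\langle\PP_{(N)}\Laaa\rangle\nred$ is a genuine reduced category is guaranteed by Proposition~\ref{P.Pred} and Corollary~\ref{C.redP}. Thus all the content sits in $(\star)$.

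For $(\star)$ I would prove the two inclusions separately. For ``$\supseteq$'' it suffices to place both generators of the right-hand side inside $\langle\Laaa\rangle\nlin$: the element $\PP_{(N)}\Laaa=\pi_{(N)}^{\otimes3}\Laaa$ lies there since $\pi_{(N)}=\idpart-\frac1N\disconnecterpart$ and $\disconnecterpart$ is a rotation of $\singleton\otimes\singleton\in NC\nlin$; and the singleton lies there because capping two legs of the triple block yields $(\uppairpart\otimes\idpart)\Laaa=\singleton$ (no closed loop forms, so no factor of $N$ appears). For ``$\subseteq$'' it suffices to show $\Laaa\in\langle\PP_{(N)}\Laaa,\singleton\rangle\nlin$. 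Here I would use the explicit expansion from Example~\ref{ex.P},
$$\PP_{(N)}\Laaa=\Laaa-\tfrac1N(\Laab+\Laba+\Labb)+\tfrac2{N^2}\Labc,$$
so that $\Laaa=\PP_{(N)}\Laaa+\tfrac1N(\Laab+\Laba+\Labb)-\tfrac2{N^2}\Labc$. Each correction partition $\Laab,\Laba,\Labb,\Labc$ is non-crossing with all blocks of size at most two, hence belongs to $\langle\singleton\rangle\nlin$ (the category of $B_N^+$), and therefore to $\langle\PP_{(N)}\Laaa,\singleton\rangle\nlin$; consequently $\Laaa$ does as well.

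The genuinely delicate point — and the reason one cannot simply compare the uncompressed categories — is that $\langle\PP_{(N)}\Laaa,\singleton\otimes\singleton\rangle\nlin$ is \emph{strictly} smaller than $NC\nlin$ (it is the category of $S_N^+*\hat\Z_2$, and in particular does not contain $\Laaa$), so the equality $\PP_{(N)}NC\nlin=\langle\PP_{(N)}\Laaa\rangle\nred$ is invisible at the level of the full categories and only emerges after compression. What makes $(\star)$ work is precisely that adjoining the \emph{single} singleton $\singleton$, rather than only $\singleton\otimes\singleton$, restores exactly the singleton-containing correction terms of $\PP_{(N)}\Laaa$, while $\PP_{(N)}$ then annihilates this added freedom because $\PP_{(N)}\singleton=0$. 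I expect the only step needing real care in the writeup is the bookkeeping that each of the four correction partitions indeed lies in $\langle\singleton\rangle\nlin$; the rest is a formal consequence of Proposition~\ref{P.redP}, Corollary~\ref{C.redP} and Lemma~\ref{L.rednlin}.
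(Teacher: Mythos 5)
Your proposal is correct and follows essentially the same route as the paper's proof: the paper also rests everything on the identity $NC\nlin=\langle\Laaa\rangle\nlin=\langle\PP_{(N)}\Laaa,\singleton\rangle\nlin$ (which it dismisses as ``easy to check'') and then concludes via Lemma~\ref{L.rednlin} and Proposition~\ref{P.redP}, exactly as you do. The only difference is that you spell out the two inclusions behind $(\star)$ explicitly, which is a welcome filling-in of detail rather than a different argument.
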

\begin{proof}
It is easy to check that $NC\nlin=\langle\Laaa\rangle\nlin=\langle\PP_{(N)}\Laaa,\singleton\rangle\nlin$ (see also Example \ref{ex.P}). From Lemma~\ref{L.rednlin} we have $\langle\langle\PP_{(N)}\Laaa\rangle\nred\rangle\nlin=\langle\PP_{(N)}\Laaa,\singleton\otimes\singleton\rangle\nlin$. Adding the singleton to the category on both sides, we have
$$NC\nlin=\langle\PP_{(N)}\Laaa,\singleton\rangle\nlin=\langle\langle\PP_{(N)}\Laaa\rangle\nred,\singleton\rangle\nlin.$$
Finally, we use Proposition~\ref{P.redP} to derive
\[\PP_{(N)}NC\nlin=\PP_{(N)}\langle\langle\PP_{(N)}\Laaa\rangle\nred,\singleton\rangle\nlin=\langle\PP_{(N)}\Laaa\rangle\nred.\qedhere\]
\end{proof}

\begin{lem}
\label{L.SSmin}
Suppose $N>3$. It holds that
$$\PP_{(N)}NC'\nlin=\langle\PP_{(N)}\Laaaa\rangle\nred.$$
\end{lem}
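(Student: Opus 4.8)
The inclusion $\supset$ is immediate: since $\singleton\otimes\singleton\in NC'\nlin$, Proposition~\ref{P.Pred} shows that $\PP_{(N)}NC'\nlin$ is a reduced category, and it contains $\PP_{(N)}\Laaaa$ (because $\Laaaa$, a rotation of $\fourpart$, lies in $NC'\nlin$), hence contains all of $\langle\PP_{(N)}\Laaaa\rangle\nred$. The content is the reverse inclusion $\PP_{(N)}NC'\nlin\subset\langle\PP_{(N)}\Laaaa\rangle\nred$. I would first emphasize that, contrary to Lemma~\ref{L.Smin}, this cannot be obtained by upgrading to a full equality $\langle\PP_{(N)}\Laaaa,\singleton\otimes\singleton\rangle\nlin=NC'\nlin$: these are genuinely different completions of the same reduced category, corresponding to the non-isomorphic quantum groups $(S_N^+\times\hat\Z_2)*\hat\Z_2$ and $S_N'^+=S_N^+\tiltimes\hat\Z_2$. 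In particular $\Laaaa\notin\langle\PP_{(N)}\Laaaa,\singleton\otimes\singleton\rangle\nlin$, so the device of Lemma~\ref{L.Smin}, which used $\singleton\in NC\nlin$ to recover the generating block, is unavailable and a different route is required.

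The plan is to reduce everything to projected blocks. Since $\PP_{(N)}$ is multiplicative for the tensor product (the factor $\pi_{(N)}^{\otimes\bullet}$ splits across legs) and commutes with the involution and with rotations, and since every non-crossing partition is built from its blocks by tensor products and nested compositions, I would show that $\PP_{(N)}NC'\nlin$ is generated, as a reduced category, by the projected even blocks $\PP_{(N)}b_{2k}$, $k\ge1$. The only non-formal point is composition: $\PP_{(N)}(qp)$ and the reduced composition $\PP_{(N)}q\cdot\PP_{(N)}p=\pi_{(N)}^{\otimes m}q\,\pi_{(N)}^{\otimes l}p\,\pi_{(N)}^{\otimes k}$ differ by $\pi_{(N)}^{\otimes m}q\,(1^{\otimes l}-\pi_{(N)}^{\otimes l})\,p\,\pi_{(N)}^{\otimes k}$; expanding $1-\pi_{(N)}=\frac1N\disconnecterpart$ shows each correction term carries a factor $N^{-1}$ and a singleton inserted in a contracted leg, hence is a reduced word in strictly simpler projected blocks and is absorbed by an induction on the number of legs. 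Granting this, it suffices to prove $\PP_{(N)}b_{2k}\in\langle\PP_{(N)}\Laaaa\rangle\nred$ for every $k$.

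This last step is the main obstacle. In the ordinary category $NC'\nlin$, indeed already in $\langle\Laaaa\rangle\nlin$ (the category of $H_N^+$), each even block $b_{2k}$ is produced from $b_4=\Laaaa$ by explicit compositions with identity strings and pairs; I would mirror such a construction in the reduced setting, replacing $\idpart$ by $\pi_{(N)}$ and ordinary composition by reduced composition, obtaining a reduced word $w_{2k}$ in $\PP_{(N)}\Laaaa$ and $\pi_{(N)}$ that lies in $\langle\PP_{(N)}\Laaaa\rangle\nred$ by construction. Applying $T_\bullet$ and using $T_{\pi_{(N)}}=P_{(N)}$ (Lemma~\ref{L.Ppi}) together with $T_{\PP_{(N)}b_4}=P_{(N)}^{\otimes4}T_{b_4}P_{(N)}^{\otimes4}$, functoriality gives $T_{w_{2k}}=P_{(N)}^{\otimes 2k}T_{b_{2k}}P_{(N)}^{\otimes 2k}=T_{\PP_{(N)}b_{2k}}$ for every $N$. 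To pass from this operator identity back to an identity of linear combinations I would invoke the continuation argument of Proposition~\ref{P.Viso}: the coefficients of $w_{2k}-\PP_{(N)}b_{2k}$ are polynomials in $1/N$, the difference is annihilated by $T_\bullet$ for all $N$, and for $N\ge 2k$ the functor $T_\bullet$ is injective on $\Lart(0,2k)$ (Corollary~\ref{C.Tiso}), so these polynomial coefficients vanish on an infinite set and hence identically; thus $w_{2k}=\PP_{(N)}b_{2k}\in\langle\PP_{(N)}\Laaaa\rangle\nred$.

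The hypothesis $N>3$ is what makes the base of this transfer legitimate: it forces injectivity of $T_\bullet$ on $\Lart(0,4)$ (Corollary~\ref{C.Tiso}), so that $\PP_{(N)}\Laaaa$ is not accidentally identified with a combination of projected pairs, and it excludes the low-dimensional coincidences (such as $S_N^+=S_N$ for $N\le3$) that would collapse $\langle\PP_{(N)}\Laaaa\rangle\nred$. Combined with the reduction to blocks, this yields $\PP_{(N)}NC'\nlin\subset\langle\PP_{(N)}\Laaaa\rangle\nred$ and hence the asserted equality. The two places I expect to demand the most care are the composition step in the reduction to blocks (controlling the correction terms) and the explicit reduced-word realization of $\PP_{(N)}b_{2k}$ from $\PP_{(N)}\Laaaa$.
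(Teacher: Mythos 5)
Your proposal has a genuine gap, and it sits exactly at the step you flag as the main obstacle. The transfer argument in your realization of $\PP_{(N)}b_{2k}$ is based on the claim that, for the mirrored word $w_{2k}$, functoriality of $T_\bullet$ gives $T_{w_{2k}}=P_{(N)}^{\otimes 2k}T_{b_{2k}}P_{(N)}^{\otimes 2k}$. This is false. Functoriality computes $T_{w_{2k}}$ as a composition of the \emph{projected} operators, and the projections sitting at each composition interface do not cancel: $(P^{\otimes m}T_qP^{\otimes l})(P^{\otimes l}T_pP^{\otimes k})=P^{\otimes m}T_qP^{\otimes l}T_pP^{\otimes k}$, which differs from $P^{\otimes m}T_{qp}P^{\otimes k}$ precisely because $P_{(N)}\neq 1$. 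This is the non-functoriality of $\PP_{(N)}$ recorded in Remark~\ref{R.P}, and the discrepancy is nonzero for the words you need. Concretely, $b_6=(\idpart^{\otimes 3}\otimes\uppairpart\otimes\idpart^{\otimes 3})(b_4\otimes b_4)$, and mirroring this construction yields
$$(\pi_{(N)}^{\otimes 3}\otimes\PP_{(N)}\uppairpart\otimes\pi_{(N)}^{\otimes 3})(\PP_{(N)}b_4\otimes\PP_{(N)}b_4)=\PP_{(N)}b_6-\frac{1}{N}\,\PP_{(N)}b_3\otimes\PP_{(N)}b_3,$$
where $\PP_{(N)}b_3\otimes\PP_{(N)}b_3=b_3\otimes b_3+(\hbox{terms containing singletons})\neq 0$. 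For $N\ge 6$ the map $T_\bullet$ is injective on $\Lart\nlin(0,6)$ by Corollary~\ref{C.Tiso}, so $T_{w_6}\neq T_{\PP_{(N)}b_6}$ for all such $N$: the operator identity you feed into the polynomial-continuation argument is false, not merely unproven, and continuation (which is sound in itself, as in Proposition~\ref{P.Viso}) cannot rescue it. Choosing a different construction of $b_{2k}$ from $b_4$ does not help: any composition that merges two blocks passes through an interface where $\idpart^{\otimes l}-\pi_{(N)}^{\otimes l}$ contributes a nonzero cut term of order $\frac1N$.

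The same phenomenon undermines your reduction step, and it also shows that your reading of the hypothesis $N>3$ is off. First, the correction terms at each interface involve projected blocks of smaller, typically \emph{odd}, size, and $NC'$ itself contains partitions with odd blocks (e.g.\ $b_3\otimes b_3$); so ``generation by the projected even blocks'' is not a formal consequence of the block decomposition. One must prove, non-formally, that tensor products of odd projected blocks lie in $\langle\PP_{(N)}\Laaaa\rangle\nred$ and that blocks can be moved into nested positions. This is exactly what the paper's proof supplies: an induction producing $\PP_{(N)}b_k$ for $k$ even with leading coefficient $1-\frac{3}{N}$ (Step 1), the displayed identity read backwards to generate $\PP_{(N)}b_k\otimes\PP_{(N)}b_l$ with $k,l$ odd (Step 2), a transposition-type element $(\PP_{(N)}b_k)^*\otimes\idpart\otimes\PP_{(N)}b_k$ obtained with coefficient $\left(1-\frac1N\right)^{k-1}-\left(-\frac1N\right)^{k-1}$ (Step 3), and a final rearrangement of the blocks of an arbitrary $p\in NC'$ (Step 4). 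The assumption $N>3$ is needed so that these explicit coefficients are invertible (the first vanishes at $N=3$, the second can vanish at $N=2$); it has nothing to do with injectivity of $T_\bullet$, which the paper's proof never invokes. What survives of your proposal is the easy inclusion $\supset$ via Proposition~\ref{P.Pred} and the correct observation that the device of Lemma~\ref{L.Smin} (adjoining $\singleton$) is unavailable here; the substantive half of the lemma is missing.
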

\begin{proof}
In this case, the inclusion $\supset$ follows from Proposition~\ref{P.Pred}. For the converse, it is easy to see that $\PP_{(N)}NC'\nlin(k,l)=\spanlin\PP_{(N)}NC'(k,l)$. So, it remains to prove $\PP_{(N)}NC'\subset\langle\PP_{(N)}\Laaaa\rangle\nred$. We will show it in four steps.

\medskip\noindent\textsc{Step 1.}\space\textit{$\PP_{(N)}b_k\in\langle\PP_{(N)}\Laaaa\rangle\nred$ for all $k$ even.}

\smallskip\noindent
Here $b_k\in\Lart(0,k)$ is the block partition, that is, a partition where all points are contained in one block (recall the definition in Subsection \ref{secc.VP}). We will show the statement by induction. It holds for $k=2$ by the definition of reduced categories and for $k=4$ since $\PP_{(N)}b_4$ is the generator. Now, consider $k>4$ and suppose $\PP_{(N)} b_i\in\PP_{(N)}\langle\Laaaa\rangle\nred$ for all $i<k$ even. We compute that
\begin{align*}
&(\pi^{\otimes (k-4)}\otimes\PP_{(N)}\connecterpart\otimes\pi^{\otimes 2})(\pi^{\otimes (k-3)}\otimes\PP_{(N)}\Partition{\Pblock 0to0.3:1,2,3 \Pline(1,0.3)(1,1)})\PP_{(N)}b_{k-2}=\\
&=(\pi^{\otimes (k-4)}\otimes\PP_{(N)}\connecterpart\otimes\pi^{\otimes 2})(\PP_{(N)}b_k-\frac{1}{N}\PP_{(N)}b_{k-3}\otimes\PP_{(N)}\Laaa)=\\
&=\left(1-\frac{3}{N}\right)\PP_{(N)}b_k+\frac{1}{N^2}\Big(\PP_{(N)}b_{k-2}\otimes\PP_{(N)}\pairpart+R^{-2}(\PP_{(N)}b_{k-2}\otimes\PP_{(N)}\pairpart)\\&\qquad+\PP_{(N)}b_{k-4}\otimes\PP_{(N)}b_4\Big)-\frac{1}{N^3}\PP_{(N)}b_{k-4}\otimes\PP_{(N)}\pairpart\otimes\PP_{(N)}\pairpart.
\end{align*}
All the terms except for $\PP_{(N)}b_k$ are surely elements of the category $\langle\PP_{(N)}\Laaaa\rangle\nred$, so $\PP_{(N)}b_k$ must be as well.
The idea of the computation is maybe more clear using the pictorial representation for partitions. Using the definition $\PP_{(N)}p=\pi_{(N)}^{\otimes l}p\pi_{(N)}^{\otimes k}$ for $p\in\Lart\nlin(k,l)$ and the projective property $\pi_{(N)}\pi_{(N)}=\pi_{(N)}$, we can express the left hand side in the following form
$$
\BigPartition{
\Pblock 1.2to1.4:1,2,4,5,6
\Ptext (1,1){$\pi$}
\Ptext (2,1){$\pi$}
\Ptext (4,1){$\pi$}
\Ptext (5,1){$\pi$}
\Ptext (6,1){$\pi$}
\Pline (1,0.4) (1,0.8)
\Pline (2,0.4) (2,0.8)
\Pline (4,0.4) (4,0.8)
\Pline (5,0.4) (5,0.8)
\Pline (6,0.4) (6,0.8)
\Pblock 0.4to0.6:6,7,8
\Ptext (1,0.2){$\pi$}
\Ptext (2,0.2){$\pi$}
\Ptext (4,0.2){$\pi$}
\Ptext (5,0.2){$\pi$}
\Ptext (6,0.2){$\pi$}
\Ptext (7,0.2){$\pi$}
\Ptext (8,0.2){$\pi$}
\Pline (1,-0.4) (1,0)
\Pline (2,-0.4) (2,0)
\Pline (4,-0.4) (4,0)
\Pline (5.5,-0.3) (5.5,-0.1)
\Pline (7,-0.4) (7,0)
\Pline (8,-0.4) (8,0)
\Pblock -0.4 to -0.3:5,6
\Pblock 0 to -0.1:5,6
\Ptext (1,-0.6){$\pi$}
\Ptext (2,-0.6){$\pi$}
\Ptext (4,-0.6){$\pi$}
\Ptext (5,-0.6){$\pi$}
\Ptext (6,-0.6){$\pi$}
\Ptext (7,-0.6){$\pi$}
\Ptext (8,-0.6){$\pi$}
\Ptext (3,0.5){$\dots$}
}
=\PP\left(\BigPartition{
\Pblock -0.4to1.4:1,2,4
\Pblock  0.4to1.4:4,5
\Pblock  1.2to1.4:5,6
\Ptext (6,1){$\pi$}
\Pline (6,0.4)(6,0.8)
\Pblock 0.4to0.6:6,7
\Pblock -0.4to0.6:7,8
\Ptext (5,0.2){$\pi$}
\Ptext (6,0.2){$\pi$}
\Pblock 0to-0.1:5,6
\Pblock -0.4to-0.3:5,6
\Pline (5.5,-0.3)(5.5,-0.1)
\Ptext (3,0.5){$\dots$}
}\right).
$$
We obtain the result on the right-hand side simply by substituting $\pi=\idpart-\frac{1}{N}\disconnecterpart$.

\medskip\noindent\textsc{Step 2.}\space\textit{$\PP_{(N)}b_k\otimes\PP_{(N)}b_l\in\langle\PP_{(N)}\Laaaa\rangle\nred$ for any $k,l\ge 2$ such that $k+l$ is even.}

\smallskip\noindent
This can be seen inductively from the following
$$(\pi^{\otimes k}\otimes\PP_{(N)}\uppairpart\otimes\pi^{\otimes l})(\PP_{(N)}b_{k+1}\otimes\PP_{(N)}b_{l+1})=\PP_{(N)}b_{k+l}-\frac{1}{N}(\PP_{(N)}b_k\otimes\PP_{(N)}b_l).$$
Pictorially,
$$
\BigPartition{
\Pblock 0.8to1:1,2,4,5
\Pblock 0.8to1:6,7,9,10
\Ptext(1,0.6){$\pi$}
\Ptext(2,0.6){$\pi$}
\Ptext(3,0.9){$\dots$}
\Ptext(4,0.6){$\pi$}
\Ptext(5,0.6){$\pi$}
\Ptext(6,0.6){$\pi$}
\Ptext(7,0.6){$\pi$}
\Ptext(8,0.9){$\dots$}
\Ptext(9,0.6){$\pi$}
\Ptext(10,0.6){$\pi$}
\Pline(1,0)(1,0.4)
\Pline(2,0)(2,0.4)
\Pline(4,0)(4,0.4)
\Pline(7,0)(7,0.4)
\Pline(9,0)(9,0.4)
\Pline(10,0)(10,0.4)
\Pblock 0.4to0.2:5,6
\Ptext(1,-0.2){$\pi$}
\Ptext(2,-0.2){$\pi$}
\Ptext(3,0.2){$\dots$}
\Ptext(4,-0.2){$\pi$}
\Ptext(7,-0.2){$\pi$}
\Ptext(8,0.2){$\dots$}
\Ptext(9,-0.2){$\pi$}
\Ptext(10,-0.2){$\pi$}
}
=\PP\left({\textstyle\BigPartition{
\Pblock 0to1:1,2,4
\Pblock 0.8to1:4,5
\Pblock 0.8to1:6,7
\Pblock 0to1:7,9,10
\Ptext(3,0.5){$\dots$}
\Ptext(8,0.5){$\dots$}
\Ptext(5,0.6){$\pi$}
\Ptext(6,0.6){$\pi$}
\Pblock 0.4to0.2:5,6
}}\right)
=\PP\left(
\Partition{
\Pblock 0to0.6:1,2,4,5,7,8
\Ptext(3,0.3){$\dots$}
\Ptext(6,0.3){$\dots$}
}-\frac{1}{N}
\Partition{
\Pblock 0to0.6:1,2,4
\Pblock 0to0.6:5,7,8
\Ptext(3,0.3){$\dots$}
\Ptext(6,0.3){$\dots$}
}
\right).
$$

\medskip\noindent\textsc{Step 3.}\space\textit{$(\PP_{(N)}b_k)^*\otimes\idpart\otimes\PP_{(N)} b_k\in\langle\PP_{(N)}\Laaaa\rangle\nred$ for any $k\in\N$ (including the odd ones).}

\smallskip\noindent
For $k=1$ we have $\PP_{(N)}b_k=0$. Considering $k\ge 2$, the assertion follows from
\begin{align*}
&(\pi\otimes(\PP_{(N)}b_k)^*\otimes\PP_{(N)}b_k)((\PP_{(N)}b_k)^*\otimes\PP_{(N)}b_k\otimes\pi)=\\&=\left(\left(1-\frac{1}{N}\right)^{k-1}-\left(\frac{-1}{N}\right)^{k-1}\right)(\PP_{(N)}b_k)^*\otimes\idpart\otimes\PP_{(N)}b_k.
\end{align*}
Pictorially,
$$
\BigPartition{
\Ptext(1,1.2){$\pi$}
\Ptext(2,1.2){$\pi$}
\Ptext(3,1.2){$\pi$}
\Ptext(5,1.2){$\pi$}
\Ptext(6,1.2){$\pi$}
\Ptext(4,1){$\dots$}
\Ptext(4,0.7){$\dots$}
\Pblock 1to0.9:1,2,3,5
\Pblock 0.7to0.8:1,2,3,5
\Pline(6,0.7)(6,1)
\Ptext(1,0.5){$\pi$}
\Ptext(2,0.5){$\pi$}
\Ptext(3,0.5){$\pi$}
\Ptext(5,0.5){$\pi$}
\Ptext(6,0.5){$\pi$}
\Ptext(4,0.3){$\dots$}
\Ptext(4,0.0){$\dots$}
\Pblock 0to0.1:2,3,5,6
\Pblock 0.3to0.2:2,3,5,6
\Pline(1,0)(1,0.3)
\Ptext(1,-0.2){$\pi$}
\Ptext(2,-0.2){$\pi$}
\Ptext(3,-0.2){$\pi$}
\Ptext(5,-0.2){$\pi$}
\Ptext(6,-0.2){$\pi$}
}
=\PP\left(
\BigPartition{
\Ptext(4,1){$\dots$}
\Ptext(4,0.7){$\dots$}
\Pblock 1to0.9:1,2,3,5
\Pblock 0.7to0.8:1,2,3,5
\Pline(6,0.3)(6,1)
\Ptext(2,0.5){$\pi$}
\Ptext(3,0.5){$\pi$}
\Ptext(5,0.5){$\pi$}
\Ptext(4,0.3){$\dots$}
\Ptext(4,0.0){$\dots$}
\Pblock 0to0.1:2,3,5,6
\Pblock 0.3to0.2:2,3,5,6
\Pline(1,0)(1,0.7)
}\right)=\left(\left(1-\frac{1}{N}\right)^{k-1}-\left(\frac{-1}{N}\right)^{k-1}\right)\PP
\Partition{
\Pblock 1to0.5:1,2,3,5
\Pblock 0to0.5:6,7,9,10
\Pline (6,1)(5,0)
\Ptext (4,0.8){\dots}
\Ptext (8,0.2){\dots}
}.
$$

\medskip\noindent\textsc{Step 4.}\space\textit{$\PP_{(N)}p\in\langle\PP_{(N)}\Laaaa\rangle\nred$ for every $p\in NC'$.}

\smallskip\noindent
Without loss of generality we can assume that $p$ has lower points only since reduced categories are closed under rotations. Now, denote $l_1,\dots,l_n$ the sizes of the blocks in $p$ ordered in such a way that first come all even numbers and then all odd numbers. Since $p\in NC'$, we have that $\sum l_i$ is even, so there is an even number of odd numbers in the tuple $(l_i)$. We can construct $\PP_{(N)}p\in\PP_{(N)}\langle\PP_{(N)}\Laaaa\rangle\nred$ by computing $\PP_{(N)}b_{l_1}\otimes\cdots\otimes\PP_{(N)} b_{l_n}\in\langle\PP_{(N)}\Laaaa\rangle\nred$ and then using compositions with
$(\PP_{(N)} b_k)^*\otimes\pi\otimes\PP_{(N)}b_k=
\PP_{(N)}\Partition{
\Pblock 1to0.5:1,2,4,5
\Pblock 0to0.5:6,7,9,10
\Pline (6,1)(5,0)
\Ptext (3,0.8){\dots}
\Ptext (8,0.2){\dots}
}
\in\langle\PP_{(N)}\Laaaa\rangle\nred$
to move the blocks to their positions.
\end{proof}

We are now able to deal with the counterpart of the isomorphism $G=B_N^+\simeq O_{N-1}^+$ for the case of $G=S_N^+$.

\begin{prop}
\label{P.Vqg}
Suppose $N>3$. It holds that
\begin{eqnarray*}
\langle\V_{(N,\pm)}\fourpart\rangle\nnnlin&\text{corresponds to}&(S_N^+\tiltimes\hat\Z_2)^{\rm irr}_\pm=(S_N^+)^{\rm irr}_\pm\tiltimes\hat\Z_2,\\
\langle\V_{(N,\pm)}\Laaa\rangle\nnnlin&\text{corresponds to}&(S_N^+)^{\rm irr}_\pm.
\end{eqnarray*}
\end{prop}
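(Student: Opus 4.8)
The plan is to deduce both correspondences by chaining the functorial machinery of Section~\ref{sec.projection} with the two reduction lemmas just proved, and then to settle the displayed equality of quantum groups by a short computation with fundamental representations. All of the genuine combinatorial work is already contained in Lemmas~\ref{L.Smin} and~\ref{L.SSmin}; what remains is essentially formal.

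I would begin with the second line. Since $S_N^+$ corresponds to $NC\nlin$ and $\singleton\otimes\singleton\in NC\nlin$, Theorem~\ref{T.VK} gives that $(S_N^+)^{\rm irr}_\pm$ corresponds to $\V_{(N,\pm)}NC\nlin$, and I then run
\begin{align*}
\V_{(N,\pm)}NC\nlin&=\V_{(N,\pm)}\PP_{(N)}NC\nlin=\V_{(N,\pm)}\langle\PP_{(N)}\Laaa\rangle\nred\\
&=\langle\V_{(N,\pm)}\PP_{(N)}\Laaa\rangle\nnnlin=\langle\V_{(N,\pm)}\Laaa\rangle\nnnlin,
\end{align*}
where the outer equalities use $\V_{(N,\pm)}\PP_{(N)}=\V_{(N,\pm)}$ (Proposition~\ref{P.VP}, applied once to the whole set $NC\nlin$ and once to the single partition $\Laaa$), the second equality uses Lemma~\ref{L.Smin}, and the third uses Corollary~\ref{C.Vred}. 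The first line follows from the identical chain, now with $S_N^+\tiltimes\hat\Z_2=S_N'^+$ corresponding to $NC'\nlin\ni\singleton\otimes\singleton$ and Lemma~\ref{L.SSmin} (the sole place where the hypothesis $N>3$ is used) in place of Lemma~\ref{L.Smin}, yielding $\V_{(N,\pm)}NC'\nlin=\langle\V_{(N,\pm)}\Laaaa\rangle\nnnlin$. As $\fourpart$ and the block partition $\Laaaa=b_4$ coincide up to rotation, $\V_{(N,\pm)}$ commutes with rotations by Definition~\ref{D.VK}, and categories are closed under rotation, this last category equals $\langle\V_{(N,\pm)}\fourpart\rangle\nnnlin$.

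It then remains to verify the identity $(S_N^+\tiltimes\hat\Z_2)^{\rm irr}_\pm=(S_N^+)^{\rm irr}_\pm\tiltimes\hat\Z_2$ of compact matrix quantum groups, which I expect to be the only non-mechanical point. Writing $u$ for the fundamental representation of $S_N^+$ and $z$ for the self-adjoint unitary generating $\hat\Z_2$, the fundamental representation of $S_N^+\tiltimes\hat\Z_2$ has entries $u_{ij}z$; since $V_{(N,\pm)}$ has real scalar entries and $z$ is central, conjugation yields $\bigl(V_{(N,\pm)}(u\otimes z)V_{(N,\pm)}^*\bigr)_{ij}=v_{ij}z$, where $v=V_{(N,\pm)}uV_{(N,\pm)}^*$ is the fundamental representation of $(S_N^+)^{\rm irr}_\pm$. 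Thus both quantum groups are generated by the matrix $(v_{ij}z)$, and since $C(\hat\Z_2)$ is nuclear the maximal tensor products restrict compatibly to the subalgebra generated by the $v_{ij}$, so the two C*-algebras and their fundamental representations coincide. The only real obstacle is keeping the non-functoriality of $\V_{(N,\pm)}$ and $\PP_{(N)}$ under control while applying Corollary~\ref{C.Vred}; once that corollary and Proposition~\ref{P.VP} are in hand, everything else is bookkeeping plus this last compatibility check.
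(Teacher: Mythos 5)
Your proof is correct and follows essentially the same route as the paper: the same chain $\V_{(N,\pm)}NC\nlin=\V_{(N,\pm)}\PP_{(N)}NC\nlin=\V_{(N,\pm)}\langle\PP_{(N)}\Laaa\rangle\nred=\langle\V_{(N,\pm)}\Laaa\rangle\nnnlin$ (and its $NC'$ analogue) via Lemma~\ref{L.Smin}, Lemma~\ref{L.SSmin}, Proposition~\ref{P.VP} and Corollary~\ref{C.Vred}, followed by Theorem~\ref{T.VK} and the computation $V_{(N,\pm)}(su)V_{(N,\pm)}^*=sv$ for the glued-product identity. The two details you make explicit --- the rotation identifying $\langle\V_{(N,\pm)}\fourpart\rangle\nnnlin$ with $\langle\V_{(N,\pm)}\Laaaa\rangle\nnnlin$, and the nuclearity of $C^*(\Z_2)$ when comparing the ambient maximal tensor products --- are left implicit in the paper's proof, but they are refinements of the same argument rather than a different approach.
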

See Example \ref{ex.V} for the explicit form of $\V_{(N,\pm)}\Laaa$ and $\V_{(N,\pm)}\Laaaa$.
\begin{proof}
Using Lemma~\ref{L.Smin}, Proposition~\ref{P.VP} and Corollary~\ref{C.Vred}, we derive
$$\V_{(N,\pm)}NC\nlin=\V_{(N,\pm)}\PP_{(N)}NC\nlin=\V_{(N,\pm)}\langle\PP_{(N)}\Laaa\rangle\nred=\langle\V_{(N,\pm)}\Laaa\rangle\nnnlin,$$
so, according to Theorem~\ref{T.VK}, $\langle\V_{(N,\pm)}\Laaa\rangle\nnnlin$ corresponds to $(S_N^+)^{\rm irr}_\pm$.

Similarly, we deduce that $\langle\V_{(N,\pm)}\Laaaa\rangle\nnnlin$ corresponds to $(S_N^+\tiltimes\hat\Z_2)^{\rm irr}_\pm=V_{(N,\pm)}(S_N^+\tiltimes\hat\Z_2)V_{(N,\pm)}^*$. The quantum group $S_N^+\tiltimes\hat\Z_2$ is determined by the fundamental representation of the form $u'=su$, where $u$ is the fundamental representation of $S_N^+$, $s$ generates $C^*(\Z_2)$ and $su_{ij}=u_{ij}s$. We see that $V_{(N,\pm)}u'V_{(N,\pm)}^*=s(V_{(N,\pm)}uV_{(N,\pm)}^*)=sv$, where $v$ generates the quantum group $(S_N^+)^{\rm irr}_\pm$, so indeed $(S_N^+\tiltimes\hat\Z_2)^{\rm irr}_\pm=(S_N^+)^{\rm irr}_\pm\tiltimes\hat\Z_2$.
\end{proof}

\begin{prop}
\label{p.qg}
The categories
$$\begin{matrix}
\langle \PP_{(N)}\Laaa,\singleton\otimes\singleton\rangle\nlin  & \subset & \langle \PP_{(N)}\Laaa,\Labac\rangle\nlin  & \subset & NC\nlin\\
\subsetrot                                                      &         & \subsetrot                           & & \subsetrot\\
\langle \PP_{(N)}\Laaaa,\singleton\otimes\singleton\rangle\nlin & \subset & \langle \PP_{(N)}\Laaaa,\Labac\rangle\nlin & \subset & \langle \PP_{(N)}\Laaaa,\singleton\rangle\nlin\\
\subsetrot                                                      &         & \subsetrot                           &         & \subsetrot\\
\langle \singleton\otimes\singleton\rangle\nlin                 & \subset & \langle \Labac\rangle\nlin          & \subset & \langle\singleton\rangle\nlin\\
\end{matrix}$$
correspond to quantum groups $G:=U_{(N,\pm)}^*G'U_{(N,\pm)}$, where $G'$ equals to
$$\begin{matrix}
(S_N^+)^{\rm irr}_\pm*\hat\Z_2                       & \supset & (S_N^+)^{\rm irr}_\pm\times\hat\Z_2 & \supset & (S_N^+)^{\rm irr}_\pm\times E\\
\supsetrot                                              &         & \supsetrot                             &         & \supsetrot\\
((S_N^+)^{\rm irr}_\pm\tiltimes\hat\Z_2)*\hat\Z_2 & \supset & ((S_N^+)^{\rm irr}_\pm\tiltimes\hat\Z_2)\times\hat\Z_2 & \supset & ((S_N^+)^{\rm irr}_\pm\tiltimes\hat\Z_2)\times E\\
\supsetrot                                              &         & \supsetrot                             &         & \supsetrot\\
O_{N-1}^+*\hat\Z_2                                & \supset & O_{N-1}^+\times\hat\Z_2          & \supset & O_{N-1}^+\times E\\
\end{matrix}$$
and $E=(\C,(1))$ is the trivial quantum group.
\end{prop}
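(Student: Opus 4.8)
The plan is to read the entire table off as a threefold application of Theorem~\ref{T.redqg}, one application per row. Each row is governed by a single reduced category $\Kat\red$, while the three columns correspond exactly to the three cases $\langle\Kat\red\rangle\nlin$, $\langle\Kat\red,\Labac\rangle\nlin$, $\langle\Kat\red,\singleton\rangle\nlin$ treated by that theorem, producing respectively the free product $H*\hat\Z_2$, the tensor product $H\times\hat\Z_2$, and the trivial extension $H\times E$ (all conjugated by $U_{(N,\pm)}$). So it suffices to pin down, for each row, the reduced category $\Kat\red$ together with the quantum group $H$ corresponding to $\V_{(N,\pm)}\Kat\red$, and then to check that the three linear categories manufactured by the theorem literally coincide with the entries displayed in the grid.

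For the rows I would take $\Kat\red=\langle\PP_{(N)}\Laaa\rangle\nred$ (top), $\Kat\red=\langle\PP_{(N)}\Laaaa\rangle\nred$ (middle), and $\Kat\red=\langle\rangle\nred$, the smallest reduced category (bottom). The associated $H$ is identified from $\V_{(N,\pm)}\Kat\red$: by Corollary~\ref{C.Vred} and Proposition~\ref{P.VP} one has $\V_{(N,\pm)}\langle\PP_{(N)}\Laaa\rangle\nred=\langle\V_{(N,\pm)}\Laaa\rangle\nnnlin$ and $\V_{(N,\pm)}\langle\PP_{(N)}\Laaaa\rangle\nred=\langle\V_{(N,\pm)}\Laaaa\rangle\nnnlin$, which by Proposition~\ref{P.Vqg} correspond to $(S_N^+)^{\rm irr}_\pm$ and to $(S_N^+)^{\rm irr}_\pm\tiltimes\hat\Z_2$ respectively (this second identification is where the hypothesis $N>3$ enters, via Lemma~\ref{L.SSmin}). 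For the bottom row, since $\V_{(N,\pm)}$ fixes every pairing, Corollary~\ref{C.Vred} gives $\V_{(N,\pm)}\langle\rangle\nred=\langle\rangle\nnnlin$, the category of all pairings, so $H=O_{N-1}^+$ as in Example~\ref{ex.Bn}. Feeding these three pairs $(\Kat\red,H)$ into Theorem~\ref{T.redqg} produces exactly the nine quantum groups $G'$ of the second array.

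It then remains to match the nine categories. Writing $p_1,\dots,p_n$ for the generators of $\Kat\red$ (namely $\PP_{(N)}\Laaa$ in the top row, $\PP_{(N)}\Laaaa$ in the middle, and none in the bottom), the left column is immediate from Lemma~\ref{L.rednlin}, which yields $\langle\Kat\red\rangle\nlin=\langle p_1,\dots,p_n,\singleton\otimes\singleton\rangle\nlin$ (with no generators this reads $\langle\singleton\otimes\singleton\rangle\nlin$). For the middle column one has $\langle\Kat\red,\Labac\rangle\nlin=\langle p_1,\dots,p_n,\singleton\otimes\singleton,\Labac\rangle\nlin$, and since $\singleton\otimes\singleton\in\langle\Labac\rangle\nlin$ the double singleton is redundant, leaving $\langle p_1,\dots,p_n,\Labac\rangle\nlin$ as listed. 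The right column is handled identically, the double singleton now being absorbed by the generator $\singleton$, giving $\langle p_1,\dots,p_n,\singleton\rangle\nlin$; in the top row this reproduces $NC\nlin=\langle\PP_{(N)}\Laaa,\singleton\rangle\nlin$, the identity already used in Lemma~\ref{L.Smin}.

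The proof is essentially bookkeeping once Theorem~\ref{T.redqg}, Proposition~\ref{P.Vqg}, Example~\ref{ex.Bn} and Lemma~\ref{L.rednlin} are in hand. The only points demanding genuine care — and the closest thing to an obstacle — are correctly carrying the glued factor $\tiltimes\hat\Z_2$ attached to $H$ throughout the middle row, and verifying the redundancy of $\singleton\otimes\singleton$ once $\Labac$ or $\singleton$ is adjoined, so that the categories output by Theorem~\ref{T.redqg} are \emph{exactly} those displayed and not merely isomorphic to them. The inclusions drawn in the two grids are then immediate from the evident containments of the generating sets (and reversed for the corresponding quantum groups $G'$).
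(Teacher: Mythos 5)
Your proposal is correct and takes essentially the same route as the paper: Theorem~\ref{T.redqg} applied once per row, with $H$ identified through Proposition~\ref{P.Vqg} (resp.\ the $O_{N-1}^+$ case, cf.\ Example~\ref{ex.Bn}), and the nine categories matched to the displayed ones via Lemma~\ref{L.rednlin} together with the observation that $\singleton\otimes\singleton$ becomes redundant once $\Labac$ or $\singleton$ is adjoined; the paper merely phrases the rows as $\PP_{(N)}NC\nlin$, $\PP_{(N)}NC'\nlin$, $\PP_{(N)}\langle\singleton\rangle\nlin$ and uses Lemmas~\ref{L.Smin}, \ref{L.SSmin} to pass to your generator form. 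One harmless slip: $\langle\rangle\nnnlin$ is spanned by the \emph{non-crossing} pairings, not all pairings --- which is exactly why $H=O_{N-1}^+$ rather than $O_{N-1}$.
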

\begin{proof}
To prove this proposition, we just use Theorem~\ref{T.redqg} for each row. Let us have a look on the first row in more detail. Here, we take $\Kat\red:=\PP_{(N)}NC\nlin=\langle\PP_{(N)}\Laaa\rangle\nred$ (see Lemma~\ref{L.Smin}). According to Proposition~\ref{P.Vqg}, the linear category $\V_{(N,\pm)}\Kat\red=\langle\V_{(N,\pm)}\Laaa\rangle\nnnlin$ corresponds to the quantum group $H:=(S_N^+)^{\rm irr}_\pm$. From Theorem \ref{T.redqg} it follows that the quantum groups
$$\langle\Kat\red,\singleton\otimes\singleton\rangle\nlin\subset\langle\Kat\red,\Labac\rangle\nlin\subset\langle\Kat\red,\singleton\rangle\nlin$$
indeed correspond to the quantum groups given by the first row of the second table. Now, using Lemma~\ref{L.rednlin} we see that $\langle\Kat\red,\singleton\otimes\singleton\rangle\nlin=\langle\PP_{(N)}\Laaa,\singleton\otimes\singleton\rangle\nlin$. Noticing that both $\Labac$ and $\singleton$ generate $\singleton\otimes\singleton$, we can use Lemma \ref{L.rednlin} to prove also $\langle\Kat\red,\Labac\rangle\nlin=\langle\PP_{(N)}\Laaa,\Labac\rangle\nlin$ and $\langle\Kat\red,\singleton\rangle\nlin=\langle\PP_{(N)}\Laaa,\singleton\rangle\nlin$. Finally, it is easy to see that the latter category equals to the lineear category spanned by all non-crossing partitions $NC\nlin$. Indeed, note that $NC\nlin$ is generated by $\Laaa$, which is clearly contained in $\langle\PP_{(N)}\Laaa,\singleton\rangle\nlin$.

The second and third line can be proven by exactly the same argumentation as the first one. This time using Lemma~\ref{L.SSmin} resp. Example \ref{ex.Bn}.
\end{proof}

\begin{rem}
Note that the lower lines of the diagrams in Proposition~\ref{p.qg} reveal again the well-known isomorphisms $B_N^+\simeq O_{N-1}^+$, $B_N'^+\simeq O_{N-1}\times\hat\Z_2$, and $B_N^{\# +}\simeq O_{N-1}*\hat\Z_2$ discovered in \cite[Theorem 4.1]{Rau12} and \cite[Proposition 5.2]{Web13}. See also Example \ref{ex.Bn}.

\end{rem}

\subsection{The crossing partition}

\begin{prop}
\label{P.cross}
We have the following inclusions.
$$\begin{matrix}
\langle\crosspart,\singleton\otimes\singleton\rangle\nlin&=&\langle\crosspart,\Labac\rangle\nlin&\subsetneq&\langle\crosspart,\singleton\rangle\nlin\\
\subsetneqrot&&\eqrot&&\eqrot\\
\langle\PP_{(N)}\crosspart,\singleton\otimes\singleton\rangle\nlin&\subsetneq&\langle\PP_{(N)}\crosspart,\Labac\rangle\nlin&\subsetneq&\langle\PP_{(N)}\crosspart,\singleton\rangle\nlin
\end{matrix}$$
Those categories correspond to quantum groups $G:=U_{(N,\pm)}^*G'U_{(N,\pm)}$, where $G'$ equals to
$$O_{N-1}*\hat\Z_2\supsetneq O_{N-1}\times\hat\Z_2\supsetneq O_{N-1}\times E.$$
\end{prop}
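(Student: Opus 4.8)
The plan is to reduce the whole statement to Theorem~\ref{T.redqg} applied to the reduced category $\Kat\red:=\langle\PP_{(N)}\crosspart\rangle\nred$, and then to glue the top row of the diagram onto the bottom row by means of two category identities.

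First I would identify the associated $(N-1)$-dimensional quantum group $H$. Since $\crosspart$ consists of two blocks of size two and $\V_{(N,\pm)}$ acts as the identity on partitions all of whose blocks have size two (Example~\ref{ex.V}), we have $\V_{(N,\pm)}\crosspart=\crosspart$. Combining Proposition~\ref{P.VP} (so that $\V_{(N,\pm)}\PP_{(N)}\crosspart=\V_{(N,\pm)}\crosspart$) with Corollary~\ref{C.Vred} then yields $\V_{(N,\pm)}\Kat\red=\langle\V_{(N,\pm)}\PP_{(N)}\crosspart\rangle\nnnlin=\langle\crosspart\rangle\nnnlin$, which is the category of the classical orthogonal group $O_{N-1}$. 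Hence $H=O_{N-1}$, and Theorem~\ref{T.redqg} together with Lemma~\ref{L.rednlin} shows that the bottom-row categories $\langle\PP_{(N)}\crosspart,\singleton\otimes\singleton\rangle\nlin=\langle\Kat\red\rangle\nlin$, $\langle\PP_{(N)}\crosspart,\Labac\rangle\nlin=\langle\Kat\red,\Labac\rangle\nlin$ and $\langle\PP_{(N)}\crosspart,\singleton\rangle\nlin=\langle\Kat\red,\singleton\rangle\nlin$ correspond to $U_{(N,\pm)}^*(O_{N-1}*\hat\Z_2)U_{(N,\pm)}$, $U_{(N,\pm)}^*(O_{N-1}\times\hat\Z_2)U_{(N,\pm)}$ and $U_{(N,\pm)}^*(O_{N-1}\times E)U_{(N,\pm)}$ respectively. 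The strictness of the bottom-row inclusions is precisely the remark following Theorem~\ref{T.redqg}.

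Next I would prove the two vertical equalities in the middle and right columns. Using the explicit expansion $\PP_{(N)}\crosspart=\crosspart-\frac1N(\Pabcb+\Pabac)+\frac1{N^2}\Pabcd$, the inclusion ``$\subseteq$'' is immediate because $\pi_{(N)}=\idpart-\frac1N\disconnecterpart$ lies in $\langle\Labac\rangle\nlin$ (as $\singleton\otimes\singleton\in\langle\Labac\rangle\nlin$), whence $\PP_{(N)}\crosspart=\pi_{(N)}^{\otimes2}\crosspart\,\pi_{(N)}^{\otimes2}\in\langle\crosspart,\Labac\rangle\nlin$. For ``$\supseteq$'' I would solve the same identity for $\crosspart$ and observe that the correction terms $\Pabac$ and $\Pabcb$ are rotations of $\Labac$, while $\Pabcd$ is a tensor power of $\singleton\otimes\singleton$; all of them therefore lie in $\langle\Labac\rangle\nlin$, so $\crosspart\in\langle\PP_{(N)}\crosspart,\Labac\rangle\nlin$. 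This gives $\langle\crosspart,\Labac\rangle\nlin=\langle\PP_{(N)}\crosspart,\Labac\rangle\nlin$, and the identical argument with $\singleton$ in place of $\Labac$ yields $\langle\crosspart,\singleton\rangle\nlin=\langle\PP_{(N)}\crosspart,\singleton\rangle\nlin$. The top-row equality $\langle\crosspart,\singleton\otimes\singleton\rangle\nlin=\langle\crosspart,\Labac\rangle\nlin$ is the identity already recorded in Subsection~\ref{secc.easy}.

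Finally I would assemble the diagram. The two column equalities identify the top-middle and top-right entries with the corresponding bottom entries, and the top-row equality identifies the top-left entry with the top-middle one; thus the top-left category equals $\langle\PP_{(N)}\crosspart,\Labac\rangle\nlin$, which strictly contains the bottom-left category $\langle\PP_{(N)}\crosspart,\singleton\otimes\singleton\rangle\nlin$ by the bottom-row strictness established above. All remaining inclusions, and the quantum-group identifications $O_{N-1}*\hat\Z_2\supsetneq O_{N-1}\times\hat\Z_2\supsetneq O_{N-1}\times E$, then read off directly from the bottom row. The only genuine computation is the pair of column equalities, and the point to get right there is that every singleton-correction term produced by $\PP_{(N)}$ acting on $\crosspart$ is itself generated by $\Labac$ (respectively $\singleton$); this works precisely because $\crosspart$ has only blocks of size two, so cutting legs produces nothing beyond pairings and singletons. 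The underlying strictness of the three quantum groups — that the free product surjects properly onto the tensor product and that $\hat\Z_2$ is nontrivial — is exactly what justifies the cited remark, so no separate argument is needed here.
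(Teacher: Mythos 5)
Your proposal is correct and follows essentially the same route as the paper: both reduce the statement to Theorem~\ref{T.redqg} applied to $\Kat\red=\langle\PP_{(N)}\crosspart\rangle\nred$ (using $\V_{(N,\pm)}\PP_{(N)}\crosspart=\V_{(N,\pm)}\crosspart=\crosspart$ to identify $H=O_{N-1}$), obtain the vertical equalities from the explicit expansion of $\PP_{(N)}\crosspart$, and take the top-row equality as known from \cite{Web13}. The only difference is one of detail: you spell out the two-sided inclusion argument for the column equalities (the correction terms being rotations of $\Labac$, resp.\ lying in $\langle\singleton\otimes\singleton\rangle\nlin$), which the paper compresses into ``easy to see''.
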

\begin{proof}
The equality and inclusion in the first line is known \cite{Web13} (see also Subsection \ref{secc.easy}). The vertical equalities are easy to see if we write
$$\PP_{(N)}\crosspart=\crosspart-\frac{1}{N}\Pabac-\frac{1}{N}\Pabcb+\frac{1}{N^2}\Pabcd.$$
%

Note that $\V_{(N,\pm)}\PP_{(N)}\crosspart=\V_{(N,\pm)}\crosspart=\crosspart$, so $\V_{(N,\pm)}\langle\PP_{(N)}\crosspart\rangle\nred=\langle\crosspart\rangle\nnnlin$, which corresponds to the (quantum) group $O_{N-1}$. Now, the strictness of the inclusions in the second row as well as the quantum group picture follow from Theorem~\ref{T.redqg} with $\Kat\red:=\langle\PP_{(N)}\crosspart\rangle\nred$.
\end{proof}

Similarly, we can add the partition $\PP_{(N)}\crosspart$ to all the categories in Proposition~\ref{p.qg} and obtain the same quantum groups ``without the plusses''.

The meaning of the element $\PP_{(N)}\crosspart$ can be seen from the corresponding C*-algebraical relation, which is $\left(u_{ij}-\frac{1}{N}r\right)\left(u_{kl}-\frac{1}{N}r\right)=\left(u_{kl}-\frac{1}{N}r\right)\left(u_{ij}-\frac{1}{N}r\right)$. This is equivalent to say that $ab=ba$ for every $a,b\in\spanlin\left\{u_{ij}-\frac{1}{N}r\right\}=\spanlin\{v_{ij}\}$, where $v=V_{(N,\pm)}uV_{(N,\pm)}^*$.

\begin{rem}
Applying $\PP_{(N)}$ to one of the vertical equalities in Proposition~\ref{P.cross} and using Lemma~\ref{L.rednlin} and Proposition \ref{P.redP} as in the proof of Lemma \ref{L.Smin}, we get that
$$\PP_{(N)}\langle\crosspart,\Labac\rangle\nlin=\PP_{(N)}\langle\crosspart,\singleton\rangle\nlin=\langle\PP_{(N)}\crosspart\rangle\nred.$$
Thus, we also have
$$\V_{(N,\pm)}\langle\crosspart,\Labac\rangle\nlin=\V_{(N,\pm)}\langle\crosspart,\singleton\rangle\nlin=\langle\crosspart\rangle\nnnlin.$$
This can be seen also from the quantum group picture. If $\langle\crosspart,\singleton\rangle\nlin$ corresponds to $U_{(N,\pm)}^*(O_{N-1}\times E)U_{(N,\pm)}$ then the map $\V_{(N,\pm)}$ should yield the category corresponding to $O_{N-1}$, which is precisely $\langle\crosspart\rangle\nnnlin$.
\end{rem}

\subsection{Half-liberations}

\begin{prop}
\label{P.hl}
The categories
$$\langle\PP_{(N)}\halflibpart,\singleton\otimes\singleton\rangle\nlin \subsetneq \langle\PP_{(N)}\halflibpart,\Labac\rangle\nlin \subsetneq \langle\PP_{(N)}\halflibpart,\singleton\rangle\nlin$$
correspond to quantum groups $G:=U_{(N,\pm)}^*G'U_{(N,\pm)}$, where $G'$ equals to
$$O_{N-1}^**\hat\Z_2\supsetneq O_{N-1}^*\times\hat\Z_2\supsetneq O_{N-1}^*\times E.$$
\end{prop}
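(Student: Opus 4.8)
The plan is to follow verbatim the pattern of the proofs of Proposition~\ref{P.cross} and Proposition~\ref{p.qg}, applying Theorem~\ref{T.redqg} to the reduced category $\Kat\red:=\langle\PP_{(N)}\halflibpart\rangle\nred$. The one piece of genuine input is the identification of the quantum group $H$ with $S_{N-1}\subset H\subset O_{N-1}^+$ attached to $\V_{(N,\pm)}\Kat\red$. Here the key observation is that $\halflibpart$ is a pairing, i.e.\ all its blocks have size two, so by Example~\ref{ex.V} (where $\V_{(N,\pm)}\pairpart=\pairpart$) and the fact that $\V_{(N,\pm)}$ acts blockwise, we get $\V_{(N,\pm)}\halflibpart=\halflibpart$. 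Combining this with Proposition~\ref{P.VP} gives $\V_{(N,\pm)}\PP_{(N)}\halflibpart=\V_{(N,\pm)}\halflibpart=\halflibpart$, and Corollary~\ref{C.Vred} then yields
$$\V_{(N,\pm)}\Kat\red=\langle\V_{(N,\pm)}\PP_{(N)}\halflibpart\rangle\nnnlin=\langle\halflibpart\rangle\nnnlin.$$
This is precisely the category (in dimension $N-1$) defining the half-liberated orthogonal quantum group, so $H=O_{N-1}^*$. This is the exact analogue of the crossing case, where $\V_{(N,\pm)}\PP_{(N)}\crosspart=\crosspart$ produced $O_{N-1}$.

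With $H=O_{N-1}^*$ in hand, Theorem~\ref{T.redqg} immediately identifies the three quantum groups $U_{(N,\pm)}^*(O_{N-1}^**\hat\Z_2)U_{(N,\pm)}$, $U_{(N,\pm)}^*(O_{N-1}^*\times\hat\Z_2)U_{(N,\pm)}$ and $U_{(N,\pm)}^*(O_{N-1}^*\times E)U_{(N,\pm)}$ as corresponding respectively to $\langle\Kat\red\rangle\nlin$, $\langle\Kat\red,\Labac\rangle\nlin$ and $\langle\Kat\red,\singleton\rangle\nlin$. It then only remains to rewrite these three categories in the generator form stated. By Lemma~\ref{L.rednlin} we have $\langle\Kat\red\rangle\nlin=\langle\PP_{(N)}\halflibpart,\singleton\otimes\singleton\rangle\nlin$. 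Since both $\Labac$ and $\singleton$ generate $\singleton\otimes\singleton$ (recalled in Subsection~\ref{secc.easy}), adjoining either of them already forces $\singleton\otimes\singleton$ into the category, whence $\langle\Kat\red,\Labac\rangle\nlin=\langle\PP_{(N)}\halflibpart,\Labac\rangle\nlin$ and $\langle\Kat\red,\singleton\rangle\nlin=\langle\PP_{(N)}\halflibpart,\singleton\rangle\nlin$, exactly as argued in the proof of Proposition~\ref{p.qg}.

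Finally, the strictness of both inclusions is automatic: the Remark following Theorem~\ref{T.redqg} guarantees that the chain $\langle\Kat\red\rangle\nlin\subset\langle\Kat\red,\Labac\rangle\nlin\subset\langle\Kat\red,\singleton\rangle\nlin$ is always strict, and the corresponding quantum group chain $O_{N-1}^**\hat\Z_2\supsetneq O_{N-1}^*\times\hat\Z_2\supsetneq O_{N-1}^*\times E$ follows. I expect no serious obstacle: the proof is almost entirely formal bookkeeping on top of the machinery already developed, and the only non-routine step is the identification $H=O_{N-1}^*$, which reduces to the standard easy-quantum-group fact that $\langle\halflibpart\rangle$ defines the half-liberation, made transparent here by $\V_{(N,\pm)}\halflibpart=\halflibpart$.
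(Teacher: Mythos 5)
Your proposal is correct and follows essentially the same route as the paper's proof: establish $\V_{(N,\pm)}\PP_{(N)}\halflibpart=\V_{(N,\pm)}\halflibpart=\halflibpart$ (the paper states this directly, you justify it via the blockwise action on pairings), identify $H=O_{N-1}^*$ from $\V_{(N,\pm)}\langle\PP_{(N)}\halflibpart\rangle\nred=\langle\halflibpart\rangle\nnnlin$, and conclude by Theorem~\ref{T.redqg}. You additionally spell out the generator-rewriting via Lemma~\ref{L.rednlin} and the strictness via the remark following Theorem~\ref{T.redqg}, steps the paper leaves implicit by analogy with Propositions~\ref{p.qg} and~\ref{P.cross}, but this is elaboration rather than a different argument.
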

\begin{proof}
Again we have $\V_{(N,\pm)}\PP_{(N)}\halflibpart=\V_{(N,\pm)}\halflibpart=\halflibpart$, so $\V_{(N,\pm)}\langle\PP_{(N)}\halflibpart\rangle\nred=\langle\halflibpart\rangle\nnnlin$, which corresponds to the quantum group $O_{N-1}^*$. Thus, the statement follows from Theorem~\ref{T.redqg}.
\end{proof}

Again, the meaning of the element $\PP_{(N)}\halflibpart$ can be seen from the corresponding C*-algebraical relation, which can be written as $abc=cba$ for every $a,b,c\in\spanlin\left\{u_{ij}-\frac{1}{N}r\right\}=\spanlin\{v_{ij}\}$, where $v=V_{(N,\pm)}uV_{(N,\pm)}^*$.

The category $\langle\PP_{(N)}\halflibpart,\singleton\rangle\nlin$ and the corresponding quantum group was recently independently studied by Banica \cite{Ban18uniform}.

\begin{rem}
In this case, we have no counterpart of the ``vertical equalities'' of Proposition~\ref{P.cross}. In fact, we have
$$\begin{matrix}
\langle \PP_{(N)}\halflibpart,\singleton\otimes\singleton\rangle\nlin  & \subsetneq & \langle \halflibpart,\singleton\otimes\singleton\rangle\nlin  & \subsetneq & \langle\crosspart,\singleton\otimes\singleton\rangle\nlin\\
\supsetneqrot                                                          &            & \supsetneqrot                                                 &            & \eqrot\\
\langle \PP_{(N)}\halflibpart,\Labac\rangle\nlin                       & \subsetneq & \langle \halflibpart,\Labac\rangle\nlin                       & =          & \langle\crosspart,\Labac\rangle\nlin\\
\supsetneqrot                                                          &            & \supsetneqrot                                                 &            & \supsetneqrot\\
\langle \PP_{(N)}\halflibpart,\singleton\rangle\nlin                   & \subsetneq & \langle \halflibpart,\singleton\rangle\nlin                   & =          & \langle\crosspart,\singleton\rangle\nlin
\end{matrix}$$
The inequalities in the last two rows can be seen after applying $\V_{(N,\pm)}$ since we get $\langle\halflibpart\rangle\nnnlin$ on the left-hand side, but $\langle\crosspart\rangle\nnnlin$ on the right-hand side.
To see the inequality in the first row, note for example that $\langle\halflibpart,\singleton\otimes\singleton\rangle\nlin$ contains the linear combination $(\disconnecterpart\otimes\pi\otimes\pi)\Pabcabc$ corresponding to the relation $abr=rba$ for $a,b\in\spanlin\{v_{ij}\}$. Therefore, the quantum group corresponding to $\langle\halflibpart,\singleton\otimes\singleton\rangle\nlin$ contains a relation between $v$ and $r$, so it must be a strict quantum subgroup of the free product $O_{N-1}^**\hat\Z_2$.

Nevertheless, using the explicit description of the category $\langle\halflibpart,\singleton\otimes\singleton\rangle\nlin$ presented in \cite[Proposition 3.5]{Web13}, it is possible to show that
$$\PP_{(N)}\langle\halflibpart,\singleton\otimes\singleton\rangle\nlin=\langle\PP_{(N)}\halflibpart\rangle\nred.$$
\end{rem}

\section{Categories of partitions for similar CMQGs}
\label{sec.similar}

In this section, we apply a procedure analogous to Section \ref{sec.isometry} to construct similar quantum groups. Consider a compact matrix quantum group $G$ such that $S_N\subset G\subset O_N^+$, so $G$ is described by some linear category of partitions $\Kat$. We are going to find a regular matrix $T\in M_N(\C)$ such that $S_N\subset TGT^{-1}\subset O_N^+$, so $\tilde G:=TGT^{-1}$ is also described by some linear category $\tilde\Kat$. We are going to find an explicit description of this category.

%
%
%
%
%

\begin{defn}
We define $\tau_{(N)}:=\idpart-\frac{2}{N}\disconnecterpart\in\Lart\nlin(1,1)$. We define a linear map $\T_{(N)}\colon\Lart\nlin\to\Lart\nlin$ mapping $p\mapsto \tau_{(N)}^{\otimes l}p\tau_{(N)}^{\otimes k}$ for $p\in\Lart\nlin(k,l)$.
\end{defn}

\begin{rem}
\label{R.tau}
Note that it holds that $\tau_{(N)}\tau_{(N)}=\idpart$ and $\tau_{(N)}^*=\tau_{(N)}$. Therefore, the same holds for the map $T_{\tau_{(N)}}$. That is, $T_{\tau_{(N)}}^{-1}=T_{\tau_{(N)}}=T_{\tau_{(N)}}^*$, so $T_{\tau_{(N)}}$ is a self-adjoint unitary.
\end{rem}

\begin{rem}
Note the subtle distinction between the linear combinations $\tau_{(N)}=\idpart-\frac{2}{N}\disconnecterpart$ and $\pi_{(N)}=\idpart-\frac{1}{N}$ that quite changes their properties and their meaning. Note also the similarity between the matrix $T_{\tau_{(N)}}$ having entries $T_{ij}=\delta_{ij}-\frac{2}{N}$ and the matrices $U_{(N+1,\pm)}$, $V_{(N+1,\pm)}$ or $P_{(N)}$.
\end{rem}

\begin{thm}
\label{T.Tiso}
Let $G=(C(G),u)$, $S_N\subset G\subset O_N^+$ be a CMQG corresponding to a category $\Kat\subset\Lart\nlin$. Denote $\tilde G:=T_{\tau_{(N)}}GT_{\tau_{(N)}}^{-1}:=(C(G),T_{\tau_{(N)}}uT_{\tau_{(N)}}^{-1})$. Then
\begin{enumerate}	
\item $S_N\subset\tilde G\subset O_N^+$, so $\tilde G$ corresponds to some category $\tilde\Kat$,
\item $\tilde\Kat=\T_{(N)}\Kat=\{\tau_{(N)}^{\otimes l}p\tau_{(N)}^{\otimes k}\mid p\in\Kat(k,l)\}_{k,l\in\N_0}$,
\item $\T_{(N)}$ is a monoidal $*$-isomorphism.
\item If $\singleton\otimes\singleton\in\Kat$ then $G=\tilde G$.
\end{enumerate}
\end{thm}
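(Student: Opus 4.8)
The plan is to treat the four claims in the order (3), then (1)--(2) together, then (4), since the functorial properties of $\T_{(N)}$ underlie everything else. First I would record the two facts from Remark~\ref{R.tau}, namely $\tau_{(N)}\tau_{(N)}=\idpart$ and $\tau_{(N)}^*=\tau_{(N)}$, and use them to verify part~(3) by direct check (abbreviating $\tau:=\tau_{(N)}$). For composition, given $p\in\Lart\nlin(k,l)$ and $q\in\Lart\nlin(l,m)$ one computes $\T_{(N)}(q)\,\T_{(N)}(p)=\tau^{\otimes m}q(\tau^{\otimes l}\tau^{\otimes l})p\tau^{\otimes k}=\tau^{\otimes m}qp\tau^{\otimes k}=\T_{(N)}(qp)$, the middle $\tau\tau$ collapsing to the identity; compatibility with tensor product is immediate from the definition, and $\tau^*=\tau$ gives $(\T_{(N)}p)^*=\T_{(N)}(p^*)$. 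Bijectivity is free of charge: $\T_{(N)}^2=\mathrm{id}$, again by $\tau\tau=\idpart$, so $\T_{(N)}$ is its own inverse, hence a monoidal $*$-isomorphism of $\Lart\nlin$ onto itself, restricting to an isomorphism $\Kat\to\T_{(N)}\Kat$.

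For parts~(1) and~(2) I would, writing $T_\tau:=T_{\tau_{(N)}}$, first note that $T_\tau$ is a real symmetric self-inverse matrix (Remark~\ref{R.tau}), hence orthogonal, so $\tilde u=T_\tau uT_\tau$ inherits orthogonality from $u$: using $\bar u=u$ and $T_\tau^t=T_\tau=T_\tau^{-1}$ one gets $\bar{\tilde u}=\tilde u$ and $\tilde u\tilde u^t=T_\tau uu^tT_\tau=1$, and likewise $\tilde u^t\tilde u=1$, giving $\tilde G\subset O_N^+$. The category then comes from the standard description of intertwiners of a similar representation, $\Mor(\tilde u^{\otimes k},\tilde u^{\otimes l})=T_\tau^{\otimes l}\Mor(u^{\otimes k},u^{\otimes l})T_\tau^{\otimes k}$, into which I substitute $\Mor(u^{\otimes k},u^{\otimes l})=\{T_p\mid p\in\Kat(k,l)\}$ and apply functoriality of $T_\bullet$ to obtain $T_\tau^{\otimes l}T_pT_\tau^{\otimes k}=T_{\tau^{\otimes l}p\tau^{\otimes k}}=T_{\T_{(N)}p}$. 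Thus the intertwiner spaces of $\tilde G$ are exactly $\{T_q\mid q\in\T_{(N)}\Kat\}$; since $\T_{(N)}\Kat$ is a linear category of partitions by part~(3), Corollary~\ref{C.Tannaka} identifies it as the category of $\tilde G$ and simultaneously yields $S_N\subset\tilde G\subset O_N^+$.

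The only place where the hypothesis of part~(4) is genuinely used, and what I regard as the crux of the statement, is the observation that $\singleton\otimes\singleton\in\Kat$ forces $\tau_{(N)}$ itself into $\Kat$. Indeed $\disconnecterpart$ is a rotation of $\singleton\otimes\singleton$, so closure of $\Kat$ under rotation gives $\disconnecterpart\in\Kat(1,1)$ and hence $\tau_{(N)}=\idpart-\tfrac{2}{N}\disconnecterpart\in\Kat(1,1)$. Then for any $p\in\Kat(k,l)$ the element $\T_{(N)}p=\tau_{(N)}^{\otimes l}p\,\tau_{(N)}^{\otimes k}$ is built from elements of $\Kat$ by tensor products and composition, so it lies in $\Kat$; this shows $\T_{(N)}\Kat\subset\Kat$, and applying the involution $\T_{(N)}$ once more yields the reverse inclusion, so $\T_{(N)}\Kat=\Kat$. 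By part~(2) the categories of $G$ and $\tilde G$ coincide, and the uniqueness in Tannaka--Krein duality (Corollary~\ref{C.Tannaka}) then gives $G=\tilde G$. The remaining steps are routine; the main subtlety to keep in mind is to distinguish the mere similarity $\tilde G=T_{\tau_{(N)}}GT_{\tau_{(N)}}^{-1}$, valid for all $\Kat$, from the genuine identity $G=\tilde G$, which hinges precisely on $\tau_{(N)}\in\Kat$.
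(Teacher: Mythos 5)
Your proposal is correct, and for parts (2)--(4) it runs along essentially the same lines as the paper: the conjugation formula $\Mor(\tilde u^{\otimes k},\tilde u^{\otimes l})=T_{\tau_{(N)}}^{\otimes l}\Mor(u^{\otimes k},u^{\otimes l})T_{\tau_{(N)}}^{\otimes k}$ plus functoriality of $T_\bullet$ for (2), the relations $\tau_{(N)}\tau_{(N)}=\idpart$ and $\tau_{(N)}^*=\tau_{(N)}$ for (3) (which the paper dismisses as ``straightforward'' and you actually carry out), and $\tau_{(N)}\in\Kat$ combined with $\T_{(N)}^2=\mathrm{id}$ for (4). The one place where you genuinely diverge is the inclusion $S_N\subset\tilde G$ in part (1). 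The paper proves it directly: $T_{\tau_{(N)}}$, like every $T_p$ with $p\in\Lart\nlin(1,1)$, is an intertwiner of the fundamental representation $u'$ of $S_N$, so $T_{\tau_{(N)}}u'T_{\tau_{(N)}}^{-1}=u'$ and the surjection $C(G)\to C(S_N)$ witnessing $S_N\subset G$ also witnesses $S_N\subset\tilde G$. You instead deduce it from part (2) via the converse direction of Corollary~\ref{C.Tannaka}, which is a legitimate alternative but makes your proof of (1) depend on knowing that $\T_{(N)}\Kat$ is a linear category of partitions.

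That dependence hides the only real gap in your write-up: you assert that $\T_{(N)}\Kat$ is a linear category of partitions ``by part (3)'', but part (3) only gives closure under the category operations; the definition also requires $\idpart,\pairpart\in\T_{(N)}\Kat$, and a monoidal $*$-isomorphism has no a priori reason to fix the pair partition. Moreover, since $T_\bullet$ is not injective, it is not enough to observe that the orthogonal matrix $T_{\tau_{(N)}}$ preserves the duality vector -- one needs the identity at the level of linear combinations of partitions. Fortunately it holds:
$$\T_{(N)}\pairpart=\tau_{(N)}^{\otimes 2}\,\pairpart=\pairpart-\tfrac{2}{N}\cdot 2\,(\singleton\otimes\singleton)+\tfrac{4}{N^2}\cdot N\,(\singleton\otimes\singleton)=\pairpart,$$
and likewise $\T_{(N)}\idpart=\tau_{(N)}\tau_{(N)}=\idpart$, so $\idpart,\pairpart\in\T_{(N)}\Kat$ and your invocation of Corollary~\ref{C.Tannaka} goes through. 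Alternatively, you could bypass the category-membership question entirely: part (2) shows $\Mor(\tilde u^{\otimes k},\tilde u^{\otimes l})=\{T_q\mid q\in\T_{(N)}\Kat(k,l)\}\subset\{T_q\mid q\in\Lart\nlin(k,l)\}=\Mor(u'^{\otimes k},u'^{\otimes l})$, and this containment of intertwiner spaces already yields $S_N\subset\tilde G$ by Tannaka--Krein. With either one-line patch your argument is complete; the paper's direct argument simply avoids the issue, at the cost of quoting the full intertwiner description of $S_N$.
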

\begin{proof}
The map $T_{\tau_{(N)}}$ is an intertwiner of $S_N$ (as any map $T_p$ for $p\in\Lart\nlin$). Thus, taking $u'$ the fundamental representation of $S_N\subset G$, we have that $T_{\tau_{(N)}}u'T_{\tau_{(N)}}^{-1}=u'$, so $S_N\subset T_{\tau_{(N)}}GT_{\tau_{(N)}}^{-1}$. In Remark \ref{R.tau} we mentioned that $T_{\tau_{(N)}}$ is orthogonal, which implies $T_{\tau_{(N)}}GT_{\tau_{(N)}}^{-1}\subset O_N^+$.

For the second point we can see that for any $p\in\Lart\nlin(k)$ we have that $u^{\otimes l}T_p=T_pu^{\otimes k}$ if and only if $v^{\otimes l}T_{\tau^{\otimes l}p\tau^{\otimes k}}=T_{\tau^{\otimes l}p\tau^{\otimes k}}v^{\otimes k}$, where $v=T_{\tau_{(N)}}uT_{\tau_{(N)}}^{-1}$.

Proving the isomorphism property is straightforward.

Finally, if $\singleton\otimes\singleton\in\Kat$, then $\tau_{(N)}\in\Kat$, which implies that $\tilde\Kat=\T_{(N)}\Kat\subset\Kat$. From the isomorphism property $\tilde\Kat=\Kat$.
\end{proof}

\begin{rem}
The implication in the last point cannot be reversed. For example, we can see that $\T_{(N)}\crosspart=\crosspart$, so $\T_{(N)}\langle\crosspart\rangle\nlin=\langle\crosspart\rangle\nlin$ although $\singleton\otimes\singleton\not\in\langle\crosspart\rangle\nlin$.
\end{rem}

\begin{ex}
\label{E.tfour}
As a non-trivial example, we can compute that
\begin{align*}
\T_{(N)}\fourpart&=\fourpart-{2\over N}(\Laaab+\Laaba+\Labaa+\Labbb)\\&+{4\over N^2}(\Laabc+\Labac+\Labbc+\Labca+\Labcb+\Labcc)-{16\over N^3}\Labcd.
\end{align*}

From Theorem~\ref{T.Tiso} it follows that the category $\langle\fourpart\rangle\nlin$ is isomorphic to the category $\langle \T_{(N)}\fourpart\rangle\nlin$ and they describe similar quantum groups. Since $\T_{(N)}\fourpart\not\in\langle\fourpart\rangle\nlin$, we see that the categories are not equal, so the quantum groups are not identical. As a consequence $\langle\T_{(N)}\fourpart\rangle\nlin$ is a non-easy linear category of partitions corresponding to a compact matrix quantum group similar to $H_N^+$ but not identical with $H_N^+$.
\end{ex}

\begin{prop}
\label{P.tauirr}
Consider a compact matrix quantum group $G$ such that $S_N\subset G\subset B_N^{\# +}$. Then $T_{\tau_{(N-1)}}G^{\rm irr}_\pm T_{\tau_{(N-1)}}^{-1}=G^{\rm irr}_\mp$, i.e.\ $G_+^{\rm irr}$ and $G_-^{\rm irr}$ are similar.
\end{prop}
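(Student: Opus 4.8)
The plan is to reduce the whole statement to a single explicit matrix identity relating $V_{(N,+)}$ and $V_{(N,-)}$ via $T_{\tau_{(N-1)}}$, and then to push the conjugation through the definition $G^{\rm irr}_\pm=V_{(N,\pm)}GV_{(N,\pm)}^*$ from Definition~\ref{D.Girr}. First I would record the two facts that make everything formal afterwards: that $T_{\tau_{(N-1)}}$ is the $(N-1)\times(N-1)$ matrix with entries $[T_{\tau_{(N-1)}}]_{ij}=\delta_{ij}-\frac{2}{N-1}$, and that by Remark~\ref{R.tau} it is a self-adjoint unitary, so $T_{\tau_{(N-1)}}^{-1}=T_{\tau_{(N-1)}}=T_{\tau_{(N-1)}}^*$.

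The crucial step, and the only place where actual computation happens, is to verify
$$T_{\tau_{(N-1)}}V_{(N,+)}=V_{(N,-)}.$$
This is a direct calculation from the explicit entries in Definition~\ref{D.U}. For $i\le N-1$ one has $[T_{\tau_{(N-1)}}V_{(N,+)}]_{ij}=[V_{(N,+)}]_{ij}-\frac{2}{N-1}\sum_{k=1}^{N-1}[V_{(N,+)}]_{kj}$, and the column sums of $V_{(N,+)}$ come out to $-\frac{1}{\sqrt N}$ for $j\le N-1$ and $\frac{N-1}{\sqrt N}$ for $j=N$; substituting these flips the sign $+\frac{1}{\sqrt N}$ into $-\frac{1}{\sqrt N}$ in exactly the right places, yielding $V_{(N,-)}$. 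Since $T_{\tau_{(N-1)}}^2=1$, the identity read backwards gives $T_{\tau_{(N-1)}}V_{(N,-)}=V_{(N,+)}$, so the roles of the two signs are symmetric and both cases $\pm\to\mp$ follow at once.

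With this identity in hand the conclusion is bookkeeping. Writing $v_\pm=V_{(N,\pm)}uV_{(N,\pm)}^*$ for the fundamental representation of $G^{\rm irr}_\pm$ and using $T_{\tau_{(N-1)}}^*=T_{\tau_{(N-1)}}$, I would compute
$$v_-=V_{(N,-)}uV_{(N,-)}^*=T_{\tau_{(N-1)}}V_{(N,+)}uV_{(N,+)}^*T_{\tau_{(N-1)}}^{-1}=T_{\tau_{(N-1)}}v_+T_{\tau_{(N-1)}}^{-1}.$$
The one point I would not want to skip is upgrading this matrix conjugation to a genuine similarity of compact matrix quantum groups in the sense of Subsection~\ref{secc.qgdef}: since $T_{\tau_{(N-1)}}$ has scalar entries and is invertible, each entry of $v_-$ is a linear combination of entries of $v_+$ and conversely, so the generated C*-subalgebras of $C(G)$ satisfy $A_+=A_-$. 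The identity map on this common algebra then sends $[v_-]_{ij}$ to $[T_{\tau_{(N-1)}}v_+T_{\tau_{(N-1)}}^{-1}]_{ij}$, which is exactly the required $*$-isomorphism establishing $T_{\tau_{(N-1)}}G^{\rm irr}_+T_{\tau_{(N-1)}}^{-1}=G^{\rm irr}_-$. I do not expect any real obstacle here; the entire difficulty is concentrated in the elementary but slightly fiddly entrywise verification of the boxed identity.
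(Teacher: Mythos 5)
Your proposal is correct and follows exactly the paper's own route: the paper's entire proof is the observation that $T_{\tau_{(N-1)}}V_{(N,\pm)}=V_{(N,\mp)}$, which is precisely your key identity (and your column-sum computation verifying it is right). The rest of your write-up simply makes explicit the conjugation bookkeeping and the equality of the generated C*-subalgebras that the paper leaves implicit.
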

\begin{proof}
It is straightforward to check that $T_{\tau_{(N-1)}}V_{(N,\pm)}=V_{(N,\mp)}$.
\end{proof}

\bibliographystyle{alpha}
\bibliography{mybase}

\end{document}